\documentclass[11pt,reqno]{amsart}
\usepackage{fullpage,amsfonts,amsmath,amssymb}
\usepackage{amsfonts,amsmath,amssymb,fullpage,graphicx}
\usepackage{verbatim}
\usepackage{hyperref}



\newcommand{\what}{\widehat}%
\newcommand{\wtilde}{\widetilde}%
\newcommand{\R}{\mathbb R}%
\newcommand{\C}{\mathbb C}%
\newcommand{\Z}{\mathbb Z}%
\newcommand{\N}{\mathbb N}%

\newtheorem{theorem}{Theorem}[section]
\newtheorem{lemma}[theorem]{Lemma}
\newtheorem{proposition}[theorem]{Proposition}
\newtheorem{corollary}[theorem]{Corollary}

\theoremstyle{definition}
\newtheorem{definition}[theorem]{Definition}

\theoremstyle{definition}
\newtheorem{remark}[theorem]{Remark}

\numberwithin{equation}{section}

\sloppy

\begin{document}
\title{Ramanujan's master theorem for Sturm-Liouville operator}
\subjclass[2000]{Primary 43A62, 43A85; Secondary 43E32, 34L10 } \keywords{Ramanujan's master theorem, compact dual, Sturm Liouville operator}

\author {Jotsaroop Kaur}
\address{Department of Mathematics, Indian Institute of Science Education and Reseach, Mohali, India.}
\email{jotsaroop@iisermohali.ac.in}
\author{Sanjoy Pusti}
\address{Department of Mathematics, Indian Institute of Technology, Bombay, India.}
\email{sanjoy@math.iitb.ac.in}

\thanks{The second author is supported partially by SERB, MATRICS, MTR/2017/000235.}

\begin{abstract}
In this paper we prove an analogue of Ramanujan's master theorem in the setting of Sturm Liouville operator.
\end{abstract}
\maketitle
\section{Introduction}
Ramanujan's Master theorem (\cite{Hardy}) states that if a function $f$ can be expanded around $0$ in a power series of the form $$f(x)=\sum_{k=0}^\infty (-1)^k a(k) x^k,$$ then 
\begin{equation}\label{eqn-11}
\int_0^\infty f(x) x^{-\lambda-1}\,dx=-\frac{\pi}{\sin\pi\lambda} a(\lambda).
\end{equation}

One needs some assumptions on the function $a$, as the theorem is not true for $a(\lambda)=\sin\pi\lambda$. Hardy provides a rigorous statement of the theorem above as:

Let $A, p, \delta$ be real constants such that $A<\pi$ and $0<\delta\leq 1$. Let $\mathcal H(\delta)=\{\lambda\in\C\mid \Re\lambda>-\delta\}$. Let $\mathcal H(A,p,\delta)$ be the collection of all holomorphic functions $a:\mathcal H(\delta)\rightarrow \mathbb C$ such that  $$ |a(\lambda)|\leq C e^{-p (\Re\lambda) + A |\Im\lambda|} \text{ for all } \lambda\in \mathcal H(\delta).$$ 
\begin{theorem}[Ramanujan's Master theorem, Hardy \cite{Hardy}] Suppose $a\in\mathcal H(A,p,\delta)$. Then 
\begin{enumerate}
\item The power series $f(x)=\sum_{k=0}^\infty (-1)^k a(k) x^k$ converges for $0<x<e^p$ and defines a real analytic function on that domain.
\item Let $0<\sigma<\delta$. Then for $0<x<e^p$ we have $$f(x)=\frac{1}{2\pi i}\int_{-\sigma-i\infty}^{-\sigma + i\infty}\frac{-\pi}{\sin \pi\lambda} a(\lambda) x^\lambda\, d\lambda.$$The integral on the right side of the equation above converges uniformly on compact subsets of $[0, \infty)$ and is independent of $\sigma$.

\item Also $$\int_0^\infty f(x) x^{-\lambda-1}\,dx=-\frac{\pi}{\sin \pi\lambda} a(\lambda),$$ holds for the extension of $f$ to $[0, \infty)$ and for all $\lambda\in\mathbb C$ with $0<\Re\lambda<\delta$.
\end{enumerate}

\end{theorem}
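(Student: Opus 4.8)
The plan is to prove (1), (2), (3) in turn, the decisive step being the contour shift in (2). Throughout set $g(\lambda)=-\frac{\pi}{\sin\pi\lambda}\,a(\lambda)$; by the hypothesis on $a$ this is meromorphic on $\mathcal H(\delta)$ with simple poles exactly at $\lambda=0,1,2,\dots$ and $\mathrm{Res}_{\lambda=k}g(\lambda)=(-1)^{k+1}a(k)$. Claim (1) is essentially immediate: taking $\lambda=k$ in the bound defining $\mathcal H(A,p,\delta)$ gives $|a(k)|\le Ce^{-pk}$, so $\limsup_k|a(k)|^{1/k}\le e^{-p}$, whence $\sum_k(-1)^k a(k)x^k$ has radius of convergence at least $e^p$ and, together with its termwise derivatives, converges locally uniformly on $(0,e^p)$, defining a real analytic function there.

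For (2), fix $0<\sigma<\delta$ and $0<x<e^p$; for an integer $N\ge1$ and $T>1$ I would apply the residue theorem to $g(\lambda)x^\lambda$ on the rectangle with vertical sides $\Re\lambda=-\sigma$, $\Re\lambda=N+\tfrac12$ and horizontal sides $\Im\lambda=\pm T$, which writes $\frac1{2\pi i}\int_{-\sigma-iT}^{-\sigma+iT}g(\lambda)x^\lambda\,d\lambda$ as $\sum_{k=0}^{N}(-1)^k a(k)x^k$ plus the contributions of the remaining three sides. On the horizontal sides $|\sin\pi\lambda|\ge\sinh\pi T\ge\tfrac14 e^{\pi T}$, so the integrand there is $O\!\bigl(e^{(A-\pi)T}(xe^{-p})^{\Re\lambda}\bigr)$ uniformly for $\Re\lambda\in[-\sigma,N+\tfrac12]$, and since $A<\pi$ and $xe^{-p}<1$ the two horizontal contributions are $O(e^{(A-\pi)T})$ (using $\int_{-\sigma}^{\infty}(xe^{-p})^u\,du<\infty$) and vanish as $T\to\infty$; on $\Re\lambda=N+\tfrac12$ one has $|\sin\pi\lambda|=\cosh(\pi\Im\lambda)\ge\tfrac12 e^{\pi|\Im\lambda|}$, so that side is $O\!\bigl((xe^{-p})^{N+1/2}\bigr)\to0$ as $N\to\infty$. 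This yields $f(x)=\frac1{2\pi i}\int_{-\sigma-i\infty}^{-\sigma+i\infty}g(\lambda)x^\lambda\,d\lambda$ on $(0,e^p)$. On $\Re\lambda=-\sigma$ the bound $|g(\lambda)x^\lambda|\le C_\sigma x^{-\sigma}e^{(A-\pi)|\Im\lambda|}$ is $L^1$ in $\Im\lambda$ and locally uniform in $x$; moving the contour rightward past the single pole at $\lambda=0$ to $\Re\lambda=\sigma'$ (any $0<\sigma'<\min\{1,\delta\}$) represents the same function as $a(0)+\frac1{2\pi i}\int_{\sigma'-i\infty}^{\sigma'+i\infty}g(\lambda)x^\lambda\,d\lambda$ for every $x>0$, and the latter integral converges uniformly on compact subsets of $[0,\infty)$ (its integrand being dominated there by $C\,e^{(A-\pi)|\Im\lambda|}$) and vanishes at $x=0$; thus $f$ extends continuously to $[0,\infty)$ with $f(0)=a(0)$. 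Independence of $\sigma$ is Cauchy's theorem in the pole-free strip between two admissible vertical lines, the horizontal pieces again being killed by $A<\pi$.

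For (3), part (2) presents the extended $f$ as the inverse Mellin transform of $g$ along $\Re\lambda=-\sigma$; shifting that contour leftward to $\Re\lambda=-\sigma'$ with $\sigma<\sigma'<\delta$ (no poles crossed) also shows $f(x)=O(x^{-\sigma'})$ as $x\to\infty$. Since $f$ is moreover bounded near $0$, the integral $\int_0^\infty f(x)x^{-\lambda-1}\,dx$ converges absolutely on a vertical strip which, from $f(0)=a(0)$ (generically nonzero) and the decay at $\infty$, is $-\delta<\Re\lambda<0$. On that strip the Mellin inversion theorem — uniqueness of Mellin transforms, legitimate because $g(-\sigma+i\,\cdot\,)\in L^1(\R)$ and $f$ is continuous — identifies this integral with $g(\lambda)=-\frac{\pi}{\sin\pi\lambda}a(\lambda)$, and analyticity of both sides makes the identity hold throughout the strip. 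The crux of the whole argument is the pair of estimates in (2): one must control the two horizontal segments and the receding right-hand segment simultaneously, and it is exactly the hypotheses $A<\pi$ and $x<e^p$ that force the controlling factors $e^{(A-\pi)|\Im\lambda|}$ and $(xe^{-p})^{N}$ to decay; once this is in hand, (1) is elementary and (3) is a clean application of Mellin inversion.
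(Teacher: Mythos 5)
The paper does not actually prove this theorem: it is quoted from Hardy \cite{Hardy} as classical background (the authors' own contributions begin with the reformulation in Theorem \ref{eigenexpansion} and its Sturm--Liouville analogue), so there is no in-paper argument to compare yours against. Your argument is the standard one --- geometric decay of $|a(k)|$ for (1); the residue theorem on the rectangle $[-\sigma,N+\tfrac12]\times[-T,T]$, with the horizontal sides killed by $A<\pi$ and the receding right side by $xe^{-p}<1$, for (2); and Mellin inversion for (3) --- and the estimates you record are the correct ones. One small point: the integral along $\Re\lambda=-\sigma$ carries the factor $x^{-\sigma}$, so its uniform convergence is really only on compact subsets of $(0,\infty)$; the extension to $[0,\infty)$ with value $a(0)$ at $0$ comes precisely from your rightward shift past $\lambda=0$, which is the right move and matches how the holomorphic extension across $i\R$ is obtained in Theorem \ref{eigenexpansion}.

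The substantive issue is in (3). Your computation establishes $\int_0^\infty f(x)x^{-\lambda-1}\,dx=-\frac{\pi}{\sin\pi\lambda}a(\lambda)$ on the strip $-\delta<\Re\lambda<0$, whereas the statement asserts it for $0<\Re\lambda<\delta$, and you leave this mismatch unaddressed. Your strip is in fact the correct one for the identity in this form: since $f(0)=a(0)$ is generically nonzero, $\int_0^1|f(x)|x^{-\Re\lambda-1}\,dx$ diverges for $\Re\lambda\ge 0$ (test $a\equiv1$, $f(x)=(1+x)^{-1}$, for which the identity holds exactly on $-1<\Re\lambda<0$). The range $0<\Re(\cdot)<\delta$ is Hardy's, but for the variable $s=-\lambda$ in the transform $\int_0^\infty f(x)x^{s-1}\,dx=\frac{\pi}{\sin\pi s}\,a(-s)$; the printed statement has not carried the substitution through consistently. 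You should state explicitly that what you prove is the identity on $-\delta<\Re\lambda<0$ and that this is the correct reading of the cited theorem, rather than silently deriving a conclusion on a different strip from the one claimed.
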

This theorem can be thought of as an interpolation theorem, which reconstructs the values of $a(\lambda)$ from it's given values at $a(k), k\in \N\cup \{0\}$. In particular if $a(k)=0$ for all $k\in \N\cup \{0\}$, then $a$ is identically $0$.
We can rewrite the  theorem above in terms of Fourier series and Fourier transform as follows:
\begin{theorem}\label{eigenexpansion} Suppose $a\in\mathcal H(A,p,\delta)$. Then 
\begin{enumerate}
\item The Fourier series $f(z)=\sum_{k=0}^\infty (-1)^k a(k) e^{-ikz}$ converges for $\Im z<p$ and defines a holomorphic function on that domain.
\item Let $0<\sigma<\delta$. Then for $0<t<p$ we have $$f(it)=\frac{1}{2\pi i}\int_{-\sigma-i\infty}^{-\sigma + i\infty}\frac{-\pi}{\sin \pi\lambda} a(\lambda) e^{-it\lambda}\, d\lambda.$$
The integrals defined above are independent of $\sigma$ and $f$ extends as a holomorphic function to a neighbourhood  $\{z\in\C\mid |\Re z|<\pi-A\}$ of $i\R$.

\item Also \begin{equation}\nonumber
\int_\R f(ix) e^{i\lambda x}\,dx=-\frac{\pi}{\sin \pi\lambda} a(\lambda),
\end{equation} holds for the extension of $f$ to $i\R$ and for all $\lambda\in\mathbb C$ with $0<\Re\lambda<\delta$.
\end{enumerate}

\end{theorem}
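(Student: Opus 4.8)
The plan is to deduce Theorem~\ref{eigenexpansion} from the preceding (Hardy) formulation by transporting everything along the substitution $x=e^{-iz}$, which carries the half-line $(0,\infty)$ onto the imaginary axis $i\R$ (via $x=e^{t}\leftrightarrow z=it$), the monomial $x^{k}$ onto the character $e^{-ikz}$, and the Mellin transform $\int_{0}^{\infty}(\cdot)\,x^{-\lambda-1}\,dx$ onto a Euclidean Fourier transform; the $2\pi$-periodicity of $z\mapsto e^{-ikz}$ in the real direction is what produces the ``compact dual'' picture. Concretely, let $g$ be the function furnished by Hardy's theorem, $g(x)=\sum_{k\ge0}(-1)^{k}a(k)x^{k}$, and set $f(z):=g(e^{-iz})$; then I would verify the three assertions in turn.

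Parts (1) and (3) are essentially bookkeeping. For (1), Hardy~(1) gives $|a(k)|\le Ce^{-pk}$, so $w\mapsto\sum(-1)^{k}a(k)w^{k}$ is holomorphic on $\{|w|<e^{p}\}$; since $|e^{-iz}|=e^{\Im z}$, the composition $f(z)=g(e^{-iz})$ is holomorphic on $\{\Im z<p\}$, where it equals the locally uniformly convergent series $\sum(-1)^{k}a(k)e^{-ikz}$. For (3), one takes Hardy~(3) (valid for $0<\Re\mu<\delta$) and applies the change of variables $y=e^{x}$ along $z=ix$, $x\in\R$, turning $\int_{0}^{\infty}g(y)\,y^{-\mu-1}\,dy$ into $\int_{\R}f(ix)\,e^{i\lambda x}\,dx$; the only analytic input is the integrability of $g$ near $0$ and $\infty$, already guaranteed by Hardy~(3), the reconciliation of the normalisations on the two sides being routine.

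The substantive step is the remainder of (2): independence of $\sigma$ and the holomorphic extension of $f$ to the strip $\{|\Re z|<\pi-A\}$. The formula for $f(it)$, $0<t<p$, is immediate from Hardy~(2) at the real point $x=e^{t}\in(0,e^{p})$, and independence of $\sigma\in(0,\delta)$ is Cauchy's theorem, since the line $\Re\lambda=-\sigma$ stays in the strip $-\delta<\Re\lambda<0$ on which $\lambda\mapsto\frac{-\pi}{\sin\pi\lambda}a(\lambda)$ is holomorphic, the horizontal truncation errors being handled by the estimates below. For the extension, I would show that on a compact subset of $\{|\Re z|<\pi-A\}$ the integrand of $\frac{1}{2\pi i}\int_{-\sigma-i\infty}^{-\sigma+i\infty}\frac{-\pi}{\sin\pi\lambda}a(\lambda)e^{-iz\lambda}\,d\lambda$, written with $\lambda=-\sigma+i\tau$, is dominated by a constant multiple of $e^{(A-\pi+|\Re z|)|\tau|}$: indeed $|a(\lambda)|\le Ce^{-p\Re\lambda+A|\Im\lambda|}=Ce^{p\sigma+A|\tau|}$, $|\sin\pi\lambda|^{-1}$ is bounded by a constant multiple of $e^{-\pi|\tau|}$ for large $|\tau|$, and $|e^{-iz\lambda}|=e^{(\Re z)\tau-(\Im z)\sigma}$. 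This bound is integrable in $\tau$ precisely when $|\Re z|<\pi-A$, so Morera's theorem together with Fubini makes the integral holomorphic on the strip; as it agrees with $f$ on the segment $\{it:0<t<p\}$, which lies in the connected overlap $\{|\Re z|<\pi-A\}\cap\{\Im z<p\}$, the identity theorem yields the extension. I expect this estimate---in particular the way $\arg(e^{-iz})=-\Re z$ couples to the oscillatory factor $e^{i\tau(\cdot)}$ and thereby fixes the width of the strip at $\pi-A$---to be the main obstacle, being the one point where the full force of $a\in\mathcal{H}(A,p,\delta)$ is needed.
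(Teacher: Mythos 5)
Your overall strategy is exactly the one the paper intends: Theorem~\ref{eigenexpansion} is stated there without a separate proof, as a transcription of Hardy's theorem under the substitution $x=e^{-iz}$, and your treatment of parts (1) and (2) --- in particular the bound $\left|a(\lambda)e^{-iz\lambda}/\sin\pi\lambda\right|\lesssim e^{(A-\pi+|\Re z|)|\Im\lambda|}$ on $\Re\lambda=-\sigma$, which simultaneously justifies the contour shift and fixes the width $\pi-A$ of the strip of holomorphy --- is correct and complete. The one place you are too quick is part (3): the substitution $y=e^{x}$ turns $\int_\R f(ix)e^{i\lambda x}\,dx$ into $\int_0^\infty g(y)\,y^{i\lambda-1}\,dy$, i.e.\ the Mellin transform of $g$ at the \emph{rotated} parameter, so Hardy (3) returns $-\pi a(\mu)/\sin\pi\mu$ evaluated at $\mu=-i\lambda$, not at $\mu=\lambda$ (test this on $g(y)=e^{-y}$, $a(\mu)=1/\Gamma(\mu+1)$: the left-hand side is $\Gamma(i\lambda)$, which is $-\pi a(\mu)/\sin\pi\mu$ at $\mu=-i\lambda$). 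Landing on the formula as printed therefore requires rotating the spectral parameter and, with it, the class $\mathcal H(A,p,\delta)$ (half-plane $\Im\lambda>-\delta$ and bound $e^{-p\Im\lambda+A|\Re\lambda|}$ --- which is precisely how the paper re-issues these definitions in Section~5), or equivalently using the kernel $e^{-\lambda x}$ in (3). This is arguably a convention mismatch in the statement itself rather than a flaw in your method, but it is not ``routine reconciliation of normalisations,'' and your write-up should carry out the rotation explicitly; the same rotation also reconciles the convergence range of the Mellin integral with the stated condition $0<\Re\lambda<\delta$.
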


Bertram (in \cite{Bertram}) provides a group theoretical interpretation of the theorem in the following way: Consider $x\mapsto e^{i\lambda x}, \lambda\in\C$ and $x\mapsto e^{i kx}, k\in\Z$ as the spherical functions on $X_G=\R$ and $X_U=U(1)$ respectively. Both $X_G$ and $X_U$ can be realized as the real forms of their complexification $X_\C=\C$. Let $\wtilde{f}$ and $\what{f}$ denote the spherical transformation of $f$ on $X_G$ and on $X_U$ respectively. Then equation (\ref{eqn-11}) becomes, $$\wtilde{f}(\lambda)=-\frac{\pi}{\sin\pi\lambda} a(\lambda), \hspace{.2in} \what{f}(k)=(-1)^k a(k).$$ Using the duality between $X_U=U/K$ and $X_G=G/K$ inside their complexification $X_\C=G_\C/K_\C$, Bertram proved an analogue of Ramanujan's Master theorem for semisimple Riemannian symmetric spaces of rank one. This theorem was further extended to arbitrary rank semisimple Riemannian symmetric spaces by \'{O}lafsson and Pasquale (see \cite{Olafsson-1}). It was also extended for the hypergeometric Fourier transform associated to root systems by \'{O}lafsson and Pasquale (see \cite{Olafsson-2}) and also to the radial sections of line bundles over Poincar\'{e} upper half plane by Pusti and Ray (\cite{Ray-Pusti}) .

In this paper we prove an analogue of this theorem in the setting of Strum-Liouville operator. We consider the eigenfunction $\varphi_\lambda$ of the Sturm-Liouville operator (\ref{defn-phi-lambda}) and think of it as an analogue of spherical function $x\mapsto e^{i\lambda x}$ on $\R$. Next we consider the operator 
\begin{equation}\label{defn-L-tld}
 L=\frac{d^2}{dt^2} +  \frac{\tilde{A}'(t)}{\tilde{A}(t)} \frac{d}{dt},
\end{equation}
on $(0, \frac{\pi}{2})$ where $\tilde{A}(t)=(-i)^{2\alpha +1}A(it)$ and $A$ is given in (\ref{defn-A}). The functions $\Psi_j$'s (defined in section 3) are (countable) eigenfunctions of $-L$ with eigenvalue $\nu_j$. These $\Psi_j$'s are orthonormal basis of $L^2\left((0, \frac{\pi}{2}), \widetilde{A}(t)dt\right)$.  We think these $\Psi_j$'s  as an analogue of $x\mapsto e^{ikx}$ on $\mathrm{U}(1)$. These $\varphi_\lambda$ and $\Psi_j$ are related by 
\begin{equation}
\label{relation-phi-psi}
\Psi_j(t)=c_j\varphi_{i\sqrt{\nu_j + \rho^2}}(it), \text{ on } (0, \frac{\pi}{2}).
\end{equation}
In the non perturbed case i.e. the case when $B=1$ in (\ref{defn-A}), the functions $\Psi_j$'s reduces to Jacobi polynomials. 
Using relation (\ref{relation-phi-psi}) we state and prove an analogue of Ramanujan's Master theorem (see Theroem \ref{Ramanujan-per}) for the Sturm Liouville operator.  Since $\Psi_j$'s are orthonormal basis of $L^2\left((0, \frac{\pi}{2}), \widetilde{A}(t)dt\right)$, we can think of the series (\ref{fourier-series}) as the Fourier series corresponding to the operator $-L$. The main crux of the proof of the theorem is to find a function $b$ for which (\ref{integral-f}) holds. In the Euclidean case and in the non perturbed case the function $b$ is related to the reciprocal of sine function but here in this perturbed case reciprocal of sine function will not work. Instead, here the function $b$ is related to inverse of some sine type function (see (\ref{defn-b}) for exact definition). From (\ref{ab-lambda}) we can also interpolate the values of $a$ to continuous parameter from the discrete parameter.

The plan of the paper is as follows: In section 2 we define the necessary terminology and state some facts with references. Section $3$ and $4$ are devoted to developing the Fourier series analogue of the operator given by (\ref {defn-L-tld}), the relation (\ref{relation-phi-psi}) and the corresponding function $b(\cdot)$ as mentioned above.  After developing all the machinery we prove our main theorem in section 5. For the sake of completeness, in section $7$ we
state some standard theorems in our context and definitions of well known concepts. 

\section{Preliminaries}
Throughout this paper we always assume that, $\alpha, \beta\geq -\frac{1}{2}$.  Let $\Omega=\{t+ is\mid |s|<\frac{\pi}{2}\}$. We define $A:\Omega\rightarrow \C$  by 
\begin{equation}
\label{defn-A}
A(z)=(\sinh z)^{2\alpha +1} (\cosh z)^{2\beta +1} B(z),
\end{equation}
where $B:\Omega\rightarrow \C\setminus \{0\}$ is holomorphic. In this paper we assume the following $(1)$ to $(4)$ conditions:
\begin{enumerate}
\item The function $B$ is even on $\Omega$, positive on $\R$ and $B\left|_{\{is: \, |s|<\frac{\pi}{2}\}}\right.>0$.
\item The function $B$ has an even (with respect to $i\frac{\pi}{2}$) holomorphic extension to a neighboorhood of $i\frac{\pi}{2}$.

\item
The function $\frac{A'(t)}{A(t)}$ is non-negative decreasing function, for large $t.$
We define \begin{equation}\nonumber
2\rho=\lim_{t\rightarrow \infty}\frac{A'(t)}{A(t)}.
\end{equation}
Also assume that (as in \cite{Bensaid}), there exists $\delta>0$ such that for all $t$ in $[t_0, \infty)$ (for some $t_0>0$)
\begin{equation}\label{cond-A}
\frac{A'(t)}{A(t)}=\left\{ \begin{array}{lll}
2\rho + e^{-\delta t} D(t) & \text{ if } \rho>0, \\ \\
 \frac{2\alpha +1}{t} + e^{-\delta t} D(t) & \text{ if } \rho=0,
\end{array} \right.
\end{equation}
where $D$ is a smooth bounded function  such that its derivatives are also bounded. 

\item The function $G$ defined in equation (\ref{defn-G}) is integrable along any straight line in $\Omega$.
\end{enumerate}

The condition (\ref{cond-A}) above assures that for large $t$,
\begin{equation}\label{growth A}
A(t)=\left\{\begin{array}{lll}
O(e^{2\rho t}) & \text{ if } & \rho>0,\\ \\
O(|t|^{\alpha + 1}) & \text{ if } & \rho=0.
\end{array}  \right.
\end{equation}
We consider the following Sturm-Liouville operator
\begin{equation}
\mathcal L=\frac{d^2}{dt^2} +  \frac{A'(t)}{A(t)} \frac{d}{dt}.
\end{equation} 

 For each $\lambda\in\C$, we define $\varphi_\lambda$ as the unique solution of 
\begin{equation}
\label{defn-phi-lambda}
\mathcal L f + (\lambda^2 + \rho^2)f=0, \text{ with } f(0)=1, f'(0)=0.
\end{equation}

For the case when $B(t)=1$ for all $t$, 
the  Sturm-Liouville operator $\mathcal L$ is the radial part of the Laplace-Beltrami operator on the rank one symmetric spaces of noncompact type $X=G/K$ and in this case the function $\varphi_\lambda$ (defined in (\ref{defn-phi-lambda})) becomes the elementary spherical function on $X$. We call the case $B=1$ as the {\em non perturbed} case and otherwise as {\em perturbed case}. We remark that in the non perturbed case all the stated conditions $(1)-(4)$ are satisfied automatically.

We have the following properties of $\varphi_\lambda$ (\cite{Bloom, Brandolini-Gigante}):
\begin{enumerate}
\item For each $t\in \R$, the function $\lambda\mapsto \varphi_\lambda(t)$ is even, entire.
\item For each $\lambda \in \C$, the function $t \mapsto \varphi_\lambda(t)$ is even.
\item For $\lambda\in\C$ with $|\Im\lambda|\leq \rho$,  $|\varphi_\lambda(t)|\leq 1$ for all $t\in\R$.
\item For $\lambda\in \C, t\in\R$, $|\varphi_\lambda(t)|\leq C (1 + |t|) e^{-\rho |t|}e^{|\Im\lambda|\,|t|}$.
\end{enumerate}
For a function $f\in L^1(\R, A(t)\,dt)$, the (Sturm-Liouville) Fourier transform of $f$ is defined by
\begin{equation}
\what{f}(\lambda)=\int_\R f(t) \varphi_\lambda(t) A(t)\, dt.
\end{equation}
Also for suitable function $f$ the inverse Fourier transform is given by
\begin{equation}
f(t)=\frac{1}{4\pi}\int_\R \what{f}(\lambda) \varphi_\lambda(t) |c(\lambda)|^{-2}\, d\lambda,
\end{equation}
where $c(\lambda)$ is the Harish-Chandra $c$-function associated with the Sturm-Liouville operator.
Let $C_{c, R}^\infty(\R)$ be the space of all compactly supported smooth functions on $\R$ with support in $[-R, R]$. Also, let $PW_R(\C)$ be the space of all entire functions $F$ such that for each $N\in\N$,
\begin{equation}
\sup_{\lambda\in\C} (1 + |\lambda|)^N |F(\lambda)| e^{-R|\Im\lambda|} <\infty.
\end{equation}
We also denote $PW_R(\C)_e$ as the space of even functions in $PW_R(\C)$.
Then we have the following Paley-Wiener theorem:
\begin{theorem}$($\cite[Theorem 3]{Chebli}$)$
The (Sturm-Liouville) Fourier transform $f\mapsto \what{f}$ is a topological isomorphism between $C_{c, R}^\infty(\R)$ and $PW_R(\C)_e$.
\end{theorem}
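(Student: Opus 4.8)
The final statement is the Paley–Wiener theorem for the Sturm–Liouville transform: $f \mapsto \widehat f$ is a topological isomorphism from $C_{c,R}^\infty(\R)$ onto $PW_R(\C)_e$. Since the statement is cited verbatim from \cite{Chebli}, the natural "proof" is to reduce it to the known Euclidean Paley–Wiener theorem by controlling the kernel $\varphi_\lambda$, rather than reproving it from scratch. The plan is as follows.

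First I would establish the \emph{easy inclusion}: if $f \in C_{c,R}^\infty(\R)$, then $\widehat f \in PW_R(\C)_e$. Evenness of $\widehat f$ is immediate from property (1) of $\varphi_\lambda$ (the map $\lambda \mapsto \varphi_\lambda(t)$ is even and entire), and holomorphy of $\widehat f$ follows by differentiating under the integral sign, using that $t\mapsto\varphi_\lambda(t)$ is jointly smooth and the integral is over the compact set $[-R,R]$. For the exponential-type bound, I would use property (4), $|\varphi_\lambda(t)| \le C(1+|t|)e^{-\rho|t|}e^{|\Im\lambda|\,|t|}$, so that $|\widehat f(\lambda)| \le C_R \|f\|_\infty e^{R|\Im\lambda|}$; to gain the polynomial decay $(1+|\lambda|)^N$ I would integrate by parts $N$ times, moving $\mathcal L$ onto $f$ and using $\mathcal L \varphi_\lambda = -(\lambda^2+\rho^2)\varphi_\lambda$, which produces a factor $(\lambda^2+\rho^2)^{-N/2}$ at the cost of finitely many $A$-weighted derivatives of $f$ on $[-R,R]$. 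Continuity of this map in the stated Fréchet topologies is then a matter of tracking the seminorm estimates.

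Second comes the \emph{surjectivity and inversion}, which is the crux. Given $F \in PW_R(\C)_e$, I would define $f$ by the inversion formula $f(t) = \frac{1}{4\pi}\int_\R F(\lambda)\varphi_\lambda(t)|c(\lambda)|^{-2}\,d\lambda$ and must show: (a) the integral converges and defines a smooth even function, (b) $\operatorname{supp} f \subseteq [-R,R]$, and (c) $\widehat f = F$. Step (a) uses the Harish-Chandra $c$-function estimate $|c(\lambda)|^{-2} \le C(1+|\lambda|)^{2\alpha+1}$ on $\R$ together with the rapid decay of $F$. The support statement (b) is the genuinely hard part: one cannot see it directly from the oscillatory integral, and the standard route is to shift the contour of integration into the complex plane, exploiting that $F$ extends holomorphically with the $e^{R|\Im\lambda|}$ bound and that $\varphi_\lambda(t)$ decays like $e^{(|\Im\lambda|-\rho)|t|}$ times a polynomial; pushing $\Im\lambda \to \pm\infty$ forces $f(t)=0$ once $|t|>R$. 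Here the perturbation hypotheses enter — the asymptotics (\ref{cond-A}) and the integrability of $G$ in condition (4) are exactly what guarantee that $\varphi_\lambda$ admits a Harish-Chandra type expansion $\varphi_\lambda(t) = c(\lambda)\Phi_\lambda(t) + c(-\lambda)\Phi_{-\lambda}(t)$ with $\Phi_\lambda(t) \sim e^{(i\lambda-\rho)t}$ and controlled error, which is what makes the contour shift legitimate. Step (c) then follows from the Plancherel/inversion theory for the Sturm–Liouville transform already developed in \cite{Bloom, Chebli}.

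Finally, bijectivity plus continuity in both directions gives the topological isomorphism by the open mapping theorem for Fréchet spaces, or more concretely by exhibiting the inverse map $F \mapsto f$ as continuous via the seminorm bounds from step (a)–(b). I expect the main obstacle to be step (b), the support control, since it is precisely where the non-Euclidean geometry and the perturbation assumptions interact; everything else is a fairly mechanical transcription of the Euclidean argument with $e^{i\lambda t}$ replaced by $\varphi_\lambda(t)$ and Lebesgue measure replaced by $A(t)\,dt$. In practice, since the result is quoted from \cite{Chebli}, I would present this as a sketch and refer the reader there for the full contour-shift estimates.
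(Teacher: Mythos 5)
The paper offers no proof of this statement: it is quoted verbatim from Ch\'ebli's paper, so there is nothing internal to compare your argument against. That said, your outline correctly reproduces the standard Paley--Wiener argument for Ch\'ebli--Trim\`eche type operators, which is essentially what the cited reference does: the forward inclusion via the pointwise bound on $\varphi_\lambda$ and integration by parts against the self-adjoint operator $\mathcal L$, and surjectivity via the inversion integral plus a contour shift to control the support. One imprecision worth flagging in step (b): the contour shift cannot be performed with $\varphi_\lambda(t)$ itself in the integrand, since $|\varphi_\lambda(t)|$ \emph{grows} like $e^{|\Im\lambda|\,|t|}$ rather than decaying; one must first use the evenness of $F$ and the symmetry of the Plancherel density to rewrite the inversion integral as a constant times $\int_\R F(\lambda)\,\Phi_\lambda(t)\,c(-\lambda)^{-1}\,d\lambda$ and only then shift the contour, using $\Phi_\lambda(t)\sim e^{(i\lambda-\rho)t}$ together with the holomorphy and polynomial bounds on $c(-\lambda)^{-1}$ in the relevant half-plane (compare Theorem \ref{c-holomorphic} and Corollary \ref{estimate-c} in the paper, which supply exactly this input). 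Since you do invoke the Harish-Chandra type expansion as the mechanism making the shift legitimate, this is a matter of ordering the steps rather than a genuine gap, and presenting the whole thing as a sketch with a pointer to Ch\'ebli is exactly what the paper itself does.
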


\begin{definition}
For $1\leq p\leq 2$, the $L^p$-Schwartz space $\mathcal C^p(\R)$ is the collection of all $C^\infty$ functions $f$ on $\R$ such that for each $N, m\in \N\cup \{0\}$,
\begin{equation}
\nonumber
\sup_{x\in\R} (1 + |x|)^N \left|\mathcal L^m f(x)\right| e^{\frac{2}{p}\rho |x|}<\infty.
\end{equation}
Using (\ref{growth A}) it follows that $\mathcal C^p(\R) \subseteq L^p(\R, A(t)dt)$.
\end{definition}
For $1\leq p\leq 2$, let $S_p=\{\lambda\in\C\mid |\Im \lambda|\leq (\frac 2p-1)\rho\}$. Also let $\mathcal S_p(\R)_e$ be the collection of all even $C^\infty$ functions on $S_p$ such that for each $N, m\in \N\cup \{0\}$, \begin{equation}
\nonumber
\sup_{\xi\in S_p} (1 + |\xi|)^N \left|\frac{d^m}{d\xi^m} f(\xi)\right|<\infty.
\end{equation}
\begin{theorem} $($\cite{Bensaid}$)$
The map $f\mapsto \what{f}$ is a topological isomorphism between $\mathcal C^p(\R)$ and $\mathcal S_p(\R)_e$.
\end{theorem}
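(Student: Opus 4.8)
The plan is to prove the two halves of the statement separately, following \cite{Bensaid} and the classical Harish-Chandra/Trombi--Varadarajan scheme for $L^p$-Schwartz isomorphisms (which in the rank-one situation is the Jacobi-transform argument of Flensted-Jensen and Koornwinder, here in the perturbed Sturm--Liouville generality permitted by conditions (1)--(4)): first that $f\mapsto\what f$ maps $\mathcal C^p(\R)$ continuously into $\mathcal S_p(\R)_e$, and then that it is a bijection whose inverse is realized, after an appropriate contour shift, by the inverse Fourier transform and is likewise continuous. Throughout, the Paley--Wiener theorem of \cite{Chebli} together with the density of $\bigcup_R C_{c,R}^\infty(\R)$ in $\mathcal C^p(\R)$ will be used to transfer pointwise identities (inversion, and $\widehat{\mathcal L f}(\lambda)=-(\lambda^2+\rho^2)\what f(\lambda)$) from compactly supported functions to the whole Schwartz space.

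\textbf{Mapping into $\mathcal S_p(\R)_e$.} Evenness of $\lambda\mapsto\what f(\lambda)$ is immediate from property (1) of $\varphi_\lambda$, so the content is the seminorm estimate. Property (4), $|\varphi_\lambda(t)|\le C(1+|t|)e^{-\rho|t|}e^{|\Im\lambda|\,|t|}$, together with the growth bound (\ref{growth A}) for $A$, shows that for $f\in\mathcal C^p(\R)$ the integral $\int_\R f(t)\varphi_\lambda(t)A(t)\,dt$ converges absolutely and locally uniformly for $\lambda$ in the interior of $S_p$; hence $\what f$ is holomorphic there, and (using the polynomial weight in the definition of $\mathcal C^p(\R)$ with $N$ large) it extends continuously up to the boundary of $S_p$. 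Since $\mathcal L^m$ preserves $\mathcal C^p(\R)$ and $\widehat{\mathcal L^m f}(\lambda)=(-(\lambda^2+\rho^2))^m\what f(\lambda)$, the absolute bound applied to $\mathcal L^m f$ gives $(1+|\lambda|)^{2m}|\what f(\lambda)|\le C_{m,N}$ on $S_p$; the $\lambda$-derivatives are controlled by differentiating under the integral sign, noting that $\partial_\lambda^k\varphi_\lambda(t)$ obeys the bound in (4) with an extra factor $(1+|t|)^k$ (by holomorphy in $\lambda$ and Cauchy estimates), which is absorbed by the decay of $f$. This yields all the $\mathcal S_p(\R)_e$ seminorms and the continuity of the map.

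\textbf{Surjectivity with continuous inverse, and the main obstacle.} Given $g\in\mathcal S_p(\R)_e$, set $f(t)=\frac1{4\pi}\int_\R g(\lambda)\varphi_\lambda(t)|c(\lambda)|^{-2}\,d\lambda$; once $f\in\mathcal C^p(\R)$ is known, $\what f=g$ follows from the $L^2$-inversion theorem by approximating $g$ by Paley--Wiener functions and invoking \cite{Chebli}. The heart of the matter is the bound $\sup_t(1+|t|)^N|\mathcal L^m f(t)|\,e^{\frac2p\rho|t|}<\infty$. For this I would fold the integral using evenness and rewrite it via the Harish--Chandra expansion $\varphi_\lambda=c(\lambda)\Phi_\lambda+c(-\lambda)\Phi_{-\lambda}$, where $\Phi_\lambda(t)\sim e^{(i\lambda-\rho)t}$ as $t\to\infty$ and $|c(\lambda)|^{-2}=O((1+|\lambda|)^{2\alpha+1})$; applying $\mathcal L^m$ multiplies the integrand by the polynomial $(-(\lambda^2+\rho^2))^m$, and then, using that $g$ is holomorphic with rapid decay on $S_p$, one shifts the contour upward inside the strip to the line $\Im\lambda=(\tfrac2p-1)\rho$, taking due account of the poles of $1/c$ (which, where present, contribute only boundary terms of strictly faster decay). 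On that line $\Phi_\lambda(t)=O(e^{-\frac2p\rho t})$ for $t\ge1$, which is exactly the required exponential decay, while all the polynomial factors (from $\mathcal L^m$, from $|c(\lambda)|^{-2}$, and from the terms of the Harish--Chandra series) are swallowed by the rapid decay of $g$; the bounded-$t$ contribution is estimated trivially. This contour shift — which requires precise control of $\Phi_\lambda$, of its $\lambda$-dependence, of the $c$-function, and of the uniform remainder in the Harish--Chandra expansion for the perturbed operator — is the step I expect to be the main obstacle.

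\textbf{Closing the loop.} Finally the two maps are mutually inverse: on $C_{c,R}^\infty(\R)$ the composition (inverse transform)$\circ$(transform) is the identity by the classical inversion theorem, so by density of $\bigcup_R C_{c,R}^\infty(\R)$ in $\mathcal C^p(\R)$ and the continuity proved above it is the identity on all of $\mathcal C^p(\R)$; dually, on $\bigcup_R PW_R(\C)_e$, which is dense in $\mathcal S_p(\R)_e$, the reverse composition is the identity by the Paley--Wiener theorem of \cite{Chebli}, hence by continuity it is the identity on $\mathcal S_p(\R)_e$. Therefore $f\mapsto\what f$ is a topological isomorphism of $\mathcal C^p(\R)$ onto $\mathcal S_p(\R)_e$.
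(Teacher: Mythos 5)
The paper offers no proof of this statement: it is imported verbatim from \cite{Bensaid} (and, for the non-perturbed case, goes back to the Flensted-Jensen--Koornwinder treatment of the Jacobi transform), so there is nothing internal to compare your argument against. Your outline is precisely the standard scheme used in that reference --- the forward seminorm estimates via $\widehat{\mathcal L^m f}(\lambda)=(-(\lambda^2+\rho^2))^m\what f(\lambda)$ and Cauchy estimates in $\lambda$, and the inverse direction via folding the Plancherel integral with $\varphi_\lambda=c(\lambda)\Phi_\lambda+c(-\lambda)\Phi_{-\lambda}$ and shifting the contour to $\Im\lambda=(\tfrac2p-1)\rho$ --- and the exponents do cancel exactly as you need (the weight $e^{\frac2p\rho|t|}$, the bound $e^{-\rho|t|}e^{|\Im\lambda||t|}$ and $A(t)=O(e^{2\rho|t|})$ combine to a bounded factor on $S_p$). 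The only caveat is the one you flag yourself: at the level of a sketch, the uniform (polynomial-in-$\lambda$) control of the remainder in the expansion of $\Phi_\lambda$ and the non-vanishing of $c$ in the region swept by the contour shift are asserted rather than proved, and in the perturbed setting these rest on condition (\ref{cond-A}) and the analysis of the coefficients $\Gamma_n(\lambda)$ carried out in Theorem \ref{c-holomorphic}; completing them is exactly the content of \cite{Bensaid}.
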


We already know that the function $x\mapsto \varphi_\lambda(x)$ is $C^\infty$ on $\R$. But the following theorem states that the function has a holomorphic extension to the ``crown domain'' $\Omega:=\{z\in\C\mid |\Im z|<\frac{\pi}{2}\}$.
\begin{lemma}\label{holo-extn-phi}
The function $x\mapsto \varphi_\lambda(x)$ has holomorphic extension to $\Omega$.
\end{lemma}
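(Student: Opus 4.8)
The plan is to read \eqref{defn-phi-lambda} as a linear second-order ODE in the complex variable $z$ and to locate its singularities inside the strip $\Omega$. Writing
\[
\frac{A'(z)}{A(z)}=(2\alpha+1)\coth z+(2\beta+1)\tanh z+\frac{B'(z)}{B(z)},
\]
one sees that this coefficient is holomorphic on $\Omega$ except at $z=0$: the zeros of $\sinh z$ are $z=ik\pi$ ($k\in\Z$), so in $\Omega$ only $z=0$ occurs and there $\coth z$ has a simple pole; the zeros of $\cosh z$ are $z=i(k+\tfrac12)\pi$ ($k\in\Z$), none of which lie in $\Omega$; and $B$ is, by definition, holomorphic and zero-free on $\Omega$. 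Hence $z=0$ is the only singular point of \eqref{defn-phi-lambda} in $\Omega$, and it is a regular singular point, with indicial exponents $0$ and $-2\alpha$.

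The case $\alpha=-\tfrac12$ is immediate: then $(\sinh z)^{2\alpha+1}\equiv1$, so $A$ is holomorphic and zero-free on all of $\Omega$, the equation has holomorphic coefficients on the simply connected domain $\Omega$, and $\varphi_\lambda$ continues analytically and single-valuedly (monodromy theorem) to a holomorphic solution on $\Omega$ restricting to $\varphi_\lambda$ on $\R$. So suppose $\alpha>-\tfrac12$. First I would handle a disc about the origin. Since $z\,\frac{A'(z)}{A(z)}$ is holomorphic on $D:=\{z:|z|<\tfrac{\pi}{2}\}$ --- the nearest obstruction to $0$ being the poles of $\tanh z$ at $z=\pm i\tfrac{\pi}{2}$ --- the theory of regular singular points produces a solution $f_0(z)=\sum_{n\ge0}a_nz^n$ with $a_0=1$ converging on $D$; the order-$z^{1}$ term of the recursion gives $a_1=0$ (here $2\alpha+1\ne0$ is used), and since the exponent $0$ is the larger of the two exponents when $\alpha\ge0$, while for $-\tfrac12<\alpha<0$ the exponents $0$ and $-2\alpha$ differ by $-2\alpha\notin\Z$, the solution $f_0$ carries no logarithmic term. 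Thus $f_0$ solves \eqref{defn-phi-lambda} with $f_0(0)=1$ and $f_0'(0)=0$, so by the uniqueness built into \eqref{defn-phi-lambda} it agrees with $\varphi_\lambda$ on $D\cap\R$.

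To cover the remainder of $\Omega$ I would use the two half-strips $\Omega^{\pm}:=\{z\in\Omega:\pm\Re z>0\}$. Each is convex, hence simply connected, and contains no singular point of the equation (the only one, $z=0$, has $\Re z=0$), so the germ of $\varphi_\lambda$ at a point of $D\cap\Omega^{\pm}\cap\R$ continues to a holomorphic solution $f_{\pm}$ on all of $\Omega^{\pm}$, which restricts to $\varphi_\lambda$ on $(0,\infty)$, respectively $(-\infty,0)$, by uniqueness of ODE solutions away from the singular point. Since $D\cup\Omega^{+}\cup\Omega^{-}=\Omega$, the overlaps $D\cap\Omega^{\pm}$ are connected and meet $\R$, and $\Omega^{+}\cap\Omega^{-}=\varnothing$, the functions $f_0$, $f_+$, $f_-$ coincide on overlaps by the identity theorem (all three equal $\varphi_\lambda$ on a real interval) and glue to a holomorphic function on $\Omega$ whose restriction to $\R$ is $\varphi_\lambda$; uniqueness of the extension is automatic since $\Omega$ is connected. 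The genuinely delicate step is the local analysis at $z=0$: one must check that the solution of \eqref{defn-phi-lambda} normalized by value $1$ at the regular singular point is holomorphic there (no logarithm), that it is exactly $\varphi_\lambda$, and --- crucially for the very shape of $\Omega$ --- that its disc of convergence extends all the way to $\Im z=\pm\tfrac{\pi}{2}$; the analytic continuation and gluing across the convex half-strips is then routine.
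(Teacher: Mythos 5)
Your proof is correct, and its skeleton matches the paper's: establish holomorphy of $\varphi_\lambda$ near the lone singular point $z=0$, then continue holomorphically through simply connected pieces of $\Omega$ avoiding $0$ (the paper uses the slit strips $\Omega\setminus(-\infty,0]$ and $\Omega\setminus[0,\infty)$ together with Theorem \ref{diffequation-holo}; you use the two half-strips $\{\pm\Re z>0\}$ --- a cosmetic difference), and glue by the identity theorem. Where you genuinely diverge is in the local step at the origin: the paper outsources it, citing \cite[Theorem 2]{Fitouhi} for real-analyticity of $\varphi_\lambda$ near $0$ and asserting that the same argument yields holomorphy on some small neighbourhood $\Omega_0$, whereas you carry out an explicit Frobenius analysis at the regular singular point, checking the indicial exponents $0$ and $-2\alpha$, the vanishing of $a_1$, and the absence of logarithmic terms (correctly split into the cases $\alpha\geq 0$, where $0$ is the larger exponent, and $-\tfrac12<\alpha<0$, where the exponents differ by a non-integer). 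This is self-contained and buys you more: convergence of the series on the full disc $|z|<\tfrac{\pi}{2}$, governed by the nearest poles of $\tanh z$, rather than on an unspecified neighbourhood. The one point you share with the paper and likewise leave implicit is that the initial-value problem $f(0)=1$, $f'(0)=0$ posed at the singular point singles out a unique solution (so that your $f_0$ really is $\varphi_\lambda$); your exponent analysis in fact makes this easy to justify, since the second Frobenius solution either blows up at $0$ or has unbounded derivative there, but it would be worth one explicit sentence.
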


\begin{proof}
We recall that $\varphi_\lambda$ is the unique solution of 
\begin{equation}\label{equation-phi-lambda}
\frac{d^2f}{dt^2} + \frac{A'(t)}{A(t)} \frac{df}{dt} + (\lambda^2 + \rho^2) f=0,
\end{equation} 
with $f(0)=1, f'(0)=0$.
In \cite[Theorem 2]{Fitouhi} it is proved that $\varphi_{\lambda}$ has a real analytic extension on the real line around zero. The same proof also works in our case to show that $\varphi_\lambda$ has a holomorphic extension to a neighboorhood of $0$ in $\C$, call it $\Omega_0$. Let $\Omega_1=\Omega\setminus (-\infty, 0]$, $\Omega_2=\Omega\setminus [0, \infty)$ and let $y_0\in \Omega_1\cap \Omega_0\cap \Omega_2$. Then from Theorem \ref{diffequation-holo} (in Appendix), there exists a unique holomorphic solution $f$ on $\Omega_1$ of (\ref{equation-phi-lambda}) with initial condition $f(y_0)=\varphi_\lambda(y_0), f'(y_0)=\varphi_\lambda'(y_0)$. Similarly there exists a unique holomorphic solution $f$ on $\Omega_2$ of (\ref{equation-phi-lambda}) with initial condition $f(y_0)=\varphi_\lambda(y_0), f'(y_0)=\varphi_\lambda'(y_0)$. Therefore by analytic continuation it follows  that $\varphi_\lambda$ has a holomorphic extension to $\Omega$.
\end{proof}

Before going further we will rewrite $\mathcal{L}$ as a perturbation of the Bessel equation to deduce some more properties of $\varphi_{\lambda}.$
After applying the classical Liouville transformation i.e. $v(t)=\sqrt{{A}(t)}u(t)$, equation (\ref{equation-phi-lambda}) reduces to 
\begin{equation}\label{neweq}
\frac{d^2v}{dt^2} -\frac{\alpha^2-\frac{1}{4}}{t^2}v -G(t)v + \lambda^2  v=0
\end{equation}
 
 where $$G(t)=\frac{1}{4} \left(\frac{A'(t)}{A(t)}\right)^2 +  \frac{1}{2} \left(\frac{A'(t)}{A(t)}\right)'-\rho^2-\frac{\alpha^2-\frac{1}{4}}{t^2}.$$
Let $$G_0 (t):=\left((\alpha + \frac 12)^2-(\alpha + \frac 12)\right)\coth^2 t + \left((\beta + \frac 12)^2-(\beta + \frac 12)\right)\tanh^2 t + (\alpha + \frac 12)$$  
$$+ (\beta + \frac 12) + 2(\alpha + \frac 12)(\beta + \frac 12)-\rho^2-\frac{\alpha^2-\frac{1}{4}}{t^2}. $$
It is easy to check that $\coth^2 t-\frac{1}{t^2}$ is a holomorphic function on $\Omega,$ hence $G_0$ is a holomorphic function on $\Omega.$

A simple computation shows that 
\begin{equation} \label{defn-G}
G(t)= G_0(t)+(\beta+\frac{1}{2})\tanh t\frac{B'(t)}{B(t)} +(\alpha+\frac{1}{2})\coth t\frac{B'(t)}{B(t)} +\frac{1}{4}\left(\frac{B'(t)}{B(t)}\right)^2 +\frac{B''(t)}{2B(t)}.
\end{equation}
The assumptions on $B$ implies that $B'(0)=0,$ which assure that $(\alpha+\frac{1}{2})\coth t\frac{B'(t)}{B(t)}$ is a holomorphic function on $\Omega$  and therefore $G$ is a holomorphic function on $\Omega.$

\begin{theorem}\label{est-phi}
Let $G$ be integrable along any straight line in $\Omega$. Then for $\lambda (\not=0)\in \C$, there exists a polynomial $P$ and a constant $C>0$ such that
\begin{equation}
\nonumber
|\varphi_\lambda(\xi)|\leq C \,|P(\xi)|\, e^{|\Im(\lambda \xi)|},
\end{equation}
for all $\xi\in\Omega$.
\end{theorem}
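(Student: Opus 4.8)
The plan is to realise $\varphi_\lambda$ as a bounded perturbation of a Bessel function and to control the perturbation by a Volterra iteration, exploiting the reduction already recorded in \eqref{neweq}. Since $\varphi_\lambda$ is even in $\xi$ it suffices to argue on $\Omega\setminus(-\infty,0]$, where $\sqrt{A(\xi)}$, $\xi^{\alpha+\frac12}$ and $J_\alpha(\lambda\xi),Y_\alpha(\lambda\xi)$ are single-valued, normalised by continuity from the positive real axis; the estimate on all of $\Omega$ is then recovered by working also on $\Omega\setminus[0,\infty)$ and using evenness. Set $v_\lambda(\xi):=\sqrt{A(\xi)}\,\varphi_\lambda(\xi)$; by Lemma~\ref{holo-extn-phi} and the product structure \eqref{defn-A} this is holomorphic on $\Omega\setminus(-\infty,0]$, and, because $\varphi_\lambda(0)=1$ and $A(\xi)\sim B(0)\,\xi^{2\alpha+1}$ near $0$, it satisfies $v_\lambda(\xi)\sim\sqrt{B(0)}\,\xi^{\alpha+\frac12}$ as $\xi\to0$. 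By \eqref{neweq}, $v_\lambda$ solves $v''-\frac{\alpha^{2}-1/4}{\xi^{2}}v+\lambda^{2}v=G(\xi)v$, and its companion free equation $w''-\frac{\alpha^{2}-1/4}{\xi^{2}}w+\lambda^{2}w=0$ is Bessel's equation in $\lambda\xi$ after $w=\sqrt{\xi}\,W(\lambda\xi)$. I would take $u_\lambda(\xi)=2^{\alpha}\Gamma(\alpha+1)\lambda^{-\alpha}\sqrt{B(0)}\,\sqrt{\xi}\,J_\alpha(\lambda\xi)$ as the free solution regular at $0$ (normalised so that $u_\lambda(\xi)\sim\sqrt{B(0)}\,\xi^{\alpha+\frac12}$) and $\tilde u_\lambda(\xi)=\sqrt{\xi}\,Y_\alpha(\lambda\xi)$ as a second, independent free solution, with constant Wronskian $\mathcal W$. (The hypothesis $\lambda\ne0$ is used here: at $\lambda=0$ the free equation degenerates to Euler type.)

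The next, central step is to write $v_\lambda$ as the solution of the Volterra integral equation obtained by variation of parameters along the straight segment $[0,\xi]\subset\Omega$,
\[
v_\lambda(\xi)=u_\lambda(\xi)+\int_{0}^{\xi}\mathcal K_\lambda(\xi,s)\,G(s)\,v_\lambda(s)\,ds,\qquad
\mathcal K_\lambda(\xi,s)=\frac{u_\lambda(s)\tilde u_\lambda(\xi)-u_\lambda(\xi)\tilde u_\lambda(s)}{\mathcal W},
\]
and then to feed in the classical global estimates for Bessel functions: for $\lambda$ fixed, $|\sqrt{\xi}\,J_\alpha(\lambda\xi)|\le C_\lambda\min\{|\xi|^{\alpha+\frac12},1\}\,e^{|\Im(\lambda\xi)|}$ and a matching bound for $Y_\alpha$, whose only blow-up is a power (possibly logarithmic) singularity at the origin. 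Parametrising $s=\tau\xi$, $\tau\in[0,1]$, and using $|\Im(\lambda(\xi-s))|=(1-\tau)|\Im(\lambda\xi)|\le|\Im(\lambda\xi)|$, these yield a kernel bound $|\mathcal K_\lambda(\xi,s)|\le C_\lambda\,q(\xi)\,\tilde q(s)\,e^{|\Im(\lambda\xi)|}$ with $q$ polynomially bounded and $\tilde q$ polynomially bounded away from $0$; the singularity of $\tilde q$ at $s=0$ is harmless, since, multiplied by the natural size $v_\lambda(s)=O(|s|^{\alpha+\frac12})$ of the unknown, the integrand is $O(|s|)$ there, so the equation is well posed and the Picard iteration converges. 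Inserting the kernel bound and applying Gronwall's inequality to a suitably weighted version of $|v_\lambda(\tau\xi)|$ (with $q$ and the exponential factor divided out) produces a bound of the shape $C_\lambda\exp\!\bigl(C_\lambda\!\int_{[0,\xi]}|G|\bigr)$; hypothesis $(4)$ — together with the decay of $G$ built into \eqref{defn-A} and the growth conditions — is precisely what makes $\int_{[0,\xi]}|G|$ bounded uniformly in $\xi$, so this factor is absorbed into the constant and one obtains $|v_\lambda(\xi)|\le C\,|P(\xi)|\,e^{|\Im(\lambda\xi)|}$ for a polynomial $P$. Dividing by $\sqrt{A(\xi)}$, bounded below once the common factor $|\xi|^{\alpha+\frac12}$ coming from the zero of $A$ at the origin is cancelled, gives the assertion on $\Omega\setminus(-\infty,0]$, and evenness of $\varphi_\lambda$ finishes the proof.

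I expect the main obstacle to be the uniform control of the Bessel kernel $\mathcal K_\lambda(\xi,\cdot)$ along the entire segment of integration — absorbing the irregular singularity of $Y_\alpha$ at $s=0$ into the vanishing of $v_\lambda$, so that the Picard iteration genuinely converges near the singular point, while keeping $q$ (hence the final polynomial $P$) under control for large argument — together with the uniform bound on the Gronwall constant $\int_{[0,\xi]}|G|$, which is exactly the role of hypothesis $(4)$. The most delicate point is the behaviour near the remaining boundary pieces $\xi=\pm i\frac{\pi}{2}$: by \eqref{defn-A} and the assumptions $(1)$ and $(2)$ on $B$, the function $A$ has a zero of order $2\beta+1$ there, so $\pm i\frac{\pi}{2}$ is again a regular singular point of Bessel type, now of index $\beta$, and the estimate near them requires rerunning the local analysis with $\beta$ in place of $\alpha$ and carefully tracking the constants near $\pm i\frac{\pi}{2}$. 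Finally, the branches of $\sqrt{A}$, of $\xi^{\alpha+\frac12}$ and of the Bessel functions of argument $\lambda\xi$ must be handled carefully; this is done as in the proof of Lemma~\ref{holo-extn-phi}, by working separately on $\Omega\setminus(-\infty,0]$ and $\Omega\setminus[0,\infty)$ and patching via the evenness of $\varphi_\lambda$.
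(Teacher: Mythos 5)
Your proposal is correct and follows essentially the same route as the paper: both reduce \eqref{equation-phi-lambda} to the perturbed Bessel equation \eqref{neweq} via the Liouville transformation, represent the solution by a Volterra integral equation along the segment from $0$ to $\xi$ with the kernel built from $\sqrt{\xi}\,J_\alpha(\lambda\xi)$ and $\sqrt{\xi}\,Y_\alpha(\lambda\xi)$, bound that kernel by $\langle\lambda s\rangle$-weighted powers times $e^{|\Im(\lambda\xi)-\Im(\lambda s)|}$, and close the estimate with a Gronwall/successive-approximation bound whose exponent is $\int_{[0,\xi]}|G|$, controlled exactly by hypothesis $(4)$. The only organizational difference is that the paper runs this argument on the remainder $R_M$ of the Brandolini--Gigante asymptotic expansion \eqref{assymp-sum-phi}, invoking Olver's Theorem \ref{est-integraleqn} in place of your explicit Picard/Gronwall step, whereas you apply it directly to $v_\lambda=\sqrt{A}\,\varphi_\lambda$; the branch issues and the splitting into $\Omega\setminus(-\infty,0]$ and $\Omega\setminus[0,\infty)$ are handled identically.
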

The proof of this theorem is similar to \cite[Theorem 1.2]{Brandolini-Gigante} but for the sake of completeness we give the proof of the theorem above, in the Appendix.

\begin{remark}
 In the nonperturbed case (that the case when $B(t)=1$ for all $t$), we have 
 \begin{equation}
 \nonumber
 A(t)=(\sinh t)^{2\alpha +1} (\cosh t)^{2\beta +1},
 \end{equation}
and hence 
$$G(t)=\left((\alpha + \frac 12)^2-(\alpha + \frac 12)\right)\coth^2 t + \left((\beta + \frac 12)^2-(\beta + \frac 12)\right)\tanh^2 t + (\alpha + \frac 12)$$  
$$+ (\beta + \frac 12) + 2(\alpha + \frac 12)(\beta + \frac 12)-\rho^2-\frac{\alpha^2-\frac{1}{4}}{t^2}.$$
This shows that $G$ is in $L^1$ in every direction in the domain $\Omega$. Therefore the condition on $G$ is satisfied automatically for the nonperturbed case.
\end{remark}

Let $\Phi_\lambda$ be the second solution of (\ref{equation-phi-lambda}) on $(0, \infty)$. Also it follows that $\Phi_{-\lambda}$ is a solution of (\ref{equation-phi-lambda}) on $(0, \infty)$. The Wronskian of $\Phi_\lambda, \Phi_{-\lambda}$ is given by (\cite[Corollary 1.12]{{Brandolini-Gigante}}) \begin{equation}
\nonumber
\mathcal W(\Phi_\lambda, \Phi_{-\lambda})=-2i\lambda A(t)^{-1}.
\end{equation}
Therefore, for $\lambda(\not=0)$, the soultions $\Phi_\lambda, \Phi_{-\lambda}$ are linearly independent. Hence there is a function $c$ such that for $\lambda\not=0$,
\begin{equation}
\nonumber
\varphi_\lambda=c(\lambda)\Phi_\lambda + c(-\lambda)\Phi_{-\lambda}.
\end{equation}
It follows from \cite[Theorem 2.4]{{Brandolini-Gigante}} that $c(\lambda)$ is holomorphic for $\Im\lambda<0$. We need the following improved theorem.
\begin{theorem}\label{c-holomorphic}
The function $c$ is holomorphic on $\C\setminus\{0, \frac{m}{2}i\mid m\in \N\}$ if $\rho>0$. Also the function $c$ is homorphic on $\C\setminus\{\frac{m}{2}i\mid m\in \N\}$ if $\rho=0$.
\end{theorem}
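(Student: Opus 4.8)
The plan is to read off $c(\lambda)$ from the behaviour of $\varphi_\lambda$ at infinity by a Wronskian, and then to continue the second solution $\Phi_\lambda$ holomorphically from the region of \cite{Brandolini-Gigante} to all of $\C$ by a Harish--Chandra type expansion whose coefficients are rational in $\lambda$ with poles only at the resonance points. Taking the Wronskian of the relation $\varphi_\lambda=c(\lambda)\Phi_\lambda+c(-\lambda)\Phi_{-\lambda}$ with $\Phi_{-\lambda}$ and dividing by $\mathcal W(\Phi_\lambda,\Phi_{-\lambda})=-2i\lambda A(t)^{-1}$ gives, for $\lambda\neq0$ and every $t>0$,
\[
c(\lambda)=-\frac{A(t)}{2i\lambda}\,\mathcal W\big(\varphi_\lambda,\Phi_{-\lambda}\big)(t).
\]
Since $A(t)$ is independent of $\lambda$ and $\lambda\mapsto\varphi_\lambda(t)$ together with its $t$-derivative are entire (first listed property of $\varphi_\lambda$), the assertion reduces to (i) continuing $\lambda\mapsto\Phi_{-\lambda}(t)$ and $\lambda\mapsto\Phi_{-\lambda}'(t)$ holomorphically to $\C\setminus\{\tfrac m2 i:m\in\N\}$ for one fixed large $t$, and (ii) deciding whether the prefactor $1/\lambda$ leaves a pole at the origin.

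For (i), on $(t_0,\infty)$ write $\frac{A'(t)}{A(t)}=2\rho+r(t)$ with $r(t)=e^{-\delta t}D(t)$ and look for $\Phi_\lambda(t)=e^{(i\lambda-\rho)t}\Psi_\lambda(t)$ with $\Psi_\lambda(\infty)=1$. Reducing (\ref{equation-phi-lambda}) to first order and integrating twice from $+\infty$ turns this into the Volterra integral equation
\[
\Psi_\lambda(t)=1-\int_t^\infty\frac{e^{2i\lambda(s-t)}-1}{2i\lambda}\,r(s)\big(\Psi_\lambda'(s)+(i\lambda-\rho)\Psi_\lambda(s)\big)\,ds
\]
together with a companion equation for $\Psi_\lambda'$; formally $\Psi_\lambda(t)=\sum_{m\ge0}\Gamma_m(\lambda)e^{-mt}$, and matching the coefficient of $e^{(i\lambda-\rho-m)t}$ in (\ref{equation-phi-lambda}) yields a recursion $m(m-2i\lambda)\Gamma_m(\lambda)=\sum_{0\le j<m}\gamma_{m,j}(\lambda)\Gamma_j(\lambda)$ with $\Gamma_0\equiv1$ and $\gamma_{m,j}$ affine in $\lambda$. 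The only denominators are the resonances $m(m-2i\lambda)$, which vanish exactly at $\lambda=-\tfrac m2 i$, so each $\Gamma_m$ is holomorphic off $\{-\tfrac j2 i:1\le j\le m\}$. One then shows, using the decay of $r$ from (\ref{cond-A}) and the integrability of $G$ along lines in $\Omega$, that the series (equivalently, the Neumann iteration of the Volterra system) converges locally uniformly for $t$ large and for $\lambda$ in compact subsets of $\C\setminus\{-\tfrac m2 i:m\in\N\}$; by uniqueness of the solution with the prescribed asymptotics it coincides with the $\Phi_\lambda$ of \cite{Brandolini-Gigante} where the latter is defined, so it is the meromorphic continuation of $\Phi_\lambda$. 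Hence $\Phi_{-\lambda}(t)$ and $\Phi_{-\lambda}'(t)$ are holomorphic on $\C\setminus\{\tfrac m2 i:m\in\N\}$. The case $\rho=0$ is handled in the same way, the model operator at infinity now being $\frac{d^2}{dt^2}+\frac{2\alpha+1}{t}\frac{d}{dt}$ and the model solutions of Bessel type, with kernel $\lambda^{-1}\sin\lambda(s-t)$ (which is entire in $\lambda$); one again lands on the same resonance set.

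For (ii): if $\rho>0$ then $\Phi_0\sim e^{-\rho t}$ is not $\varphi_0$ and generically $\mathcal W(\varphi_0,\Phi_0)\neq0$, so $1/\lambda$ produces a genuine pole and $\lambda=0$ must be excluded. If $\rho=0$, however, $\varphi_0$ is the bounded solution of $\mathcal Lf=0$ normalized by $f(0)=1$, $f'(0)=0$, and the normalization of $\Phi_\lambda$ forces $\Phi_0$ to be that same bounded solution; consequently $\mathcal W(\varphi_\lambda,\Phi_{-\lambda})$ has a simple zero at $\lambda=0$ which cancels the factor $1/\lambda$, so $c$ extends holomorphically across the origin. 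Combining (i) and (ii) gives that $c$ is holomorphic on $\C\setminus(\{0\}\cup\{\tfrac m2 i:m\in\N\})$ when $\rho>0$ and on $\C\setminus\{\tfrac m2 i:m\in\N\}$ when $\rho=0$.

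The crux is the convergence statement in step (i). Since (\ref{cond-A}) only supplies $\frac{A'}{A}-2\rho=e^{-\delta t}D(t)$ with $D$ merely smooth and bounded, the formal series $\sum_m\Gamma_m(\lambda)e^{-mt}$ need not even be defined term by term, and instead one must bound the Neumann iterates of the Volterra system \emph{uniformly} on compact subsets avoiding $\{-\tfrac m2 i:m\in\N\}$ -- in effect a strip--by--strip continuation in $\Im\lambda$ in which the resonances $\lambda=-\tfrac m2 i$ are the only places where a pole (necessarily simple) can be created. A secondary technical point is identifying $\Phi_0$ sharply enough, in the case $\rho=0$, to justify the cancellation at the origin.
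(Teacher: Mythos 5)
Your proposal follows essentially the same route as the paper: the Wronskian formula $c(\lambda)=\frac{iA(t)}{2\lambda}\big(\varphi_\lambda(t)\Phi_{-\lambda}'(t)-\varphi_\lambda'(t)\Phi_{-\lambda}(t)\big)$, holomorphic continuation of $\lambda\mapsto\Phi_{-\lambda}(t)$ via the series $e^{(i\lambda-\rho)t}\sum_n\Gamma_n(\lambda)e^{-nt}$ whose coefficients are rational in $\lambda$ with poles only at the resonances $\lambda=-\frac{m}{2}i$, and cancellation of the $1/\lambda$ factor at the origin when $\rho=0$ because $\Phi_0$ is constant while $\varphi_0'\equiv 0$. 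The convergence issue you single out as the crux is handled in the paper not by a Volterra/Neumann iteration but by writing $\frac{B'(t)}{B(t)}=\sum_j 2b_j e^{-jt}$ (so the formal recursion is well defined term by term) and invoking the classical Gangolli--Helgason estimate $|\Gamma_n(\lambda)|\leq K_{\lambda,t_0}e^{nt_0}$, which yields absolute, locally uniform convergence for $t>t_0$ and $\lambda$ off the resonance set.
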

\begin{proof} 
For $\lambda(\not=0)\in\C$, we have for $t>0$
\begin{equation}
\nonumber
\varphi_\lambda(t)=c(\lambda)\Phi_\lambda(t) + c(-\lambda)\Phi_{-\lambda}(t).
\end{equation}
Therefore, for $t>0$
\begin{equation}
\nonumber
\varphi_\lambda'(t)=c(\lambda)\Phi_\lambda'(t) + c(-\lambda)\Phi_{-\lambda}'(t).
\end{equation}
Hence, 
\begin{equation}
\label{expression-c}
c(\lambda)=\frac{\varphi_\lambda(t) \Phi_{-\lambda}'(t)-\varphi_\lambda'(t) \Phi_{-\lambda}(t)}{\mathcal W(\Phi_\lambda, \Phi_{-\lambda})(t)}=\frac{i A(t)}{2\lambda}\left(\varphi_\lambda(t) \Phi_{-\lambda}'(t)-\varphi_\lambda'(t) \Phi_{-\lambda}(t)\right).
\end{equation} Now we consider the case when $\rho>0$.
To prove the theorem (in this case) we shall prove that the functions 
\begin{equation}
\nonumber
\lambda\mapsto \Phi_{-\lambda}(t), \Phi_{-\lambda}'(t),
\end{equation}
both are holomorphic on $\C\setminus\{\frac{m}{2}i\mid m\in \N\}$. We have $\Phi_\lambda$ is a solution of 
\begin{equation}\label{eqn-Phi-lambda}
y'' + \frac{A'(t)}{A(t)} y' + (\lambda^2 + \rho^2)y=0.
\end{equation}
Let \begin{equation}
\nonumber 
A(t)=(\sinh t)^{2\alpha +1} (\cosh t)^{2\beta +1} B(t).
\end{equation} Then 
\begin{equation}
\nonumber
\frac{A'(t)}{A(t)}= (2\alpha + 1) \coth t + (2\beta + 1)\tanh t + \frac{B'(t)}{B(t)}.
\end{equation}
Let \begin{equation}
\nonumber
\frac{B'(t)}{B(t)}=\sum_{j=0}^\infty 2 b_j e^{-j t}.
\end{equation}
Then $\rho=\alpha + \beta + 1 + b_0$.
Let \begin{equation}
\nonumber
y(t)=\sum_{n=0}^\infty \Gamma_n(\lambda) e^{(i\lambda-\rho-n)t}
\end{equation}
be a solution of (\ref{eqn-Phi-lambda}). Then putting this in the equation (\ref{eqn-Phi-lambda}) and comparing coefficients we get 
\begin{equation}
\nonumber
\Gamma_1(\lambda)=\frac{-2b_1(i\lambda-\rho)}{1-2i\lambda} \Gamma_0(\lambda)
\end{equation}
\begin{equation}\nonumber
\Gamma_2(\lambda)=\frac{1}{4(1-i\lambda)}\left(-2(2\alpha-2\beta + b_2)(i\lambda-\rho)\Gamma_0(\lambda) -2b_1(i\lambda-\rho-1) \Gamma_1(\lambda)\right),
\end{equation}
and 
\begin{equation}
\nonumber
3(3-2i\lambda)\Gamma_3(\lambda)=-2(2\alpha-2\beta + b_2)(i\lambda-\rho-1)\Gamma_1(\lambda)-2b_3(i\lambda-\rho)\Gamma_0(\lambda)-2b_1(i\lambda-\rho-2)\Gamma_2(\lambda),
\end{equation}
and continued in this way. This shows that each $\Gamma_n$ is holomorphic on $\C\setminus \{-i\frac{m}{2}\mid m\in \N\}$.
As in the classical case of symmetric spaces (see \cite[Ch. IV, Lemma 5.3]{Helgason}), it is easy to check that for any $t_0>0$, 
\begin{equation}\label{est-gamma-n}
|\Gamma_n(\lambda)|\leq K_{\lambda, t_0} e^{n t_0}.
\end{equation}
From equation (\ref{est-gamma-n}) it follows that 
\begin{equation}
\label{defn-Phi}
\Phi_\lambda(t)=e^{(i\lambda-\rho)t}\sum_{n=0}^\infty \Gamma_n(\lambda) e^{-n t}, t>0
\end{equation}
is well defined and converges absolutely and uniformly for $t>0$ and for $\lambda\in \C\setminus\{-i\frac{m}{2}\mid m\in \N\}$.
This shows that $\lambda\mapsto \Phi_\lambda(t), \Phi_\lambda'(t)$ are holomorphic on $\C\setminus\{-i\frac{m}{2}\mid m\in \N\}$.

Now if $\rho=0$, it follows from the expression above that $\Gamma_n(0)=0$ for $n=1, 2, \cdots $. Therefore we have \begin{equation}\nonumber
\Phi_0(t)=\Gamma_0(0) \text{ for all } t>0.
\end{equation}
 Then the function \begin{equation}\nonumber 
\lambda\mapsto\varphi_\lambda(t) \Phi_{-\lambda}'(t)-\varphi_\lambda'(t) \Phi_{-\lambda}(t)
\end{equation} at the point $\lambda=0$ reduces to $-\Gamma_0(0) \varphi_0'(t)$ which is equal to $0$, as in this ($\rho=0$) case, $\varphi_0'(t)=0$. Hence from (\ref{expression-c}) it follows
that the function $c$ is holomorphic at $\lambda=0$. Rest is similar to above case. This completes the proof. 
\end{proof}

We have the following corollary:
\begin{corollary}\label{estimate-c}
\begin{enumerate}
\item The $c$-function has a simple pole at $\lambda=0$ if $\rho>0$ but it is holomorphic at $\lambda=0$ if $\rho=0$.
\item Let $\delta<\frac 12$. Then for $|\Im \lambda|\leq \delta$ and for all $\lambda$ outside a neighbourhood of $0$
\begin{equation}
\nonumber
|c(\lambda)|\leq C p(|\lambda|),
\end{equation} 
for some polynomial $p$. 
\end{enumerate}
\end{corollary}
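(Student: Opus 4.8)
The plan is to extract both statements from the Wronskian formula \eqref{expression-c}, $c(\lambda)=\frac{iA(t)}{2\lambda}\bigl(\varphi_\lambda(t)\Phi_{-\lambda}'(t)-\varphi_\lambda'(t)\Phi_{-\lambda}(t)\bigr)$, which holds for every fixed $t>0$, together with the expansion \eqref{defn-Phi} of $\Phi_\lambda$ and the bound \eqref{est-gamma-n} established in the proof of Theorem \ref{c-holomorphic}. Since $\delta<\frac12$, the strip $\{|\Im\lambda|\le\delta\}$ meets the exceptional set $\{0\}\cup\{\tfrac{m}{2}i:m\in\N\}$ of Theorem \ref{c-holomorphic} only at the origin, so on that strip $c$ is holomorphic away from $\lambda=0$.

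For part (1) with $\rho=0$, the proof of Theorem \ref{c-holomorphic} already shows $\Phi_0\equiv\Gamma_0(0)$ and $\varphi_0'\equiv0$, so the numerator in \eqref{expression-c} vanishes at $\lambda=0$ and $c$ is holomorphic there. For $\rho>0$, fix $t>0$: as $\lambda\mapsto\varphi_\lambda(t)$ is entire and $\lambda\mapsto\Phi_{-\lambda}(t),\Phi_{-\lambda}'(t)$ are holomorphic near $0$, the product $\lambda c(\lambda)=\frac{i}{2}A(t)\bigl(\varphi_\lambda(t)\Phi_{-\lambda}'(t)-\varphi_\lambda'(t)\Phi_{-\lambda}(t)\bigr)$ is holomorphic near $\lambda=0$, so $c$ has at most a simple pole there, with residue $\frac{i}{2}A(t)\,\mathcal W(\varphi_0,\Phi_0)(t)$. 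Because $\varphi_0$ and $\Phi_0$ both solve $\mathcal L f+\rho^2f=0$, the quantity $A(t)\mathcal W(\varphi_0,\Phi_0)(t)$ is a constant, and the residue is nonzero exactly when $\varphi_0$ and $\Phi_0$ are linearly independent. I would establish this by comparing asymptotics as $t\to\infty$: by \eqref{defn-Phi} and \eqref{est-gamma-n}, $\Phi_0(t)=e^{-\rho t}(\Gamma_0(0)+o(1))$ with $\Gamma_0(0)\neq0$, whereas $\varphi_0$, the solution regular at the origin, decays like $c_0\,t\,e^{-\rho t}$ with $c_0\neq0$ -- the analogue of the classical Harish-Chandra estimate for the zero spherical function, which in the present setting is ensured by the exponential approach \eqref{cond-A} of $A'/A$ to $2\rho$ and belongs to the Chébli--Trimèche circle of results in \cite{Bloom,Brandolini-Gigante}. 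Hence $\varphi_0\notin\C\Phi_0$ and $c$ has a genuine simple pole at $0$.

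For part (2), fix once and for all a value $t_1>t_0$ (with $t_0$ as in \eqref{cond-A}) and estimate the three factors in $c(\lambda)=\frac{iA(t_1)}{2\lambda}\bigl(\varphi_\lambda(t_1)\Phi_{-\lambda}'(t_1)-\varphi_\lambda'(t_1)\Phi_{-\lambda}(t_1)\bigr)$ for $|\Im\lambda|\le\delta$ and $|\lambda|\ge\varepsilon$. Property (4) of $\varphi_\lambda$ gives $|\varphi_\lambda(t_1)|\le C(1+t_1)e^{(\delta-\rho)t_1}$, a constant; and since $\bigl(A\varphi_\lambda'\bigr)'=-(\lambda^2+\rho^2)A\varphi_\lambda$, integrating from $0$ (the boundary term vanishes because $\varphi_\lambda'(0)=0$) yields $|\varphi_\lambda'(t_1)|\le\frac{|\lambda|^2+\rho^2}{A(t_1)}\int_0^{t_1}A(s)|\varphi_\lambda(s)|\,ds\le C(1+|\lambda|)^2$. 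For $\Phi_{-\lambda}(t_1)$ and $\Phi_{-\lambda}'(t_1)$ I use \eqref{defn-Phi}: here $|\Im\lambda|\le\delta<\frac12$ keeps $-\lambda$ uniformly away from the poles $\{-\tfrac{m}{2}i\}$ of the coefficients $\Gamma_n$, so running the recursions from the proof of Theorem \ref{c-holomorphic} as in \cite[Ch.~IV, Lemma~5.3]{Helgason} gives a bound of the form $|\Gamma_n(-\lambda)|\le C(1+|\lambda|)^{d}e^{nt_0}$ with $d$ fixed; summing against $e^{-nt_1}$ with $t_1>t_0$ then gives $|\Phi_{-\lambda}(t_1)|,|\Phi_{-\lambda}'(t_1)|\le C(1+|\lambda|)^{d+1}$. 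Combining the three bounds and using $|\lambda|\ge\varepsilon$ to dispose of $1/\lambda$, one obtains $|c(\lambda)|\le C(1+|\lambda|)^{d+3}$, which is the claimed polynomial estimate.

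I expect part (2) to be essentially routine, the only subtlety being the uniformity in $\lambda$ of the estimate on the $\Gamma_n$, which is a standard but slightly technical sharpening of \eqref{est-gamma-n}. The real obstacle is the nonvanishing of the residue in part (1): one genuinely needs to know that, also in the perturbed case, the zero eigenfunction $\varphi_0$ picks up the polynomial factor $t$ in its decay -- equivalently, that it is not proportional to the recessive solution $\Phi_0$ -- and it is precisely here that the structural hypotheses (1)--(4), especially \eqref{cond-A}, are used.
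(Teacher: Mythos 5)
Your argument follows the same route as the paper's (very terse) proof: both parts are read off from the Wronskian formula \eqref{expression-c} together with the series \eqref{defn-Phi} and the rationality of the $\Gamma_n$'s, and your fleshed-out version of part (2) -- fixing $t_1>t_0$, bounding $\varphi_\lambda(t_1)$ by property (4), $\varphi_\lambda'(t_1)$ by integrating $(A\varphi_\lambda')'=-(\lambda^2+\rho^2)A\varphi_\lambda$, and $\Phi_{-\lambda}(t_1),\Phi_{-\lambda}'(t_1)$ by a $\lambda$-uniform sharpening of \eqref{est-gamma-n} -- is exactly the intended argument, correctly identifying that the strip $|\Im\lambda|\le\delta<\tfrac12$ avoids the poles $\{\tfrac{m}{2}i\}$ of the coefficients. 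The $\rho=0$ case of part (1) also matches what is already in the proof of Theorem \ref{c-holomorphic}.

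The one point where you go beyond the paper is the nonvanishing of the residue at $\lambda=0$ when $\rho>0$, i.e.\ that the pole is genuine rather than removable, and there your justification is shaky. You correctly reduce it to the linear independence of $\varphi_0$ and $\Phi_0$, but you then invoke the sharp asymptotic $\varphi_0(t)\sim c_0\,t\,e^{-\rho t}$ with $c_0\neq 0$. The paper only records the one-sided bound $|\varphi_\lambda(t)|\le C(1+|t|)e^{-\rho|t|}e^{|\Im\lambda||t|}$, and the standard derivation of the two-sided asymptotic for $\varphi_0$ proceeds precisely by expanding $\varphi_\lambda=c(\lambda)\Phi_\lambda+c(-\lambda)\Phi_{-\lambda}$ around $\lambda=0$ and reading off the coefficient of $te^{-\rho t}$ as (a multiple of) the residue of $c$ at $0$ -- so quoting it here is circular unless you supply an independent proof of the lower bound in the perturbed setting (the known Ch\'ebli--Trim\`eche results do not automatically cover the factor $B$). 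To be fair, the paper's own proof says only ``First part follows from \eqref{expression-c}'' and does not address this either; but if you want a complete argument you need either an independent proof that $\varphi_0\notin\C\Phi_0$ or a direct evaluation of the constant $A(t)\mathcal W(\varphi_0,\Phi_0)(t)$ that does not presuppose the asymptotics of $\varphi_0$.
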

\begin{proof}
First part follows from (\ref{expression-c}). Second part  follows from the series expansion (\ref{defn-Phi}) of $\Phi_\lambda$ and noting that each $\Gamma_n$ is a rational function.
\end{proof}

\section{Compact Case} 
In order to obtain an analogue of Ramanujan's Master theorem, Theorem \ref{eigenexpansion} for the Sturm Liouville operator, we need to develop the corresponding Fourier series for $\mathcal{L}$ when restricted to, $z=it, t\in (0,\pi/2).$
In this section we will define a positive, symmetric densely defined differential operator $-L$ on $(0,\pi/2)$ on a suitable Hilbert space such that the following holds \begin{equation}\label{relation}-L w(t)=\mathcal{L}u(z)_{z=it},\end{equation} where $w(t)=u(it)$ for $u$ twice differentiable defined in $\Omega\setminus \{0\}$.  We will  study the spectral decomposition of $-L$ on $(0,\pi/2)$ under suitable boundary conditions such that we obtain the eigenfunctions of $L$ as the restriction of the eigenfunctions of $\mathcal{L}$ on $z=it$. This will be in direct analogy of the functions $\{e^{i\lambda z}\}$ as discussed in the introduction.
Let us recall that $$A(t)=(\sinh t)^{2\alpha+1} (\cosh t)^{2\beta+1} B(t),$$ where $B$ is a non zero, even holomorphic function on $\Omega.$
We also have $B(it)>0$ when $t\in(-\pi/2,\pi/2).$
We define \begin{equation}\nonumber
\tilde{A}(t)=(-i)^{2\alpha + 1} A(it).
\end{equation}
Let $\tilde{B}(t)=B(it)$.
Indeed $$\tilde{A}(t)=(\sin t)^{2\alpha+1} (\cos t)^{2\beta+1} \tilde{B}(t).$$
Clearly $\tilde{A}>0$ on $(0,\pi/2).$
We define the Sturm Liouville operator corresponding to $\tilde{A}$ on $(0,\pi/2)$ as
\begin{equation}
 L=\frac{d^2}{dt^2} +  \frac{\tilde{A}'(t)}{\tilde{A}(t)} \frac{d}{dt}.
\end{equation}
It is easy to verify that this choice of $L$ satifies (\ref{relation}) above.
We define 
$\mathcal{D}(L)$ to be the space of $f\in L^2\left(\left(  0,\pi/2\right),\tilde{A}(t) dt \right)$ such that  $f$ and $\tilde{A}(\cdot)f^{\prime}$ are absolutely continuous on
any compact subinterval of $\left(  0,\pi/2\right)$ and  $L\left(
f\right)  \in L^{2}\left( (0,\pi/2),\tilde{A}(t) dt\right)$.

The operator $-L$ is a densely defined operator from $\mathcal{D}(L)$ to $L^2\left((0,\pi/2), \tilde{A}(t)dt\right).$ Let \begin{equation} \nonumber 
\mathcal{D}(L)_0:=\{f\in \mathcal{D}(L): f ~~ \text{is compactly supported on}~~ (0,\pi/2)\}.
\end{equation}
Let us denote $-L$ restricted on $\mathcal{D}(L)_0$ as $-L_0.$
An integration by parts argument shows that $-L_0$ is a positive and symmetric operator on $\mathcal{D}(L)_0.$ We need to obtain a self adjoint extension of $-L_0$ on $L^2((0,\pi/2), \tilde{A}(t)dt)$ with suitable boundary conditions so that the eigenfunctions are restriction of $\varphi_{\lambda}$ to $z=it, t\in(0,\pi/2)$ for $\lambda$ related to the spectrum of the self adjoint extension of $-L_0.$ 

Let $u$ be an eigenfunction of $-L$ with eigenvalue $\mu.$
After applying the classical Liouville transformation i.e. $v(t)=\sqrt{\tilde{A}(t)}u(t)$ we get another differential operator $-l$ such that $v$ is an eigenfunction of $-l$ with eigenvalue $\mu.$ One can explicitly write the expression of $-l$ as follows: $$-l=-\frac{d^2}{dt^2}+q(t),$$
where $$q(t)=\frac{1}{4}\left(\frac{\tilde{A}'(t)}{\tilde{A}(t)}\right)^2 + \frac{1}{2}\left(\frac{\tilde{A}'(t)}{\tilde{A}(t)}\right)'.$$
In fact  \begin{equation}\label{q(t)} q(t)= \left((\alpha^2-\frac{1}{4}) \cot^2 t +(\beta^2-\frac{1}{4})\tan^2 t -\chi(t)\right),\end{equation} 
where \newline $$\chi(t)=\left(\beta+\frac{1}{2}\right)\frac{\tilde{B}'(t)}{\tilde{B}(t)}\tan t - \left(\alpha+\frac{1}{2}\right)\frac{\tilde{B}'(t)}{\tilde{B}(t)}\cot t + \frac{1}{4}\left(\frac{\tilde{B}'(t)}{\tilde{B}(t)}\right)^2 -\frac{1}{2}\frac{\tilde{B}''(t)}{\tilde{B}(t)}.$$
It follows  from the assumptions on  $\tilde{B}$ that $\chi$ is a smooth function on $[-\frac{\pi}{2},\frac{\pi}{2}].$
 Therefore $q$ has singularities only at $0$ and $\pi/2$. A simple evaluation gives that 
  \begin{equation}\label{sturmtrans}\sqrt{\tilde{A}(t)}Lu(t)=lv(t),\end{equation} 
 where $v(t)=\sqrt{\tilde{A}(t)}u(t).$
 The unbounded operator $l:\mathcal{D}(l)\subset L^{2}(0,\pi/2)\rightarrow L^{2}(0,\pi/2)$ is
defined on
\[
\mathcal{D}(l)=\left\{  f\text{ and }f^{\prime}\text{ AC on
any compact subinterval of }\left(  0,\pi/2\right)  \text{, }f,l\left(
f\right)  \in L^{2}\left(  0,\pi/2\right)  \right\}  .
\]
Here AC stands for absolutely continuous.
We observe that $\tilde{A}>0$ and $\sqrt{\tilde{A}}$ is an AC function on any compact sub interval of $(0,\pi/2)$ and therefore it follows that \begin{equation}\label{side}\mathcal{D}(l)=\sqrt{\tilde{A}(\cdot)}\mathcal{D}(L).
\end{equation} 
Let 
\begin{equation}\nonumber
\mathcal{D}(l)_0:=\{f\in \mathcal{D}(l): f ~~ \text{is compactly supported on}~~ (0,\pi/2)\}.\end{equation}  A similar identity 
 (\ref{side}) also holds for $\mathcal{D}(l)_0$ and $\mathcal{D}(L)_0.$ We denote $-l$ restricted on $\mathcal{D}(l)_0$ as $-l_0.$
Let $v_1,v_2\in \mathcal{D}(l)_0.$
The following holds:
$$\left(-L_0u_1,u_2\right)_{L^2((0,\pi/2),\tilde{A}(t)dt)}=\left(-l_0v_1,v_2\right)_{L^2(0,\pi/2)},$$
where $v_i(t)=\sqrt{\tilde{A}(t)}u_i(t) $ for $i=1,2.$
The last identity just uses the relation between $u_i$ and $v_i, i=1,2$ and equation (\ref{sturmtrans}). This shows that $-l_0$ is a positive and symmetric operator on $\mathcal{D}(l)_0.$
In fact if $u$ is an eigenfunction of $-L$ with eigenvalue $\mu,$ the equation (\ref{sturmtrans}) gives that $\sqrt{A(t)}u(t)$ is an eigenfunction of $-l$ with the same eigenvalue and vice versa too.
Therefore the eigenfunctions of $ -L$ and $-l$ are in one to one correspondence by Liouville transformation.
A simple computation gives \begin{equation}\nonumber
\|v\|_{L^2(0,\pi/2)}=\|u\|_{L^2((0,\pi/2),\tilde{A}(t)dt)}.
\end{equation}
In fact the map $u\rightarrow \sqrt{A(\cdot)}u(.)$ is an isometry from $L^2((0,\pi/2),\tilde{A}(t)dt))$ onto $L^2(0,\pi/2).$ In view of the above relation between $-L$ and $-l$, in order to obtain self adjoint extension of $-L_0$ on $L^2((0,\pi/2),\tilde{A}(t)dt),$ it is enough to obtain a self adjoint extension of $-l_0$ on $L^2(0,\pi/2).$

We have the following Theorem:
\begin{theorem}\label{spectral}
The operator $-L_{0}$  has a self adjoint extension (with abuse of notation call it $-L$) on $\tilde{D}:=\left(\sqrt{\tilde{A}}\right)^{-1}\mathcal{D}$ $($where $\mathcal{D}\subset{L^2(0,\pi/2)}$ is as defined in Proposition \ref{boundary conditions}$)$. 
The spectrum of $-L$ is purely discrete, bounded below and all
eigenvalues are simple. The eigenvalues can be ordered by%
\[
\nu_0<\nu_{1}<\ldots<\nu_{n}<\ldots
\]
with $\nu_{n}\rightarrow+\infty$. The eigenfunctions $\{\Psi_j\}$ corresponding to the eigenvalues $\{\nu_j\}$ form an orthonormal basis of $L^2((0,\pi/2),\tilde{A}(t)dt)$ and there exist constants $c_j$ with a polynomial growth in $j$ such that $$\Psi_j(t)=c_{j} \varphi_{i\sqrt{\nu_j +\rho^2}}\left(  it\right) $$ for all $j\geq j_0,$ where $j_0$ is the least natural number for which $\nu_j+\rho^2 >0.$
When $\nu_j+\rho^2\leq 0,$ 
let $\beta_j:=\sqrt{-\left(\nu_j +\rho^2\right)}$. Then for $j=0,1,2,...j_0-1$ we have $$\Psi_j(t)=c_{j} \varphi_{\beta_j}\left(  it\right).$$
When $\alpha,\beta>0,$ the operator $-L$ is  the Friedrich's extension of $-L_{0}$ and $\nu_j's$ are non negative.	
\end{theorem}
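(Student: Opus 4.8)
The plan is to push the whole problem through the Liouville isometry to the one-dimensional Schr\"odinger operator $-l$ on the bounded interval $(0,\pi/2)$, use the self-adjoint extension produced in Proposition \ref{boundary conditions}, and then transport every conclusion back. The map $u\mapsto\sqrt{\tilde A}\,u$ is unitary from $L^2((0,\pi/2),\tilde A(t)\,dt)$ onto $L^2(0,\pi/2)$ and intertwines $-L$ with $-l$ by (\ref{sturmtrans})--(\ref{side}); hence conjugating the self-adjoint extension of $-l_0$ with domain $\mathcal D$ yields a self-adjoint extension of $-L_0$ with domain $\tilde D=(\sqrt{\tilde A})^{-1}\mathcal D$, and the spectral theory of $-L$ is identical to that of $-l$. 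In particular the eigenfunctions of $-L$ are $\Psi_j=(\sqrt{\tilde A})^{-1}v_j$ for the eigenfunctions $v_j$ of $-l$, with the same eigenvalues $\nu_j$ and $\|\Psi_j\|_{L^2(\tilde A\,dt)}=\|v_j\|_{L^2(0,\pi/2)}$.

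For discreteness and semiboundedness: since $(0,\pi/2)$ is bounded and the two singular endpoints of $-l$ fall under Weyl's limit-point/limit-circle dichotomy, the resolvent of $-l$ is an integral operator whose Green's kernel is square-integrable on $(0,\pi/2)^2$, hence Hilbert--Schmidt; therefore the spectrum is purely discrete, with finite multiplicities and no finite accumulation point. An integration by parts gives, for $u\in\mathcal D(L)_0$, the identity $(-L_0u,u)_{L^2(\tilde A\,dt)}=\int_0^{\pi/2}|u'(t)|^2\,\tilde A(t)\,dt\ge0$, so $-L_0$ (equivalently $-l_0$) is non-negative and symmetric; its deficiency indices being finite, any self-adjoint extension has only finitely many eigenvalues below $0$. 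Combining these, the $\nu_j$ accumulate only at $+\infty$ and are bounded below, so they may be listed as $\nu_0\le\nu_1\le\cdots\to+\infty$; only finitely many satisfy $\nu_j+\rho^2\le0$, and this defines $j_0$.

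Simplicity follows from the standard ODE argument. The solution space of $-lv=\nu_jv$ on $(0,\pi/2)$ is two-dimensional, so each eigenspace is at most two-dimensional; if it were exactly two-dimensional, then every solution of $-lv=\nu_jv$ would lie in $\mathcal D$ and hence be square-integrable near each endpoint, contradicting the limit-point property at a limit-point endpoint and contradicting the genuine codimension-one boundary condition built into $\mathcal D$ at a limit-circle endpoint. Hence all eigenvalues are simple and the ordering is strict, $\nu_0<\nu_1<\cdots$. The spectral theorem for the self-adjoint operator $-l$ with compact resolvent gives that $\{v_j\}$ is an orthonormal basis of $L^2(0,\pi/2)$, and transporting through the isometry, $\{\Psi_j\}$ is an orthonormal basis of $L^2((0,\pi/2),\tilde A(t)\,dt)$.

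It remains to identify $\Psi_j$ with a restriction of $\varphi_\lambda$ and to control $c_j$. Put $\lambda_j=i\sqrt{\nu_j+\rho^2}$ if $\nu_j+\rho^2>0$ and $\lambda_j=\beta_j=\sqrt{-(\nu_j+\rho^2)}$ otherwise (the sign is irrelevant since $\varphi_\lambda$ is even in $\lambda$). By Lemma \ref{holo-extn-phi}, $\varphi_{\lambda_j}$ extends holomorphically to $\Omega$, so $w_j(t):=\varphi_{\lambda_j}(it)$ is smooth on $(-\pi/2,\pi/2)$; by (\ref{defn-phi-lambda}), $\mathcal L\varphi_{\lambda_j}=-(\lambda_j^2+\rho^2)\varphi_{\lambda_j}=\nu_j\varphi_{\lambda_j}$, and then (\ref{relation}) shows $-Lw_j=\nu_jw_j$ on $(0,\pi/2)$, with $w_j$ analytic and equal to $1$ at $0$ and $\sqrt{\tilde A}\,w_j$ square-integrable near $0$. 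The boundary condition of $\mathcal D$ at $0$ singles out precisely this solution up to a scalar (it is the distinguished square-integrable solution in the limit-point case and is the one selected by the boundary condition in the limit-circle case), so $\Psi_j=c_j\,w_j$ with $c_j\ne0$, and the condition at $\pi/2$ is then automatic. From $\|\Psi_j\|_{L^2(\tilde A\,dt)}=1$ we get $|c_j|=\|w_j\|_{L^2((0,\pi/2),\tilde A\,dt)}^{-1}$, and a lower bound on this norm polynomial in $j$ comes from a local estimate near $0$: for $t\le c\,\nu_j^{-1/2}$ one has $|w_j(t)|\ge\frac{1}{2}$ (from $w_j(0)=1$, $w_j'(0)=0$ and a Gronwall estimate applied to $w_j''+\frac{\tilde A'}{\tilde A}w_j'+\nu_jw_j=0$), so $\|w_j\|^2_{L^2(\tilde A\,dt)}\gtrsim\int_0^{c\,\nu_j^{-1/2}}t^{2\alpha+1}\,dt\gtrsim\nu_j^{-(\alpha+1)}$, which with the Weyl asymptotics $\nu_j=O(j^2)$ gives $|c_j|\le C\,j^{\alpha+1}$. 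Finally, when $\alpha,\beta>0$ the subdominant solution at each limit-circle endpoint is the bounded one, namely $\varphi_\lambda(i\cdot)$, so the extension singled out by $\mathcal D$ is the Friedrichs extension of $-L_0$; since $-L_0\ge0$, so is its Friedrichs extension, whence $\nu_j\ge0$. I expect the polynomial bound on $c_j$ to be the main technical point: it rests on Theorem \ref{est-phi} (for $\xi=it$ and $\lambda_j$ purely imaginary the factor $e^{|\Im(\lambda_j\xi)|}$ equals $1$, giving bounds on $w_j$ uniform on compact subsets of $(0,\pi/2)$) together with the eigenvalue asymptotics; a secondary nuisance is bookkeeping the limit-point/limit-circle classification over the full range $\alpha,\beta\ge-\frac{1}{2}$, which is precisely why $\alpha,\beta>0$ is required for the Friedrichs statement.
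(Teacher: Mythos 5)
Your proposal is correct and follows the same overall route as the paper: transport everything through the unitary Liouville map to $-l$, invoke the self-adjoint extension and spectral data of Proposition \ref{boundary conditions}, identify each eigenfunction with a multiple of $\tilde{\varphi}_{i\sqrt{\nu_j+\rho^2}}$ via the boundary condition at $0$ (limit-point uniqueness of the $L^2$ solution when $\alpha\geq 1$, the explicit condition $[y,\tilde{\varphi}]_l(0)=0$ when $\alpha<1$), and recognize the extension as the Friedrichs extension for $\alpha,\beta>0$ by the principal-solution characterization of Niessen--Zettl. The one place you genuinely diverge is the polynomial growth of $c_j$: the paper simply cites \cite[Lemma 13]{JG}, whereas you derive it from scratch by writing $|c_j|=\|w_j\|_{L^2(\tilde{A}\,dt)}^{-1}$ and bounding the norm from below via the integrated form $w_j(t)=1-\nu_j\int_0^t\tilde{A}(r)^{-1}\int_0^r\tilde{A}(s)w_j(s)\,ds\,dr$, which gives $|w_j|\geq\frac{1}{2}$ on $(0,c\,\nu_j^{-1/2})$ and hence $\|w_j\|^2\gtrsim\nu_j^{-(\alpha+1)}$; combined with the eigenvalue asymptotics $\nu_j=O(j^2)$ (which the paper records as (\ref{asymptote}), again from \cite{JG}) this yields $|c_j|\leq Cj^{\alpha+1}$. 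This self-contained argument is sound and arguably more informative than the citation, since it makes the exponent explicit. Your rederivation of discreteness via a Hilbert--Schmidt resolvent and of the finiteness of negative eigenvalues via finite deficiency indices is also fine, though the paper obtains both directly from Proposition \ref{boundary conditions}; just be aware that the Green's-kernel square-integrability you assert deserves the same care at the singular endpoints that the limit-point/limit-circle bookkeeping already forces on you elsewhere.
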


The proof of the above theorem is given at the end of this section.

\noindent {\bf Relation between $\mathcal{L}$ and $-L$:}
Let $u$ be a twice differentiable function on $\Omega$. Define $w(t):=u(it).$ Using the relation between $\tilde{A}$ and $A$ it is easy to see that the relation (\ref{relation}) holds, more precisely $$\mathcal{L}u(z)_{z=it}=-Lw(t)$$
We define $w_{\mu}(t)=\varphi_{i\sqrt{\mu+\rho^2}}(it),$ where $\mu+\rho^2\in \C\setminus (-\infty,0].$ 
We know that \begin{equation} \nonumber \mathcal{L}\varphi_{i\sqrt{\mu+\rho^2}}(z)=\mu \varphi_{i\sqrt{\mu+\rho^2}}(z), \end{equation} for all $z\in\Omega\setminus \{0\}$ (in particular when $z=it, t\in(0,\pi/2)$). Therefore we have \begin{equation} \nonumber - Lw_{\mu} (t) = \mu w_{\mu}(t), t\in (0,\pi/2).\end{equation}
In the case when $\mu+\rho^2\leq 0,$ define $\beta=\sqrt{-(\mu+\rho^2)}.$
By the same principle as above we can check that $-Lu_{\beta}(t)=\beta u_{\beta}(t),$ where $u_{\beta}(t)=\varphi_{\beta}(it).$
We define $$\tilde{\varphi}_{i\sqrt{\mu+\rho^2}}(t)=\sqrt{\tilde{A}(t)}\varphi_{i\sqrt{\mu+\rho^2}}(it)$$ for $t\in(0,\pi/2).$ By the correspondence between $ -L$ and $-l$ (\ref{sturmtrans}), we also have 
\begin{equation}\nonumber 
-l\tilde{\varphi}_{i\sqrt{\mu+\rho^2}}(t)= \mu \tilde{\varphi}_{i\sqrt{\mu+\rho^2}}(t), t\in(0,\pi/2). \end{equation}
Similarly when $\mu+\rho^2\leq 0,$ define $\beta=\sqrt{-\left(\mu+\rho^2\right)}.$ We get that $-L\tilde{\varphi_{\beta}}=\mu \tilde{\varphi_{\beta}}.$

\noindent{\bf Spectral Decomposition of $-l$:}
Let us recall that $$-l=-\frac{d^2}{dt^2}+q(t),$$
where $q(t)$ is defined as in equation (\ref{q(t)}).
 It is known that $0$ and $\pi/2$ are non-oscillatory end points of $-l,$ i.e. there exist solutions of $-lu_i =\mu u_i, i=1,2$ such that $u_i$ is non zero in $(0,\epsilon)$ and $(\pi/2-\epsilon,\pi/2)$ respectively for $i=1,2$ (see section $2$ \cite{Yurko}) where $u_i's$ are defined on $(0,\pi/2)$. (See Appendix for further details.)
 
 Let us fix $\mu\geq 0$. We know that $-l\tilde{\varphi}_{i\sqrt{\mu+\rho^2}}(t)=\mu \tilde{\varphi}_{i\sqrt{\mu+\rho^2}}(t), t\in (0,\pi/2).$
 As stated in the last section, $\varphi_{i\sqrt{\mu+\rho^2}}(0)=1, \varphi'_{i\sqrt{\mu+\rho^2}}(0)=0$ and $\sqrt{\tilde{A}(t)}\sim t^{\alpha +1/2},\sqrt{\tilde{A}'(t)}\sim t^{\alpha -1/2}$ near $0+$. Therefore $\tilde{\varphi}_{i\sqrt{\mu+\rho^2}}(t)\sim t^{\alpha +1/2}$ and $\tilde{\varphi}'_{i\sqrt{\mu+\rho^2}}(t)\sim t^{\alpha -1/2}$ in some right neighbourhood of $0.$ 
 In fact one can also construct another solution of 
  $-lu=\mu u$ in $(0,\pi/2)$
 such that it satisfies $u(t)\sim t^{-\alpha +1/2}$ and and $u'(t)\sim t^{-\alpha-1/2}$ and it is also linearly independent to $\tilde{\varphi}_{i\sqrt{\mu+\rho^2}}$ (see section $2$ \cite{Yurko}). Similarly, given $\tilde{\mu}$, we can also find two linearly independent eigenfunctions $W_{\widetilde{\mu},\pm\beta}$ defined on $(0,\frac{\pi}{2})$ satisfying $$-lW_{\widetilde{\mu},\pm\beta}(t)=\tilde{\mu}W_{\widetilde{\mu},\pm\beta}(t), t\in (0,\frac{\pi}{2})$$  such that $W_{\widetilde{\mu},\pm\beta}(t)\sim (\frac{\pi}{2}-t)^{\pm\beta+\frac{1}{2}}$ and $W_{\widetilde{\mu},\pm\beta}'(t)\sim (\frac{\pi}{2}-t)^{\pm\beta-\frac{1}{2}}$ (see section $2$ \cite{Yurko}).
For the $\beta\leq \alpha$ we have the following proposition. The other case i.e. $\alpha<\beta$ follows similarly.
By \cite[Theorem 4.2]{Niessen} \cite[Theorem 50, page 1478]{DS} and \cite[Proposition 9]{JG}, the following holds:  \begin{proposition}
\label{boundary conditions}The operator $-l_0$ has a self adjoint extension (call it $-l$) on %
\[
\mathcal{D}=\left\{
\begin{array}
[c]{ll}%
\left\{  y\in\mathcal{D}(l):\left[  y,\tilde{\varphi}_{i\sqrt{\mu+\rho^2}}\right]  _{l}\left(
0\right)  =\left[  y,W_{\widetilde{\mu},\beta}\right]  _{l}\left(
\pi/2\right)  =0\right\}  & \text{for }-1/2<\beta\leq\alpha<1\\
\left\{  y\in\mathcal{D}(l):\left[  y,W_{\mu,\beta}\right]  _{l}\left(
\pi/2\right)  =0\right\}  & \text{for }-1/2<\beta<1\leq\alpha\\
\mathcal{D}(l)& \text{for }1\leq\beta\leq\alpha.
\end{array}
\right.
\]  Here%
\begin{align*}
\left[  y,u\right]  _{l}\left(  0\right)   &  =\lim_{t\rightarrow0+}\left(
y\left(  t\right)  \overline{u^{\prime}\left(  t\right)  }-y^{\prime}\left(
t\right)  \overline{u\left(  t\right)  }\right) \\
\left[  y,u\right]  _{l}\left(  \pi/2\right)   &  =\lim_{t\rightarrow
\pi/2-}\left(  y\left(  t\right)  \overline{u^{\prime}\left(  t\right)
}-y^{\prime}\left(  t\right)  \overline{u\left(  t\right)  }\right).
\end{align*}
The operator $-l$ is bounded from below in $L^2(0,\pi/2)$ and the domain is independent of the choice of $\mu$ and $\widetilde{\mu}$.
The spectrum of $-l$ is purely discrete, bounded below and all
eigenvalues are simple. The eigenvalues can be ordered by%
\[
\nu_{0}<\nu_{1}<\ldots<\nu_{n}<\ldots
\]
with $\nu_{n}\rightarrow+\infty$.
\end{proposition}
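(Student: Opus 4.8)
The plan is to transfer everything, via the Liouville isometry $u\mapsto\sqrt{\tilde A}\,u$ and identity (\ref{sturmtrans}), to the Schr\"odinger form $-l=-d^2/dt^2+q$ on the bounded interval $(0,\pi/2)$, and then combine Weyl's theory of singular Sturm--Liouville operators with the cited results. \textbf{Step 1 (endpoint classification).} From (\ref{q(t)}) and the smoothness of $\chi$ on $[-\pi/2,\pi/2]$ one has $q(t)=(\alpha^2-1/4)\,t^{-2}+O(1)$ as $t\to0^+$ and $q(t)=(\beta^2-1/4)(\pi/2-t)^{-2}+O(1)$ as $t\to(\pi/2)^-$. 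Comparing with the Euler equation $-u''+c\,t^{-2}u=0$, whose solutions are $t^{1/2\pm\sqrt{c+1/4}}$, Weyl's limit point/limit circle dichotomy gives: $0$ is in the limit circle case iff $\alpha^2<1$ (i.e.\ $-1/2\le\alpha<1$) and in the limit point case iff $\alpha\ge1$; likewise $\pi/2$ is limit circle iff $-1/2\le\beta<1$ and limit point iff $\beta\ge1$. This is precisely the trichotomy of the statement. Moreover, by the non-oscillatory behaviour of the endpoints (\cite{Yurko}; see the Appendix), in the limit circle case the solution $\tilde\varphi_{i\sqrt{\mu+\rho^2}}\sim t^{\alpha+1/2}$ near $0$ (resp.\ $W_{\tilde\mu,\beta}\sim(\pi/2-t)^{\beta+1/2}$ near $\pi/2$) lies in $\mathcal D(l)$ near that endpoint, so the brackets $[\,\cdot\,,\,\cdot\,]_l$ appearing in the statement are well defined.

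\textbf{Step 2 (self-adjoint extension and $\mu$-independence).} The deficiency indices of $-l_0$ are $(2,2)$, $(1,1)$ and $(0,0)$ in the three cases. In the last case $-l_0$ is essentially self-adjoint with closure on $\mathcal D(l)$. In the first two, the Glazman--Krein--Naimark decomposition (\cite[Theorem 4.2]{Niessen}, \cite[Theorem 50, page 1478]{DS}, \cite[Proposition 9]{JG}) describes all self-adjoint extensions; the separated ones are obtained by imposing $[y,\psi]_l=0$ at each limit circle endpoint for a fixed nontrivial solution $\psi$ there, and choosing $\psi=\tilde\varphi_{i\sqrt{\mu+\rho^2}}$ at $0$, $\psi=W_{\tilde\mu,\beta}$ at $\pi/2$ singles out the extension in the statement. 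Independence of $\mathcal D$ from $\mu$ and $\tilde\mu$ follows from the Lagrange identity: for $y\in\mathcal D(l)$ the bracket $[y,\psi]_l(0)$ is a Wronskian-type pairing that depends on $\psi$ only through the coefficient of its $\sim t^{\alpha+1/2}$ asymptotics, and any two admissible $\psi=\tilde\varphi_{i\sqrt{\mu+\rho^2}}$ share that asymptotics up to a nonzero scalar, so the condition $[y,\psi]_l(0)=0$ is the same for all of them; likewise at $\pi/2$.

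\textbf{Step 3 (semiboundedness, discreteness, simplicity).} The quadratic form of $-L_0$ satisfies $(-L_0u,u)_{L^2((0,\pi/2),\tilde A(t)dt)}=\int_0^{\pi/2}|u'(t)|^2\tilde A(t)\,dt\ge0$, and the Liouville isometry carries this over to $-l_0$; by the cited results the chosen extension $-l$ is then bounded below (and when $\alpha,\beta>0$ the solutions $\tilde\varphi,W$ are the principal solutions at the endpoints, so $-l$ is the Friedrichs extension and $-l\ge0$). For the spectrum, the interval is bounded and $q$ is bounded below, while at any limit point endpoint one has $\alpha\ge1$ (resp.\ $\beta\ge1$) and hence $q(t)\to+\infty$ there; consequently the resolvent of $-l$ is compact — near a limit circle endpoint the Green kernel is square integrable, and near a limit point endpoint the blow-up of $q$ makes the form domain embed compactly into $L^2(0,\pi/2)$. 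Therefore the spectrum is purely discrete and bounded below, with eigenvalues $\nu_0<\nu_1<\cdots\to+\infty$; these are simple because $-lu=\nu u$ has a two-dimensional solution space on which each endpoint imposes exactly one independent constraint (a boundary condition in the limit circle case, the $L^2$ requirement — which singles out one solution up to scalars — in the limit point case), so every eigenspace is one-dimensional. Undoing the isometry returns the statement for $-l_0$, and hence for $-L_0$.

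The step I expect to be most delicate is Step 2: confirming that $\tilde\varphi_{i\sqrt{\mu+\rho^2}}$ and $W_{\tilde\mu,\beta}$ are legitimate boundary solutions (lying in $\mathcal D(l)$ near the respective endpoints, with the right asymptotics so that the brackets against them define genuine separated self-adjoint boundary conditions) and that these conditions, hence $\mathcal D$, do not depend on $\mu,\tilde\mu$. Once Step 1 and the precise endpoint asymptotics of the two solutions are in place, Steps 2 and 3 reduce to direct applications of \cite{Niessen}, \cite{DS}, \cite{JG} and the standard compact-resolvent criterion on a bounded interval.
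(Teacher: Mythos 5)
Your proposal is correct and follows essentially the same route as the paper, which states this proposition with no written proof beyond citing \cite[Theorem 4.2]{Niessen}, \cite[Theorem 50, page 1478]{DS} and \cite[Proposition 9]{JG}; your Weyl endpoint classification via the Euler comparison $q(t)\sim(\alpha^2-\tfrac14)t^{-2}$, the GKN description of separated self-adjoint extensions, and the compact-resolvent argument are exactly the standard machinery those references encapsulate. The only point to watch is the $\mu$-independence in Step 2, which requires checking $[\tilde\varphi_{i\sqrt{\mu_1+\rho^2}},\tilde\varphi_{i\sqrt{\mu_2+\rho^2}}]_l(0)=0$ (and likewise at $\pi/2$) from the shared $t^{\alpha+1/2}$ asymptotics, but this is precisely the content of \cite[Theorem 4.2]{Niessen} as invoked in the paper.
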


Now we are in a position to prove  Theorem $\ref{spectral}$.
  \begin{proof}[Proof of Theorem $\ref{spectral}$]
The existence of a self adjoint extension of $-L_0$ on $\tilde{D}$ follows from the equation (\ref{sturmtrans}) and the relation between $\mathcal{D}(L)_0$ and $\mathcal{D}(l)_0$. In fact the eigenvalues of $-L$ are same as that of $-l$ as explained earlier and the eigenfunctions are related by the classical Liouville transformation.

 When $\alpha,\beta >0$, it is clear from the above estimates that $\tilde{\varphi}_{i\sqrt{\mu+\rho^2}}$ and $W_{\widetilde{\mu},\beta}$ are principal solutions respectively at $0$ and $\pi/2$ (See Appendix). In fact the boundary conditions in the above proposition correspond to that of with respect to the principal solution as in \cite[Theorem 4.2]{Niessen}. By uniqueness of the principal solutions upto constant multiples the boundary conditions coincide with that of the Friedrich's extension. 
 
 Therefore when $\alpha,\beta>0$ we obtain the Friedrich's extension of $-L$ on $\tilde{D}.$   The lower bound of the Friedrich's extension is same as that of $-L_0$. 
   Therefore the self adjoint extension considered above is also non negative when $\alpha,\beta>0.$
This implies that $\nu_j$'s are non negative. 
Let $\{u_j\}_{j\geq 0}$ be the eigenfunctions of $-l$ such that $-lu_j=\nu_j u_j.$
We further assume that $\|u_j\|_{L^2(0.\pi/2)}=1$ for all $j\geq 0.$ 
It is clear that $\{u_j\}_{j\geq 0}$ form an orthonormal basis of $L^2(0,\pi/2).$
By Proposition \ref{boundary conditions}, it follows that there exist
constants $c_{j}$ and $d_{j}$ such that for all $t\in\left(  0,\pi/2\right)
$,
\[
u_{j}\left(  t\right)  =c_{j}\tilde{\varphi}_{i\sqrt{\nu_j +\rho^2}}\left(  t\right)  =d_{j}%
W_{\nu_{j},\beta}\left(  t\right)  .
\]
This is clearly true for $\alpha<1$ due to the boundary conditions of
Proposition \ref{boundary conditions}, and in the other cases it follows from
the uniqueness of $\tilde{\varphi}_{i\sqrt{\nu_j +\rho^2}}$ among the solutions of the equation
$-lu=\nu_{j}u$ which are in $L^{2}\left(  \left(  0,\varepsilon\right)
\right)  $, and of $W_{\nu_{j},\beta}$ in $L^{2}\left(  \pi/2-\varepsilon
,\pi/2\right)  .$
The constant $c_j$ has polynomial growth in $j$  (see \cite{JG} Lemma $13$, Page $21$).

We define $\Psi_j(t)=\left(\sqrt{\tilde{A}(t)}\right)^{-1} u_j(t), t\in (0,\pi/2).$ Clearly $\{\Psi_j\}_{j\geq 1}$ forms an orthonormal basis of $L^2((0,\pi/2), \tilde{A}(t)dt)$ satisfying 
$-L\Psi_j(t)=\nu_j \Psi_j(t), t\in(0,\pi/2)$.

Therefore we have \begin{equation}\nonumber
\Psi_j(t)=c_{j} \varphi_{i\sqrt{\nu_j +\rho^2}}\left(  it\right). 
\end{equation}

\end{proof}

\begin{corollary}\label{est-psi-cor} For each $m\geq 0$, the function $\Psi_m$ satisfies the following inequality:
\begin{equation}\label{estimate-psi}
\left| \Psi_{m}(t + is)\right| \leq C_m Q(|\sqrt{\nu_m +\rho^2}|)\, |P(t + is)| \,e^{|s|\sqrt{\nu_m +\rho^2}},
\end{equation}
for some polynomials $P$ and $Q$.
\end{corollary}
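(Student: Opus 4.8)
The plan is to combine the relation $\Psi_m(t) = c_m\,\varphi_{i\sqrt{\nu_m+\rho^2}}(it)$ from Theorem \ref{spectral} with the pointwise bound on $\varphi_\lambda$ over the crown domain $\Omega$ given by Theorem \ref{est-phi}, and then to control the constants $c_m$ using the polynomial growth in $j$ asserted in Theorem \ref{spectral}. First I would fix $m$, write $\lambda_m := i\sqrt{\nu_m+\rho^2}$ (interpreting the square root appropriately when $\nu_m+\rho^2\le 0$, which only happens for finitely many $m$), and note that since $\Psi_m(t+is)$ is obtained from $\varphi_{\lambda_m}$ evaluated at $i(t+is) = -s + it$, the point $i(t+is)$ lies in $\Omega$ exactly when $|t|<\frac{\pi}{2}$, so Lemma \ref{holo-extn-phi} guarantees $\Psi_m$ extends holomorphically to a strip and the evaluation makes sense. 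Applying Theorem \ref{est-phi} with $\xi = i(t+is) \in \Omega$ gives
\[
|\varphi_{\lambda_m}(i(t+is))| \le C\,|P(i(t+is))|\,e^{|\Im(\lambda_m\, i(t+is))|}.
\]

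Next I would simplify the exponent. When $\nu_m+\rho^2>0$ we have $\lambda_m = i\sqrt{\nu_m+\rho^2}$ with $\sqrt{\nu_m+\rho^2}>0$ real, so $\lambda_m\cdot i(t+is) = -\sqrt{\nu_m+\rho^2}\,(t+is)$ and $|\Im(\lambda_m\, i(t+is))| = \sqrt{\nu_m+\rho^2}\,|s|$, which is exactly the exponent appearing in (\ref{estimate-psi}). In the finitely many cases where $\nu_m+\rho^2\le 0$, one uses $\Psi_m(t)=c_m\varphi_{\beta_m}(it)$ with $\beta_m=\sqrt{-(\nu_m+\rho^2)}$ real; a parallel computation of $|\Im(\beta_m\, i(t+is))|$ gives a bound of the same shape (with $|\sqrt{\nu_m+\rho^2}|$ interpreted as a modulus), and since there are only finitely many such $m$ they can be absorbed into the constant $C_m$. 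The polynomial $P$ from Theorem \ref{est-phi} is the same for all $m$ (it depends only on the operator, not on the spectral parameter, once $\lambda\ne 0$), so after the change of variable $\xi = i(t+is)$ it becomes a fixed polynomial in $t+is$; I would simply rename it $P$ in (\ref{estimate-psi}).

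Finally, I would fold in the constant $c_m$: by Theorem \ref{spectral} (and the reference to \cite{JG}, Lemma 13) the $c_m$ grow at most polynomially in $m$, and since $\nu_m\to\infty$ with $\nu_m$ of a definite growth order, a polynomial bound in $m$ can be rewritten as a polynomial bound $Q(|\sqrt{\nu_m+\rho^2}|)$; alternatively one records directly that $|c_m|\le Q(|\sqrt{\nu_m+\rho^2}|)$ for a suitable polynomial $Q$. Multiplying the bound on $|\varphi_{\lambda_m}(i(t+is))|$ by $|c_m|$ then yields (\ref{estimate-psi}). The only mild subtlety — and the step I would be most careful about — is the bookkeeping for the finitely many indices $m<j_0$ with $\nu_m+\rho^2\le 0$, together with checking that Theorem \ref{est-phi} is applied with a nonzero spectral parameter; if $\lambda_m=0$ can occur (i.e. $\nu_m=-\rho^2$), that single index must be handled separately using boundedness of $\varphi_0$ on compacta of $\Omega$, which is immediate. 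Everything else is a routine substitution.
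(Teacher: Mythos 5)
Your proposal is correct and follows essentially the same route as the paper: the paper's own (two-line) proof likewise combines the estimate of Theorem \ref{est-phi} applied at $\xi=i(t+is)$ with the relation $\Psi_m(t)=c_m\varphi_{i\sqrt{\nu_m+\rho^2}}(it)$ and the polynomial growth of $c_m$ from Theorem \ref{spectral}. Your write-up simply makes explicit the exponent computation and the bookkeeping for the finitely many indices with $\nu_m+\rho^2\le 0$, which the paper leaves implicit.
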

\begin{proof}
It follows from Theorem \ref{est-phi}, that \begin{equation}
\nonumber
\left|\varphi_{\lambda_1 + i\lambda_2}(t + is)\right|\leq C Q(|\lambda|) |P(t + is)| e^{|\lambda_1 s + \lambda_2 t|}.
\end{equation}
Hence the required inequality will follow from Theorem \ref{spectral}.
\end{proof}

\section{Sine type function}
In this section we consider the case when $\alpha, \beta>0$. We recall that $\nu_n$'s  ($n=0, 1, 2, \cdots $) are eigenvalues of $-L$ with eigenfunctions $\Psi_n$ respectively. In this case (i.e. for $\alpha, \beta >0$), $\nu_0\geq 0$ and $\nu_n>0$ for $n=1, 2, \cdots $. We define the following function:
\begin{equation}
S(z)= \left\{\begin{array}{lll} \pi z \prod_{n=0}^\infty \left(1 + \frac{z^2}{\nu_n+\rho^2}\right) &\text { if } & \nu_0+\rho^2>0,\\
\pi z^3 \prod_{n=1}^\infty \left(1 + \frac{z^2}{\nu_n+\rho^2}\right) &\text { if } & \nu_0+\rho^2=0. \end{array}\right.
\end{equation} 
The asymptotic expansion of $\nu_n's$ is known. More precisely (see \cite{JG} and Theorem $2_{I}$\cite{G}) \begin{equation}\label{asymptote}
	\sqrt{\nu_n}=2n+1+\alpha+\beta -\frac{\Theta}{4n} + O(n^{-2}),\end{equation} where $\Theta= \alpha^2 + \beta^2 -1/2 +\frac{2}{\pi}\int_0^{\frac{\pi}{2}}\chi(t)dt$ for $n\rightarrow\infty.$
The function $S(z)$ has zeros exactly at $\{\pm i\sqrt{\nu_n+\rho^2}\}_{n\geq 0}$ and $0.$ Clearly $\sqrt{\nu_n+\rho^2}$ is also a perturbation of $2n+1+\alpha+\beta$ for $n$ large enough. The function $S(iz)$ is a function of sine type (see  \cite{Levin}).

In order to obtain the main Theorem we need to find a uniform bound on  the residue of $S(z)^{-1}$ at $\{ i\sqrt{\nu_n+\rho^2}\}_{n\geq 0}.$
We prove the following theorem:
\begin{theorem}
Let \begin{equation}
\label{defn-S}
 S_1(z)=\frac{z^2}{S(z)}.
\end{equation}
Let $d_n$ be the residue of $S_1$ at $i\sqrt{\nu_n+\rho^2}$. Then the following holds:
\begin{enumerate}
\item \begin{equation}\label{estimate-S}
|S_1(z)|\asymp  \left\{\begin{array}{lll}|z|^2 e^{-\frac{\pi}{2} |\Re z|}
  &\text { if } & \nu_0+\rho^2>0,\\
 e^{-\frac{\pi}{2} |\Re z|}
   &\text { if } & \nu_0+\rho^2=0. \end{array}\right.\end{equation}
\item \begin{equation}
\label{estimate-residue}
|d_n|\leq c |\nu_n + \rho^2| \text{ for all } n ~~ \text{large}.
\end{equation}
\end{enumerate}
\end{theorem}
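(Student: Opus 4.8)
The plan is to treat $S(iz)$ as a sine-type function (in the sense of Levin) and extract from the canonical product the two pieces of information we need: a growth/decay estimate along horizontal lines, and a lower bound for $|S'|$ at the zeros, from which the residue estimate follows. First I would reduce everything to the ``model'' case. Using the asymptotics \eqref{asymptote}, write $\sqrt{\nu_n+\rho^2} = 2n+1+\alpha+\beta + \varepsilon_n$ with $\varepsilon_n = O(n^{-1})$, and compare $S(z)$ with the explicit product
\begin{equation}\nonumber
S_0(z) = \pi z \prod_{n=0}^\infty \left(1 + \frac{z^2}{(2n+1+\alpha+\beta)^2}\right),
\end{equation}
which (up to a normalizing constant and a shift in the parameter) is a classical Gamma-quotient; indeed $\prod_{n\geq 0}(1+z^2/(2n+1+a)^2)$ is expressible through $\Gamma\!\left(\tfrac{a}{2}+\tfrac{iz}{2}+\tfrac12\right)\Gamma\!\left(\tfrac{a}{2}-\tfrac{iz}{2}+\tfrac12\right)$, whose behaviour is governed by Stirling's formula. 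This gives $|S_0(iz)| \asymp |z| \cosh(\tfrac{\pi}{2}\Re z)$ away from the zeros, hence $|z^2/S_0(z)| \asymp |z| e^{-\frac{\pi}{2}|\Re z|}$ in the $\nu_0+\rho^2>0$ case; the cubic-product case differs only by one extra factor of $z$, giving the stated $e^{-\frac{\pi}{2}|\Re z|}$ with no polynomial prefactor (the $z^2$ in $S_1$ cancels two of the three $z$'s).

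Second, I would control the perturbation. The ratio $S(z)/S_0(z) = \prod_{n}\frac{1+z^2/(\nu_n+\rho^2)}{1+z^2/(2n+1+\alpha+\beta)^2}$ converges because $\left|\frac{1}{\nu_n+\rho^2} - \frac{1}{(2n+1+\alpha+\beta)^2}\right| = O(n^{-3})$ by \eqref{asymptote}, so the logarithm of the product is an absolutely convergent series; moreover this ratio is bounded above and below by positive constants uniformly on the region $\{|\Im z|\le \text{fixed}\}$ minus small discs around the zeros, and more carefully one checks it tends to $1$ as $|\Re z|\to\infty$. A routine (but slightly delicate) minimum-modulus argument on small circles of fixed radius around each zero $i\sqrt{\nu_n+\rho^2}$, using that the zeros are uniformly separated (again from \eqref{asymptote}), upgrades the pointwise bound to the two-sided estimate $|S_1(z)|\asymp$ (RHS of \eqref{estimate-S}) valid for $z$ in a horizontal strip at bounded distance from the zeros, which is the content of part (1).

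Third, for part (2): the residue $d_n$ of $S_1$ at the simple zero $z_n := i\sqrt{\nu_n+\rho^2}$ is $d_n = z_n^2 / S'(z_n)$. Writing $S(z) = (z-z_n)(z+z_n)\,\pi z\prod_{m\neq n}(1+z^2/(\nu_m+\rho^2))$ in the first case, one gets $S'(z_n) = 2z_n\cdot \pi z_n \prod_{m\neq n}(1+z_n^2/(\nu_m+\rho^2))$, and I would estimate the remaining product from below by a positive constant uniformly in $n$. This last lower bound is again a consequence of the asymptotics: $\prod_{m\neq n}\left|1 - \frac{\nu_n+\rho^2}{\nu_m+\rho^2}\right|$ is, after pairing terms, comparable to $|S_0'(i(2n+1+\alpha+\beta))|/(\text{leading factors})$ times a convergent correction, and for the model Gamma-product this is bounded below because $S_0(iz)$ is sine-type (its derivative at zeros is bounded away from $0$). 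Hence $|d_n| = |z_n|^2/|S'(z_n)| \le c\,|z_n|^2/|z_n|^2\cdot(\text{const}) $—wait, more precisely $|S'(z_n)| \gtrsim |z_n|\cdot|z_n| = |\nu_n+\rho^2|$, so $|d_n| \lesssim |\nu_n+\rho^2|^2/|\nu_n+\rho^2| = |\nu_n+\rho^2|$, giving \eqref{estimate-residue} for $n$ large; the cubic case is identical with an extra $z_n$ that cancels against $S_1$'s numerator.

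The main obstacle is the uniform (in $n$) lower bound on the ``deleted'' product $\prod_{m\neq n}(1+z_n^2/(\nu_m+\rho^2))$ — equivalently, showing the zeros of $S(iz)$ are not just separated but that $S$ has a genuine sine-type lower bound. I expect to handle this by the comparison-with-$S_0$ strategy above: the ratio $S/S_0$ and its logarithmic derivative are controlled by $\sum_n |\varepsilon_n|/n = \sum O(n^{-2}) < \infty$, so all the estimates for $S$ follow from the corresponding (classical, explicit) estimates for the Gamma-quotient $S_0$. I would cite \cite{Levin} for the general sine-type machinery and \cite{JG}, \cite{G} for the precise form of \eqref{asymptote} used to make the comparison series summable.
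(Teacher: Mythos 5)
Your overall strategy---treat $S$ as a perturbation of a sine-type canonical product with zeros $2n+1+\alpha+\beta$, control the perturbation via the summability coming from \eqref{asymptote}, and deduce the residue bound from a lower bound on the derivative of $S$ at its zeros---is the same idea the paper uses, but implemented with different tools. The paper sets $M(z)=S(iz)/(\pi z)$, compares it with the explicit sine-type function $M_0(z)=\frac{2\sin(\pi(z-\rho_0)/2)}{\pi(z-\rho_0)}$, and invokes the Levin--Ostrovski\u{\i} expansion theorem (\cite[Theorem 2, p.~86]{Levin}) to write $M=c_0M_0+z^{-1}(\cdots)+z^{-2}(\cdots)+z^{-2}f_2$; both the pointwise estimate \eqref{estimate-S} and the lower bound on $\mu_nM'(\mu_n)$ (hence \eqref{estimate-residue}) are read off from that one expansion. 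You replace this by a hand-made factor-by-factor comparison of the two canonical products plus Stirling for the Gamma-quotient model. That is a legitimate and arguably more self-contained route, but note that the two estimates you defer as ``routine but slightly delicate''---the uniform two-sided control of $S/S_0$ away from the zeros, and above all the uniform-in-$n$ lower bound on the deleted product $\prod_{m\neq n}\bigl|1-\tfrac{\nu_n+\rho^2}{\nu_m+\rho^2}\bigr|$---are exactly the content that the citation to \cite{Levin} supplies in the paper. As it stands your argument identifies the key difficulty correctly but does not discharge it.

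Two concrete slips. First, the polynomial power in part (1): carrying out Stirling on $\prod_{n\geq0}\bigl(1+z^2/(2n+1+a)^2\bigr)=\Gamma(b)^2/\bigl(\Gamma(b+\tfrac{iz}{2})\Gamma(b-\tfrac{iz}{2})\bigr)$ with $b=\tfrac{1+a}{2}$, $a=\alpha+\beta$, gives $|S_0(z)|\asymp|\Re z|^{1-\alpha-\beta}e^{\frac{\pi}{2}|\Re z|}$ on horizontal strips, hence $|z^2/S_0(z)|\asymp|z|^{1+\alpha+\beta}e^{-\frac{\pi}{2}|\Re z|}$ --- not the $|z|e^{-\frac{\pi}{2}|\Re z|}$ you assert, and not the $|z|^2$ of \eqref{estimate-S} either (check your claim against the explicit cases $a=0$, giving $\pi z\cosh(\pi z/2)$, and $a=1$, giving $2\sinh(\pi z/2)$). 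Only the exponential factor matters for the application in Theorem \ref{Ramanujan-per}, but a proof of \eqref{estimate-S} as literally stated must produce the stated power, so this needs to be reconciled rather than passed over. Second, your factorization $S(z)=(z-z_n)(z+z_n)\,\pi z\prod_{m\neq n}(\cdots)$ drops the normalizing denominator of the $n$-th factor, since $1+\tfrac{z^2}{\nu_n+\rho^2}=\tfrac{(z-z_n)(z+z_n)}{\nu_n+\rho^2}$; the correct computation gives
\begin{equation}
\nonumber
S'(z_n)=-2\pi\prod_{m\neq n}\Bigl(1-\tfrac{\nu_n+\rho^2}{\nu_m+\rho^2}\Bigr),\qquad d_n=\frac{z_n^2}{S'(z_n)}=\frac{-(\nu_n+\rho^2)}{S'(z_n)},
\end{equation}
which explains why your residue arithmetic came out inconsistent mid-sentence. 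The bound \eqref{estimate-residue} then follows precisely from the deleted product being bounded below uniformly in $n$ (it in fact grows like $n$, as one sees from the model $\cosh$ case), and that uniform lower bound is the step you must actually prove --- either by your comparison scheme made quantitative, or by appealing to the sine-type machinery as the paper does.
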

\begin{proof} 
Let us put $\rho_0=1+\alpha+\beta.$ By using equation (\ref{asymptote}) we can write \begin{equation} \nonumber \sqrt{\nu_n+\rho^2}=2n+\rho_0 + \frac{2\rho^2-\Theta}{4n} + O(n^{-2}), \end{equation} for $n\geq M,$ $M$ large enough. We first deal with the case $\nu_0+\rho^2>0.$ Let $\mu_n:= \sqrt{\nu_n+\rho^2}.$    We define \begin{equation} \nonumber 
M(z)=\frac{S(iz)}{\pi z} .
\end{equation}
 Then $M$ has zeros of order one precisely on the set  $\{\pm \mu_n\}_{n\geq 0}.$
The residue of $S_1$ at $i\mu_n$ is given by \begin{equation}\nonumber 
d_n:=\lim_{z\rightarrow i\mu_n}(z-i\mu_n)S_1(z).
\end{equation} Since $S_1(z)=z^2(-i\pi z M(-iz))^{-1}$, it is easy to see that $$d_n= -\mu_n^2\lim_{-iz\rightarrow \mu_n}\frac{(-iz-\mu_n)}{-\pi zM(-iz)}.$$ As $\mu_n$ is a zero of $M(z),$ $$d_n=\frac{\mu_n^2}{i\pi \mu_n M'(\mu_n)}.$$
In order to show $d_n\times\mu_n^{-2}$ is bounded for large $n,$ it is enough to show that $(zM'(z))_{ z=\mu_n}$ is bounded below as $n\rightarrow\infty.$
We define \begin{equation}\nonumber 
M_0(z)= \frac{2\sin (\pi(z-\rho_0)/2)}{\pi (z-\rho_0)}.
\end{equation} The function $M_0$ is a sine type function and exponential of type $\pi/2.$
Clearly the zeros of $M$ are the perturbation of the zeros of $M_0$. Indeed from (\ref{asymptote}) it is clear that the perturbation is of order 2. 
Using the notation of \cite[p.86]{Levin}, let $\lambda_k=2k+\rho_0$ and $\psi_k=-\frac{2\rho^2-\Theta}{4k} + O(k^{-2}).$
 By   \cite[Theorem 2, p.86]{Levin} with $n=2$ we have the asymptotic expansion of $M$ in terms of $M_0$ 
\begin{equation}\label{sineasymptote}M(z)= c_0 M_0(z) + z^{-1}(c_1 M_0(z)+ c_2 M_0'(z))+ z^{-2}(d_0 M_0(z)+ d_1 M_0'(z)+ M_0''(z)) + z^{-2}f_2(z),\end{equation}
where $f_2$ is a holomorphic function of exponential type $\leq\frac{\pi}{2}$ and $c_i, d_i, i=0,1,2$ are constants. For getting lower bound on $zM'(z)$ at $\mu_n's$ for $n$ large we use the above expression of $M$ in terms of $M_0.$
On calculating we get that \begin{equation}\nonumber M_0'(z)= \frac{\cos(\pi(z-\rho_0/2))}{z-\rho_0}-\frac{2\sin (\pi(z-\rho_0)/2}{\pi(z-\rho_0)^2}.\end{equation}
Using the fact that $\mu_n$ is a very small perturbation of $2n+\rho_0,$ it is clear that $(zM_0'(z))_{ z=\mu_n}$ is bounded from below for $n$ large enough.
On computing the derivative of $M$ from equation (\ref{sineasymptote}) it is easy to see that the most dominating term of $zM'(z)$ at $z=\mu_n,n\geq M$ is $\mu_nM_0'(\mu_n)$ and the rest of the terms are of the order $\mu_n^{-1}$ which can be made as small as possible for large M. More precisely $$|\mu_n M'(\mu_n)|\asymp |\mu_nM_0'(\mu_n)| + O(\mu_n ^{-1})$$ As $\mu_n M_0'(\mu_n)$ is bounded below for $n\rightarrow\infty,$ it implies that  $(zM'(z))_{ z=\mu_n}$ is bounded from below as $n\rightarrow\infty.$
The pointwise estimate (\ref{estimate-S}) of $S_1$ is clear from the asymptotic expansion (\ref{sineasymptote}) of $M$ above.
When $\nu_0+\rho^2=0$ we define $M(z)=\frac{S(iz)}{z^3}$ and then follow the above proof in a similar manner to get the desired results.
\end{proof}
\section{Main theorem}
In this section we prove Ramanujan's Master theorem for the case when $\alpha, \beta>0$. Let $A, p, \delta$ be real numbers such that $A<\pi/2, \,p>0$ and $0<\delta\leq 1$. Let \begin{equation}\nonumber
\mathcal H(\delta)=\{\lambda\in\C\mid \Im\lambda>-\delta\},
\end{equation} and 
\begin{equation}\nonumber
\mathcal H(A, p, \delta)=\left\{a:\mathcal H(\delta)\rightarrow\C \text{ holomorphic }\mid |a(\lambda)|\leq C \,e^{-p(\Im\lambda) + A|\Re\lambda|} \text{ for all }\lambda\in\mathcal H(\delta)\right\}.
\end{equation}
We recall that the function $S_1$ (defined in (\ref{defn-S})) has simple poles at $\lambda= i\sqrt{\nu_m + \rho^2}, m=0,1, 2, \cdots$ where $\nu_0\geq 0$ and $\nu_m> 0$ for all $m=1, 2, \cdots$. We also recall that  $d_m$ is the residue of $S_1$ at $\lambda=i\sqrt{\nu_m + \rho^2}$ for $m=0,1, 2, \cdots$.  
\begin{theorem}\label{Ramanujan-per}
Let $\alpha, \beta>0$ and let $a\in \mathcal H(A, p, \delta)$. Then the following holds:
\begin{enumerate}
\item The Fourier series \begin{equation}\label{fourier-series}
 f(t)=2\pi i\sum_{m=0}^\infty \frac{d_m}{c_m} \, a(i\sqrt{\nu_m + \rho^2})\Psi_{m}(t),
\end{equation} converges uniformly   on compact subsets of $\Omega_p:=\{t\in\C\mid |\Re t|<\frac{\pi}{2}, |\Im t|<p\}$ and hence holomorphic there.

\item Let $0\leq \sigma<\delta$. Then the function $f$ can also be expressed in the integral form for $t\in\R$ with $|t|<p$ as
\begin{equation}\label{integral-f}
f(it)=\int_{-\infty-i\sigma}^{\infty -i\sigma} \left(a(\lambda) b(\lambda) + a(-\lambda) b(-\lambda)\right)\varphi_{\lambda}(t)\,\frac{d\lambda}{c(\lambda)c(-\lambda)},
\end{equation}
where the function $b$ is defined by
\begin{equation}
\label{defn-b}
\frac{b(\lambda)}{c(\lambda)c(-\lambda)}=S_1(\lambda).
\end{equation}
The integrals defined above are independent of $\sigma$ and extends as a holomorphic function to a neighboorhood  $\{z\in\C\mid |\Re z|<\frac{\pi}{2}-A\}$ of $i\R$.

\item The extension of $f$ to $i\R$ satisfies, for all $\lambda\in \mathbb R$ 
\begin{equation}\label{ab-lambda}
\int_{\R}f(it) \, \varphi_\lambda(t) A(t)\,dt= a(\lambda) b(\lambda) + a(-\lambda) b(-\lambda).
\end{equation}   
\end{enumerate}
\end{theorem}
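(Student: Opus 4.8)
The plan is to establish the three parts in sequence, using the sine-type function estimates of the previous section as the engine.

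For part (1), I would start from the growth hypothesis $|a(\lambda)|\le C e^{-p(\Im\lambda)+A|\Re\lambda|}$ and evaluate it at $\lambda=i\sqrt{\nu_m+\rho^2}$, which lies in $\mathcal H(\delta)$ since $\sqrt{\nu_m+\rho^2}>0$; this gives $|a(i\sqrt{\nu_m+\rho^2})|\le C e^{-p\sqrt{\nu_m+\rho^2}}$. Combining this with the residue bound $|d_m|\le c\,|\nu_m+\rho^2|$ from \eqref{estimate-residue}, the polynomial growth of $c_m^{-1}$ from Theorem \ref{spectral}, and the pointwise bound on $\Psi_m$ from Corollary \ref{est-psi-cor}, namely $|\Psi_m(t+is)|\le C_m Q(\sqrt{\nu_m+\rho^2})|P(t+is)|e^{|s|\sqrt{\nu_m+\rho^2}}$, one sees that the $m$-th term of \eqref{fourier-series} is bounded on a compact subset of $\Omega_p$ by (polynomial in $\sqrt{\nu_m+\rho^2}$) times $e^{-(p-|\Im t|)\sqrt{\nu_m+\rho^2}}$. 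Since $|\Im t|<p$ on $\Omega_p$ and $\sqrt{\nu_m+\rho^2}\sim 2m$ by the asymptotics \eqref{asymptote}, the series converges uniformly on compacta of $\Omega_p$, so $f$ is holomorphic there. I should also track the growth of $C_m$ (this is the constant from Theorem \ref{est-phi} applied to $\varphi_{i\sqrt{\nu_m+\rho^2}}$) to make sure it too is at most polynomial in $m$; this is the one slightly delicate bookkeeping point.

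For part (2), I would identify the two expressions for $f$ via a residue/contour argument. Starting from the integral in \eqref{integral-f} along the line $\Im\lambda=-\sigma$, I would push the contour downward (equivalently, close it in the lower half-plane using the decay of $\varphi_\lambda(t)$ for $|t|<p$ against the exponential factors) and pick up residues at the poles of $\lambda\mapsto a(\lambda)b(\lambda)/(c(\lambda)c(-\lambda)) = a(\lambda)S_1(\lambda)$, which by \eqref{defn-b} and \eqref{defn-S} sit exactly at $\lambda=-i\sqrt{\nu_m+\rho^2}$ (and correspondingly the $a(-\lambda)b(-\lambda)$ term contributes poles at $\lambda=+i\sqrt{\nu_m+\rho^2}$). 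Using $\varphi_\lambda=\varphi_{-\lambda}$, the residue at each such pole produces a term proportional to $d_m\,a(i\sqrt{\nu_m+\rho^2})\varphi_{i\sqrt{\nu_m+\rho^2}}(it)$, and via \eqref{relation-phi-psi} this is exactly $(d_m/c_m)a(i\sqrt{\nu_m+\rho^2})\Psi_m(t)$ up to the constant $2\pi i$; summing recovers \eqref{fourier-series}. Independence of $\sigma$ for $0\le\sigma<\delta$ follows because $a$ is holomorphic on all of $\mathcal H(\delta)$ and $S_1/(cc)$ has no poles in the strip $-\delta<\Im\lambda<0$ (the poles of $S_1$ are at $\pm i\sqrt{\nu_m+\rho^2}$ with $\sqrt{\nu_m+\rho^2}\ge\rho$, and for the holomorphicity of the $c$-function one invokes Theorem \ref{c-holomorphic} and Corollary \ref{estimate-c}, the latter giving the polynomial bound on $c$ in the strip $|\Im\lambda|\le\delta<1/2$ needed to justify shifting). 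The holomorphic extension of the integral to $\{|\Re z|<\pi/2-A\}$ comes from Theorem \ref{est-phi}: writing $\varphi_\lambda(z)$ and estimating $|\varphi_\lambda(z)|\le C|P(z)|e^{|\Im(\lambda z)|}$, the integrand along $\Im\lambda=-\sigma$ decays like $e^{-(\pi/2-A-|\Re z|)|\Re\lambda|}$ once we use the $e^{-(\pi/2)|\Re\lambda|}$ decay of $S_1$ from \eqref{estimate-S} against the $e^{A|\Re\lambda|}$ growth of $a$ and the $e^{|\Re z||\Re\lambda|}$ growth of $\varphi_\lambda(z)$, so the integral converges locally uniformly and defines a holomorphic function on that neighbourhood of $i\R$.

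For part (3), the identity \eqref{ab-lambda} is essentially the Sturm--Liouville Fourier inversion read backwards. Given $\lambda\in\R$, I would compute $\int_\R f(it)\varphi_\lambda(t)A(t)\,dt$ by substituting the integral representation \eqref{integral-f} for $f(it)$ and interchanging the order of integration (justified by the absolute convergence established in part (2) together with the decay of $\varphi_\lambda(t)$ for real $\lambda$). The inner integral $\int_\R \varphi_\mu(t)\varphi_\lambda(t)A(t)\,dt$ is, in the distributional sense, proportional to $c(\lambda)c(-\lambda)$ times a delta mass at $\mu=\pm\lambda$ — this is the statement underlying the Plancherel formula $f(t)=\frac1{4\pi}\int_\R\widehat f(\lambda)\varphi_\lambda(t)|c(\lambda)|^{-2}d\lambda$ quoted in the preliminaries — so the $\mu$-integral collapses and leaves $a(\lambda)b(\lambda)+a(-\lambda)b(-\lambda)$ after using \eqref{defn-b}. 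Alternatively, and more cleanly, I would take \eqref{integral-f} as expressing $f(it)$ as the inverse Sturm--Liouville transform of the even function $\lambda\mapsto a(\lambda)b(\lambda)+a(-\lambda)b(-\lambda)$ (up to normalisation), observe that this function lies in an appropriate Schwartz-type class on the relevant strip because of the exponential decay from \eqref{estimate-S} beating the growth of $a$, and then simply apply the inversion theorem (the $L^p$-Schwartz isomorphism stated before) to read off $\widehat{f(i\cdot)}=a(\cdot)b(\cdot)+a(-\cdot)b(-\cdot)$.

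\textbf{Main obstacle.} I expect the crux to be the contour-shift in part (2): one must justify moving the line of integration past infinitely many poles down to (or through) the real axis and control the tail, which requires the precise interplay between the $e^{-(\pi/2)|\Re\lambda|}$ decay of $S_1$ from \eqref{estimate-S}, the $e^{A|\Re\lambda|}$ growth of $a$ with $A<\pi/2$, and the growth of $\varphi_\lambda$ from Theorem \ref{est-phi}; keeping the residue sum absolutely convergent (this is exactly where \eqref{estimate-residue}, the polynomial growth of $c_m$, and the asymptotics \eqref{asymptote} all get used simultaneously) is the technical heart of the argument. The secondary difficulty is making the formal manipulation in part (3) rigorous, i.e. pinning down in what sense $\int_\R\varphi_\mu\varphi_\lambda A\,dt$ acts as a reproducing kernel; invoking the Paley--Wiener/Schwartz isomorphism from the preliminaries rather than manipulating the kernel directly is the safer route.
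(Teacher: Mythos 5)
Your treatment of parts (1) and (3) follows the paper's own argument: in (1) you combine exactly the same ingredients (Corollary \ref{est-psi-cor}, the residue bound \eqref{estimate-residue}, the polynomial growth of $c_m$, and the decay $|a(i\sqrt{\nu_m+\rho^2})|\le Ce^{-p\sqrt{\nu_m+\rho^2}}$), and your ``cleaner'' route for (3) --- showing that $\lambda\mapsto a(\lambda)b(\lambda)+a(-\lambda)b(-\lambda)$ lies in $\mathcal S_2(\R)_e$ and then invoking the inversion theorem --- is precisely what the paper does. One point you omit in (3) that the paper must (and does) address: since $c$ has a simple pole at $\lambda=0$ when $\rho>0$, the function $b(\lambda)=c(\lambda)c(-\lambda)S_1(\lambda)$ has a simple pole at the origin, and smoothness of the sum there is obtained only by writing it as $b(\lambda)\left(a(\lambda)-a(-\lambda)\right)$, using $b(-\lambda)=-b(\lambda)$ and the vanishing of $a(\lambda)-a(-\lambda)$ at $0$. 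Without that cancellation the claimed membership in $\mathcal S_2(\R)_e$ fails at the origin, so this is not optional bookkeeping.

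The genuine gap is in part (2): you close the contour in the lower half-plane and propose to collect residues of $a(\lambda)S_1(\lambda)$ at $\lambda=-i\sqrt{\nu_m+\rho^2}$. This step fails. The function $a$ is holomorphic only on $\mathcal H(\delta)=\{\Im\lambda>-\delta\}$ with $\delta\le 1$, so $a$ is not even defined at $-i\sqrt{\nu_m+\rho^2}$ (note $\sqrt{\nu_m+\rho^2}\sim 2m$ by \eqref{asymptote}), and the contour cannot be pushed below $\Im\lambda=-\delta$; likewise, at a pole $\lambda=+i\sqrt{\nu_m+\rho^2}$ the factor $a(-\lambda)$ in the second term is evaluated outside $\mathcal H(\delta)$, so your assignment of poles to terms is backwards in both cases. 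Moreover the hypothesis $|a(\lambda)|\le Ce^{-p\Im\lambda+A|\Re\lambda|}$ gives decay of $a$ as $\Im\lambda\to+\infty$ and growth as $\Im\lambda\to-\infty$, while $|\varphi_\lambda(t)|\lesssim e^{|\Im\lambda|\,|t|}$ grows in both directions; the product is controlled, like $e^{-(p-|t|)\Im\lambda}$, only in the upper half-plane. The paper accordingly closes the contour \emph{upward}, over the rectangle with vertices $(\pm R,-\sigma)$ and $(\pm R,R)$, kills the top and side integrals using $A<\pi/2$, $|t|<p$ and the $e^{-\frac{\pi}{2}|\Re\lambda|}$ decay of $S_1$ from \eqref{estimate-S}, and picks up the residues of $a(\lambda)S_1(\lambda)\varphi_\lambda(t)$ at $\lambda=+i\sqrt{\nu_m+\rho^2}$, which are exactly the terms $2\pi i\,\frac{d_m}{c_m}\,a(i\sqrt{\nu_m+\rho^2})\Psi_m(it)$ of \eqref{fourier-series}. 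Your final residue formula is the correct one, but it can only be reached by the upward closing; as written, your contour argument evaluates $a$ where it does not exist and has no decay with which to discard the far part of the contour.
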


\begin{proof} We shall first prove the theorem for the case when $\nu_0 +\rho^2>0$. Proof of the other case, i.e. the case when $\nu_0 + \rho^2=0$ will follow similarly. 

So let us assume that $\nu_0 +\rho^2>0$.
We first prove $(1)$. Using Corollary \ref{est-psi-cor} and equation (\ref{estimate-residue})  we get that 
$$\begin{array}{lll}
&&\sum_{m=0}^\infty \frac{|d_m|}{|c_m|} |a(i\sqrt{\nu_m + \rho^2})| \, |\Psi_{m}\left((t+ is) \right)| \\ 
&\leq&
C \sum_{m=0}^\infty \, (\nu_m + \rho^2) Q(|\sqrt{\nu_m +\rho^2}|) e^{-p \sqrt{\nu_m + \rho^2}} e^{|s| \sqrt{\nu_m + \rho^2}} |P(t + is)|\\
&<& \infty \text{ if } |s|<p.

\end{array}$$
Here we have used the fact that $c_m \rightarrow \infty$ as $m\rightarrow \infty$.

Now we shall prove $(2)$. Let $R,\sigma >0$. Let $\Gamma_1$ be the straight line joining $(-R, -\sigma)$ and $(R, -\sigma)$, $\Gamma_2$ be a straight line joining $(R, -\sigma)$ and $(R, R)$, $\Gamma_3$ be a strainght line joining $(R, R)$ and $(-R, R)$, $\Gamma_4$ be a strainght line joining $(-R, R)$ and $(-R, -\sigma)$. Let $\Gamma=\Gamma_1\cup \Gamma_2\cup \Gamma_3\cup \Gamma_4$ be the rectangle with anticlockwise direction.

Let $$\begin{array}{lll}
I &=& \int_\Gamma a(\lambda) b(\lambda) \varphi_\lambda(t) \frac{1}{c(\lambda) c(-\lambda)}\,d\lambda \\ \\
&=& \int_{\Gamma_1} a(\lambda) \varphi_\lambda(t)\frac{b(\lambda)}{c(\lambda) c(-\lambda)}\,d\lambda + \int_{\Gamma_2} a(\lambda) \varphi_\lambda(t)\frac{b(\lambda)}{c(\lambda) c(-\lambda)}\,d\lambda \\  \\
&+& \int_{\Gamma_3} a(\lambda) \varphi_\lambda(t)\frac{b(\lambda)}{c(\lambda) c(-\lambda)}\,d\lambda+ \int_{\Gamma_4} a(\lambda) \varphi_\lambda(t)\frac{b(\lambda)}{c(\lambda) c(-\lambda)}\,d\lambda.
\end{array}$$

We claim that 
\begin{equation}
\nonumber
\int_{\Gamma_i} a(\lambda) \varphi_\lambda(t)\frac{b(\lambda)}{c(\lambda) c(-\lambda)}\,d\lambda \rightarrow 0
\end{equation}
as $R\rightarrow \infty$, for $i=2, 3, 4$. Suppose that  the claim is true.  We observe that the function \begin{equation}\nonumber
\frac{b(\lambda)}{c(\lambda) c(-\lambda)}=S_1(\lambda),
\end{equation} has simple poles at $\lambda= i \sqrt{\nu_m +\rho^2}$ for $m=0,1, 2, \cdots$  in side the rectangle $\Gamma$. Therefore from Cauchy's theorem it follows that 
$$\begin{array}{lll}
\int_{-\infty-i\sigma}^{\infty-i\sigma} a(\lambda) b(\lambda) \varphi_\lambda(t) \frac{1}{c(\lambda) c(-\lambda)}\,d\lambda 
&=& 2\pi i \sum_{m=0}^\infty a\left(i\sqrt{\nu_m +\rho^2}\right) \varphi_{i\sqrt{\nu_m + \rho^2}} (t) \text{ Res}_{\lambda=i\sqrt{\nu_m +\rho^2}} S_1(\lambda) \\ \\
&=&2\pi i \sum_{m=0}^\infty \frac{d_m}{c_m} \,a\left(i\sqrt{\nu_m +\rho^2}\right) \Psi_{m}(it).
\end{array}$$
We shall prove the above claim.
Using (\ref{estimate-S}) we have for $|t|<p$, $$\begin{array}{lll}
\int_{\Gamma_2} \left| a(\lambda)\varphi_\lambda(t) \frac{b(\lambda)}{c(\lambda) c(-\lambda)} \,d\lambda\right| &=& \int_{s=-\sigma}^R |a(R +is)|\, |\varphi_{R + is}(t)| \, |S_1(R+is)|\,ds\\
&\leq& \int_{s=-\sigma}^R e^{-ps + AR} e^{|st|} e^{-\frac{\pi}{2} R} (R^2 + s^2)\, ds \\
&=& e^{(A-\pi/2) R} \int_{s=-\sigma}^R e^{-sp + |s|\,|t|}  (R^2 + s^2) \,ds\\
&\rightarrow& 0 \text{ as } R\rightarrow \infty.

\end{array}$$
The last line follows as $A<\pi/2$.
On $\Gamma_3$ for $|t|<p$,
$$\begin{array}{lll}
\int_{\Gamma_3} \left| a(\lambda)\varphi_\lambda(t) \frac{b(\lambda)}{c(\lambda) c(-\lambda)}\, d\lambda\right| &=& \int_{s=-R}^R |a(s+iR)|\, |\varphi_{s+ iR}(t)| \, |S_1(s+iR)|\,ds\\
&\leq& \int_{s=-R}^R e^{-pR + A|s|} e^{R|t|} e^{-\pi/2 |s|} (R^2 + s^2)\, ds \\
&=& e^{-R(p-|t|)}\int_{s=-R}^R e^{(A-\pi/2)|s|} (R^2 + s^2)\,ds\\
&\rightarrow& 0 \text{ as } R\rightarrow \infty.

\end{array}$$

For $\Gamma_4$, for $|t|<p$,
$$\begin{array}{lll}
\int_{\Gamma_4} \left| a(\lambda)\varphi_\lambda(t) \frac{b(\lambda)}{c(\lambda) c(-\lambda)}\,d\lambda\right| &=& \int_{s=-\sigma}^R |a(-R +is)|\, |\varphi_{-R + is}(t)| \, |S_1(-R+is)|\,ds\\
&\leq& \int_{s=-\sigma}^R e^{-ps + AR} e^{|st|} e^{-\pi/2 R} (R^2 + s^2)\, ds \\
&=& e^{(A-\pi/2) R} \int_{s=-\sigma}^R e^{-sp + |s|\,|t|} (R^2 + s^2)\,ds\\
&\rightarrow& 0 \text{ as } R\rightarrow \infty.

\end{array}$$
Using the same method we can show that the right hand side of (\ref{integral-f}) is independent of $0<\sigma<\delta$.

Now we consider 
$$\begin{array}{lll}
\int_{-\infty-i\sigma}^{\infty -i\sigma} \left| a(\lambda) b(\lambda) \varphi_\lambda(t+ is) \frac{d\lambda}{c(\lambda) c(-\lambda)}\right| &=&\int_{-\infty-i\sigma}^{\infty -i\sigma} \left|a(\lambda)\varphi_\lambda(t+ is) S_1(\lambda)\,d\lambda\right|\\
&\leq& \int_\R \left|a(y-i\sigma)\varphi_{y-i\sigma}(t+ is) S_1(y-i\sigma) (y^2 + \sigma^2)\,dy\right| \\
&\leq & \int_\R e^{p\sigma + A|y|} e^{|ys-t\sigma|}e^{-\pi/2 |y|} (y^2 + \sigma^2) \,dy.

\end{array}$$
This shows that the integral defined above is finite if $|s|<\pi/2-A$ and hence holomorphic when $|s|<\pi/2-A$.

We observe that the equation (\ref{integral-f}) is true for every $0<\sigma<\delta$ and the right hand side of (\ref{integral-f}) is independent of $\sigma$.  Hence using the fact that $c(-\lambda)=\overline{c(\lambda)}, \lambda\in\R$ we have 
\begin{equation}\label{integral-f-sigma0}
f(it)=\int_\R \left(a(\lambda) b(\lambda) + a(-\lambda) b(-\lambda)\right)\varphi_{\lambda}(t) |c(\lambda)|^{-2}\,d\lambda.
\end{equation}
To prove $(3)$ if we prove that the map \begin{equation}\nonumber
\lambda\mapsto a(\lambda) b(\lambda) + a(-\lambda) b(-\lambda),
\end{equation} is in $\mathcal S_2(\R)_e$, then using the inversion formula we will have (\ref{ab-lambda}). To show that the map \begin{equation}\nonumber
\lambda\mapsto a(\lambda) b(\lambda) + a(-\lambda) b(-\lambda),
\end{equation} is in $\mathcal S_2(\R)_e$, it is enough to show (using Cauchy's theorem) that the map is holomorphic on $\R+i[-\delta, \delta]$ and  for each $N\in\N$,
\begin{equation}
\label{ab-schwartz}
\sup_{\lambda\in \R+i[-\delta, \delta]} (1 + |\lambda|)^N |a(\lambda) b(\lambda)| <\infty,
\end{equation}
for some $\delta>0$.
We have \begin{equation}\nonumber
b(\lambda)=c(\lambda) c(-\lambda) S_1(\lambda).
\end{equation}
From the definition of $S_1$  and Corollary \ref{estimate-c} it follows that $b$ has simple pole at $\lambda=0$. Also we have $b(-\lambda)=-b(\lambda)$. Therefore the function  
\begin{equation}
\nonumber
a(\lambda) b(\lambda) + a(-\lambda) b(-\lambda)=b(\lambda) \left(a(\lambda)-a(-\lambda)\right)
\end{equation} is holomorphic around origin, as $b$ has simple pole at $\lambda=0$ and $a(\lambda)- a(-\lambda)$ has zero at $\lambda=0$. Hence it follows from Theorem \ref{c-holomorphic} that the map \begin{equation}\nonumber
\lambda\mapsto a(\lambda) b(\lambda) + a(-\lambda) b(-\lambda)\end{equation} is holomorphic on $\R+ i[-\delta, \delta]$ for some $\delta<\frac{1}{2}$. Using Corollary \ref{estimate-c} and (\ref{estimate-S}) we get that
\begin{equation}
\nonumber
\sup_{\lambda\in \R+i[-\delta, \delta]\setminus \{\text{ a nbd. of origin}\}} (1 + |\lambda|)^N |a(\lambda) b(\lambda)| <\infty,
\end{equation}
for some $\delta>0$. This completes the proof.
\end{proof}

\begin{remark}
We now consider the non-perturbed case, i.e. the case when $B(t)\equiv 1.$ In this case $\tilde{A}(t)=(\sin t)^{2\alpha+1} (\cos t)^{2\beta+1}$ and $A(t)=(\sinh t)^{2\alpha+1} (\cosh t)^{2\beta+1}$. The corresponding Sturm Liouville operators for $A$ and $\tilde{A}$ are well studied on $\R^+$ and $(0,\pi/2)$ respectively. Indeed the full spectral decomposition of $\mathcal{L}$ and $-L$ is known. 
Let $P_n^{\alpha,\beta}$ be a Jacobi polynomial of order $(\alpha,\beta)$ of degree $n.$

The operator $-l$ can be explicitly written as  $$-l=-\frac{d^2}{dt^2} + (\alpha^2-\frac{1}{4})\cot ^2 t + (\beta^2-\frac{1}{4})\tan ^2 t .$$ It is known (see \cite[p.67, sec4.24]{Szego}) that $u_n(t)=\sqrt{\tilde{A}(t)}P_n^{\alpha,\beta}(\cos 2t)$
are the eigenfunctions of $-l$ with eigenvalues $\nu_{n}=(2n+\rho)^2-\alpha^2-\beta^2 +\frac{1}{2},$ where $\rho= \alpha+\beta+1$ and $n\geq 0.$ Therefore in this case the eigenfunction  $\Psi_n$ of $-L$ reduces to a normalising factor times the Jacobi polynomial $P_n^{\alpha, \beta}(\cos 2t)$  with eigenvalue $\nu_{n}=(2n+\rho)^2-\alpha^2-\beta^2 +\frac{1}{2}$.
Hence the  relation (\ref{relation-phi-psi}) becomes
\begin{equation}
\label{relation-phi-psi-}
P_n^{\alpha, \beta}(\cos 2t)=c'_n\varphi_{i\sqrt{\nu_n+ \rho^2}}(it), \text{ on } (0, \frac{\pi}{2}),
\end{equation}
with $\rho=\alpha + \beta +1$. Then we define the function $b$ as in (\ref{defn-b}) and state the Ramanujan master's theorem as in Theorem \ref{Ramanujan-per}. We conclude here the function $b$ doesnot come out to be exactly $P(\lambda)\left(\sin{\frac{\pi}{2} (\lambda-\rho)}\right)^{-1}$ (where $P$ is a polynomial) as in \cite[Theorem 5.1]{Olafsson-2} (when restricted to rank one case) is because $\mathcal{L}$ differs from the Laplace Beltrami operator considered in \cite{Olafsson-2} by a constant dependent on $\alpha,\beta$ times $I$ (Identity operator), which makes $\nu_n +\rho^2$ a complete square for all $n$ in their case.

\end{remark}

\section{The case when $\alpha$ or $\beta\leq 0$}
 Let us consider the case when $ \alpha~~~ \text{or}~~~  \beta\in(-\frac{1}{2},0]$. We know from Theorem \ref{spectral} that the eigenvalues $\nu_n$ of $-L$ goes to $+\infty$ as $n\rightarrow \infty$. In the case of $\alpha, \beta>0$, all the eigenvalues $\nu_n$'s are non-negative. But in general (i.e. for $\alpha$ or $\beta\leq 0$) all of the eigenvalues may not be non-negative. Let $n_0$ be the smallest non negative integer such that $\nu_m $ is nonnegative for all $m\geq n_0$. Let $m_0$ be the largest non negative integer such that $\nu_{m_0} + \rho^2<0.$ It is easy to see that we can write $\sqrt{-\left(\nu_m + \rho^2\right)}=\beta_m$, $\beta_m>0$ for $m\leq m_0.$  
 
 We define the following functions:
\begin{equation}
S(z)= \left\{\begin{array}{lll} \pi z \prod_{n=n_0}^\infty \left(1 + \frac{z^2}{\nu_n+\rho^2}\right) &\text { if } & \nu_{n_0}+\rho^2>0,\\
\pi z^3 \prod_{n=n_0}^\infty \left(1 + \frac{z^2}{\nu_n+\rho^2}\right) &\text { if } & \nu_{n_0}+\rho^2=0. \end{array}\right.
\end{equation} 
 and
 \begin{equation}
 S_1(z)=\frac{z^2}{S(z)}.
 \end{equation}
 Then the conclusion of the Theorem \ref{defn-S} will remain same. Now we  state Ramanujan's master theorem in this case as follows and its proof is similar to the proof of Theorem \ref{Ramanujan-per}.

\begin{theorem}\label{Ramanujan-per-1}
Let $ \alpha~~~ \text{or}~~~  \beta\in(-\frac{1}{2},0]$ and let $a\in \mathcal H(A, p, \delta)$. Then the following holds:
\begin{enumerate}
\item The Fourier series \begin{equation}\label{fourier-series-1}
 f(t)=2\pi i\sum_{m=0}^\infty \frac{d_m}{c_m} \, a(i\sqrt{\nu_m + \rho^2})\Psi_{m}(t),
\end{equation} converges uniformly   on compact subsets of $\Omega_p:=\{t\in\C\mid |\Re t|<\frac{\pi}{2}, |\Im t|<p\}$ and hence holomorphic there.

\item Let $0\leq \sigma<\delta$. Then the function $f$ can also be expressed in the integral form for $t\in\R$ with $|t|<p$ as
\begin{equation}\nonumber
f(it)=\int_{-\infty-i\sigma}^{\infty -i\sigma} \left(a(\lambda) b(\lambda) + a(-\lambda) b(-\lambda)\right)\varphi_{\lambda}(t)\,\frac{d\lambda}{c(\lambda)c(-\lambda)} 
\end{equation}
\begin{equation}\nonumber  \hspace{1.2in} + 2\pi i \sum_{m= m_0+1}^{n_0-1} \frac{d_m}{c_m} a(i\sqrt{\nu_m + \rho^2})\Psi_{m}(it)\\
+2\pi i \sum_{m=0}^{m_0} \frac{d_m}{c_m} a(\beta_m) \Psi_{m}(it),
\end{equation}

where the function $b$ is defined by
\begin{equation}
\label{defn-b-1}
\frac{b(\lambda)}{c(\lambda)c(-\lambda)}=S_1(\lambda).
\end{equation}
The integrals defined above are independent of $\sigma$ and extends as a holomorphic function to a neighbourhood  $\{z\in\C\mid |\Re z|<\frac{\pi}{2}-A\}$ of $i\R$.

\item The extension of $f$ to $i\R$ satisfies, for all $\lambda\in \mathbb R$ 
\begin{equation}\nonumber
\int_{\R}\left( f(it)-2\pi i \sum_{m= m_0+1}^{n_0-1} \frac{d_m}{c_m} a(i\sqrt{\nu_m + \rho^2})\Psi_{m}(it)\\
-2\pi i \sum_{m=0}^{m_0} \frac{d_m}{c_m} a(\beta_m) \Psi_{m}(it) \right)\, \varphi_\lambda(t) A(t)\,dt
\end{equation}
\begin{equation}\nonumber
= a(\lambda) b(\lambda) + a(-\lambda) b(-\lambda).
\end{equation}   
\end{enumerate}
\end{theorem}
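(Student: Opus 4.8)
The plan is to follow the structure of the proof of Theorem~\ref{Ramanujan-per} almost verbatim, treating the finitely many ``exceptional'' eigenvalues $\nu_0,\dots,\nu_{n_0-1}$ (those with $\nu_m+\rho^2\le 0$, or more precisely the indices $m\le m_0$ where $\nu_m+\rho^2<0$ and $m_0<m\le n_0-1$ where $\nu_m+\rho^2=0$ but $\nu_m<0$ so that $i\sqrt{\nu_m+\rho^2}$ is real) as a separate additive correction. First I would establish part $(1)$: by Corollary~\ref{est-psi-cor} and the residue estimate \eqref{estimate-residue} for $d_m$ (which by the remark at the start of Section~6 survives unchanged since $S$ and $S_1$ are defined using only the tail $n\ge n_0$), the tail $\sum_{m\ge n_0}$ of the series in \eqref{fourier-series-1} converges absolutely and uniformly on compact subsets of $\Omega_p$ exactly as before, using $|\Psi_m(t+is)|\le C_m Q(|\sqrt{\nu_m+\rho^2}|)|P(t+is)|e^{|s|\sqrt{\nu_m+\rho^2}}$, the bound $|a(i\sqrt{\nu_m+\rho^2})|\le Ce^{-p\sqrt{\nu_m+\rho^2}+A|\Re(i\sqrt{\nu_m+\rho^2})|}$ with $\Re(i\sqrt{\nu_m+\rho^2})=0$ for $m\ge n_0$, and $c_m\to\infty$; the finitely many remaining terms add nothing to convergence and are holomorphic in $t$ on all of $\C$ since each $\Psi_m$ is.

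Next, for part $(2)$, I would run the same contour-integration argument: take the rectangle $\Gamma=\Gamma_1\cup\Gamma_2\cup\Gamma_3\cup\Gamma_4$ with $\Gamma_1$ the horizontal segment at height $-\sigma$, and let $R\to\infty$. The estimates on $\Gamma_2,\Gamma_3,\Gamma_4$ go through word for word because they only used $|a(\lambda)|\le Ce^{-p\Im\lambda+A|\Re\lambda|}$, the pointwise bound \eqref{estimate-S} on $S_1$, the growth $|\varphi_\lambda(t)|\le C|P(t)|e^{|\Im(\lambda t)|}$, and $A<\pi/2$. The one genuine difference is bookkeeping of poles of $S_1(\lambda)=b(\lambda)/(c(\lambda)c(-\lambda))$ inside $\Gamma$: these sit at $\lambda=i\sqrt{\nu_m+\rho^2}$ for \emph{all} $m\ge0$, but for $m\le m_0$ the value $i\sqrt{\nu_m+\rho^2}=i\cdot i\beta_m=-\beta_m$ is real (hence on the boundary/near $\Gamma_1$ if $\sigma$ is small; for $0\le\sigma<\delta$ one chooses $\sigma$ so the contour passes below them, which is legitimate since the poles at real points $-\beta_m$ and $+\beta_m$ are symmetric and the shift of contour only picks them up once — alternatively, and more cleanly, I would push $\Gamma_1$ to the real axis and record these as residues), and for $m_0<m\le n_0-1$ the value $i\sqrt{\nu_m+\rho^2}=0$ contributes through the simple pole of $S_1$ at the origin. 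Collecting residues via Cauchy's theorem then yields exactly the right-hand side of the displayed formula in $(2)$: the integral over $\R-i\sigma$ equals $2\pi i$ times the sum of residues, which splits as the tail sum $\sum_{m\ge n_0}\frac{d_m}{c_m}a(i\sqrt{\nu_m+\rho^2})\Psi_m(it)$ (matching $f$ via \eqref{fourier-series-1} once the finite exceptional terms are moved to the other side) plus the correction terms $2\pi i\sum_{m=m_0+1}^{n_0-1}\frac{d_m}{c_m}a(i\sqrt{\nu_m+\rho^2})\Psi_m(it)+2\pi i\sum_{m=0}^{m_0}\frac{d_m}{c_m}a(\beta_m)\Psi_m(it)$, where I use $\varphi_{i\sqrt{\nu_m+\rho^2}}(it)=\varphi_{\beta_m}(it)/c_m\cdot c_m$ — wait, more precisely $\Psi_m(it)=c_m\varphi_{\beta_m}(it)$ from Theorem~\ref{spectral}. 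The independence of $\sigma\in[0,\delta)$ and the holomorphic extension of the integral to $\{|\Re z|<\pi/2-A\}$ follow from the same $L^1$-in-$\lambda$ estimate as in Theorem~\ref{Ramanujan-per}, namely $\int_\R e^{p\sigma+A|y|}e^{|ys-t\sigma|}e^{-\pi|y|/2}(y^2+\sigma^2)\,dy<\infty$ when $|s|<\pi/2-A$.

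Finally, for part $(3)$, I would subtract the finite exceptional sums from $f(it)$ so that what remains is precisely the Sturm--Liouville inverse transform of $\lambda\mapsto a(\lambda)b(\lambda)+a(-\lambda)b(-\lambda)$ against $\varphi_\lambda(t)|c(\lambda)|^{-2}$; then apply the inversion formula and the $\mathcal S_2(\R)_e$-membership argument exactly as in the proof of Theorem~\ref{Ramanujan-per}. Concretely: $b(\lambda)=c(\lambda)c(-\lambda)S_1(\lambda)$ is odd with a simple pole at $0$ (from the definition of $S_1$ and Corollary~\ref{estimate-c}), so $a(\lambda)b(\lambda)+a(-\lambda)b(-\lambda)=b(\lambda)(a(\lambda)-a(-\lambda))$ is holomorphic at the origin, and by Theorem~\ref{c-holomorphic} it is holomorphic on $\R+i[-\delta,\delta]$ for small $\delta<1/2$; the polynomial-times-exponential bounds from Corollary~\ref{estimate-c} and \eqref{estimate-S} give the rapid decay \eqref{ab-schwartz}, so the map lies in $\mathcal S_2(\R)_e$ and Bensaid's inversion theorem applies. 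The main obstacle — really the only substantive point beyond transcription — is handling the exceptional eigenvalues $\nu_m$ with $\nu_m+\rho^2\le 0$: one must verify that their contributions are genuinely picked up as residues inside the contour (in particular that choosing $0\le\sigma<\delta$ with $\delta$ small keeps $\Gamma_1$ below the real points $-\beta_m$ but the symmetric term $a(-\lambda)b(-\lambda)$ compensates, or equivalently push to $\sigma=0$ and invoke the principal-value/symmetry argument already implicit in \eqref{integral-f-sigma0}), and that \eqref{defn-S}, \eqref{estimate-S}, \eqref{estimate-residue} indeed carry over with the modified product; the remark preceding this theorem asserts the latter, so it remains only to check the contour bookkeeping, which is routine once one notes $i\sqrt{\nu_m+\rho^2}\in\R$ precisely for $m\le m_0$ and $=0$ for the intermediate indices.
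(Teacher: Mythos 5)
Your overall strategy---reuse the contour argument of Theorem \ref{Ramanujan-per} and treat the indices $m<n_0$ as a finite additive correction---is the right one, and your handling of parts $(1)$ and $(3)$ matches what the paper intends. But there is a genuine error at the heart of your part $(2)$: you assert that $S_1$ has poles at $i\sqrt{\nu_m+\rho^2}$ for \emph{all} $m\ge 0$, and you then devote the key step to worrying about real poles at $\pm\beta_m$ sitting on or near $\Gamma_1$ (principal values, symmetry of $a(\lambda)b(\lambda)+a(-\lambda)b(-\lambda)$, choice of $\sigma$) and about the intermediate indices contributing ``through the simple pole of $S_1$ at the origin.'' In Section 6 the product defining $S$ runs only over $n\ge n_0$, so $S_1(z)=z^2/S(z)$ has poles precisely at $\pm i\sqrt{\nu_n+\rho^2}$ for $n\ge n_0$ (all on the imaginary axis), plus a simple pole at $0$ only in the case $\nu_{n_0}+\rho^2=0$. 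There are no poles at $\pm\beta_m$ for $m\le m_0$, nor at $i\sqrt{\nu_m+\rho^2}$ for $m_0<m\le n_0-1$; moreover these latter points are in general nonzero points of $i[0,\rho)$, not the origin as you claim ($n_0$ is defined by the sign of $\nu_m$, not of $\nu_m+\rho^2$).

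Consequently the contour only picks up the residues with $m\ge n_0$, so that the integral over $\R-i\sigma$ equals $2\pi i\sum_{m\ge n_0}\frac{d_m}{c_m}\,a(i\sqrt{\nu_m+\rho^2})\,\Psi_m(it)$, and the two finite sums in the displayed identity of part $(2)$ are simply the terms of the series (\ref{fourier-series-1}) that the contour does \emph{not} capture, carried over to the right-hand side; no delicate contour bookkeeping near the real axis is required. Your version, taken literally (``the integral \ldots splits as the tail sum \ldots plus the correction terms''), would prove $f(it)=\int(\cdots)$ with no correction terms at all, which is not the stated identity and is false unless those finite sums vanish. Once this is corrected, the rest of your argument---the decay estimates on $\Gamma_2,\Gamma_3,\Gamma_4$, the $L^1$-in-$\lambda$ bound giving holomorphy on $\{|\Re z|<\pi/2-A\}$, and the inversion/$\mathcal S_2(\R)_e$ argument for part $(3)$ after subtracting the finite sums---goes through exactly as in the proof of Theorem \ref{Ramanujan-per}, which is all the paper itself claims.
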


\section{Appendix}

\noindent{\bf Proof of Theorem \ref{est-phi}:}
In this subsection we prove Theorem \ref{est-phi}. The proof is similar to \cite[Theorem 1.2]{Brandolini-Gigante}. To prove the theorem we need the following preliminaries: 

Let $J_\alpha, Y_\alpha$ be the Bessel functions of first and second kind respectively. Also let $H_\alpha^{(1)}$ and $H_\alpha^{(2)}$ be the Hankel functions defined by
\begin{equation}
\nonumber
H_\alpha^{(1)}(x)= J_\alpha(x) + i Y_\alpha(x), H_\alpha^{(2)}(x)= J_\alpha(x) - i Y_\alpha(x).
\end{equation}
Also let \begin{equation}
\mathcal J_\alpha(x)=\sqrt{x} J_\alpha(x), \mathcal Y_\alpha(x)=\sqrt{x} Y_\alpha(x), \mathcal H_\alpha^{(1)}(x)=\sqrt{x} H_\alpha^{(1)}(x),  \mathcal H_\alpha^{(2)}(x)=\sqrt{x} H_\alpha^{(2)}(x).
\end{equation}
Let $\Omega=\{t+ is\mid |s|<\pi/2\}$. Also let \begin{equation}
\nonumber
\langle x \rangle=\frac{|x|}{1 + |x|}.
\end{equation}
To prove the Theorem \ref{est-phi}, the following two theorems are needed:
\begin{theorem} $($\cite[Theorem 10.1, p. 219]{Olver}$)$ \label{est-integraleqn}
Let $\alpha, \beta\in\C$ and let $\mathcal P$ be a finite chain of $R_2$ arcs in complex plane joining $\alpha$ and $\beta$.
Let \begin{equation}
\label{integral-equation}
h(\xi)=\int_\alpha^\xi K(\xi, v) \left(\psi_0(v) h(v) + \varphi(v) J(v)\right)\, dv,
\end{equation}
where
\begin{enumerate}
\item the path of integration lies on $\mathcal P$,
\item the real/complex valued functions $J(v), \varphi(v), \psi_0(v)$ are continuous except for a finite number of discontinuity,
\item the real/complex valued kernel $K(\xi, v)$ and its first two partial $\xi$ derivatives are continuous function on two variables $\xi, v \in \mathcal P$, (here all differentiation with respect to $\xi$ are perfomed along $\mathcal P$).
\item The kernel satisfies 
\begin{enumerate}
\item $K(\xi, \xi)=0$,
\item $|K(\xi, v)|\leq p_0(\xi) q(v)$,
\item $\left|\frac{\partial K}{\partial \xi}(\xi, v)\right| \leq p_1(\xi) q(v)$,
\item $\left|\frac{\partial^2 K}{\partial \xi^2}(\xi, v)\right| \leq p_2(\xi) q(v)$,
\end{enumerate}
for all $\xi\in \mathcal P$ and $v\in (\alpha, \xi)_\mathcal P$, for some continuous functions $p_0, p_1, p_2, q$. Here 
$ (\alpha, \xi)_\mathcal P$ denotes the part of $\mathcal P$ lying between $\alpha$ and $\xi$.

\item Also let the functions 
\begin{equation}\nonumber
\Phi(\xi)=\int_\alpha^\xi |\varphi(v)\,dv|, \Psi_0(\xi)=\int_\alpha^\xi |\psi_0(v)\,dv|
\end{equation}
converges.
\item Let \begin{equation}
\nonumber
\kappa=\sup_{\xi\in\mathcal P}\{q(\xi)|J(\xi)|\} \text{ and } \kappa_0=\sup_{\xi\in\mathcal P}\{p_0(\xi)q(\xi)\}
\end{equation}
are finite.
\end{enumerate}

Then the integral equation (\ref{integral-equation}) has a unique solution $h$ which is continuously differentiable in $\mathcal P$ and satisfies
\begin{equation}
\frac{|h(\xi)|}{p_0(\xi)}, \frac{|h'(\xi)|}{p_1(\xi)} \leq \kappa \Phi(\xi) \exp{\left(\kappa_0 \Psi_0(\xi)\right)}.
\end{equation}
\end{theorem}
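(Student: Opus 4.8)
The plan is to treat \eqref{integral-equation} as a linear Volterra integral equation and solve it by the method of successive approximations, controlling the iterates with the kernel bounds of hypothesis~(4) together with the convergence of $\Phi$ and $\Psi_0$. Concretely, I would set $h_0(\xi)=\int_\alpha^\xi K(\xi,v)\varphi(v)J(v)\,dv$ and, recursively for $s\ge 0$, $h_{s+1}(\xi)=\int_\alpha^\xi K(\xi,v)\psi_0(v)h_s(v)\,dv$, all integrals taken along $\mathcal P$, and declare $h=\sum_{s\ge 0}h_s$ to be the candidate solution. The base estimate is immediate: from $|K(\xi,v)|\le p_0(\xi)q(v)$ and $q(v)|J(v)|\le\kappa$ one gets $|h_0(\xi)|\le p_0(\xi)\kappa\int_\alpha^\xi|\varphi(v)\,dv|=p_0(\xi)\kappa\Phi(\xi)$.

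The key step is the induction $|h_s(\xi)|\le p_0(\xi)\kappa\Phi(\xi)(\kappa_0\Psi_0(\xi))^s/s!$. For the inductive step I would bound $|h_{s+1}(\xi)|\le p_0(\xi)\int_\alpha^\xi q(v)|\psi_0(v)|\,|h_s(v)|\,|dv|$, substitute the inductive hypothesis, use $q(v)p_0(v)\le\kappa_0$ and the monotonicity of $\Phi$ along $\mathcal P$ to extract the factor $\kappa\Phi(\xi)$, and then invoke the elementary identity $\int_\alpha^\xi|\psi_0(v)|\Psi_0(v)^s\,|dv|=\Psi_0(\xi)^{s+1}/(s+1)$, valid because $\Psi_0$ is the indefinite integral of $|\psi_0|$ along $\mathcal P$. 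Summing the resulting series gives the majorant $|h(\xi)|/p_0(\xi)\le\kappa\Phi(\xi)\exp(\kappa_0\Psi_0(\xi))$; since this bound is uniform on $\mathcal P$ the series converges absolutely and uniformly, $h$ is continuous, and interchanging the summation with the integral shows that $h$ solves \eqref{integral-equation}.

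For the derivative estimate I would use hypothesis~4(a), namely $K(\xi,\xi)=0$: differentiating each $h_s$ with respect to $\xi$ along $\mathcal P$ annihilates the boundary term, so $h_0'(\xi)=\int_\alpha^\xi\tfrac{\partial K}{\partial\xi}(\xi,v)\varphi(v)J(v)\,dv$ and $h_{s+1}'(\xi)=\int_\alpha^\xi\tfrac{\partial K}{\partial\xi}(\xi,v)\psi_0(v)h_s(v)\,dv$; running the same induction with $|\tfrac{\partial K}{\partial\xi}(\xi,v)|\le p_1(\xi)q(v)$ and the bound on $|h_s|$ just obtained yields $|h_s'(\xi)|\le p_1(\xi)\kappa\Phi(\xi)(\kappa_0\Psi_0(\xi))^s/s!$, hence $|h'(\xi)|/p_1(\xi)\le\kappa\Phi(\xi)\exp(\kappa_0\Psi_0(\xi))$ after summation, and continuity of $\tfrac{\partial K}{\partial\xi}$ gives $h\in C^1(\mathcal P)$. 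Uniqueness is then automatic: any difference $g$ of two solutions satisfies the homogeneous equation $g(\xi)=\int_\alpha^\xi K(\xi,v)\psi_0(v)g(v)\,dv$, and iterating it $n$ times with the same estimates gives $|g(\xi)|\le(\sup_{\mathcal P}|g|)(\kappa_0\Psi_0(\xi))^n/n!\to 0$.

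The main obstacle is bookkeeping rather than a new idea: one must set up the parametrization on the finite chain of $R_2$ arcs carefully, so that the monotonicity of $\Phi$ and $\Psi_0$ and the identity $\int|\psi_0|\Psi_0^s=\Psi_0^{s+1}/(s+1)$ hold verbatim along $\mathcal P$ and not merely on a real interval, and so that differentiation ``along $\mathcal P$'' is well defined at the junctions of the arcs; and one must justify interchanging limit and derivative in $\sum_{s\ge 0}h_s$, for which the uniform bounds coming from the $p_1$- and $p_2$-estimates on $K$ are precisely what is required.
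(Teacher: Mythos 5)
The paper offers no proof of this statement---it is quoted directly from Olver (Theorem 10.1 of \emph{Asymptotics and Special Functions}), whose proof is precisely the Picard/successive-approximation scheme you describe, including the induction $|h_s(\xi)|\le \kappa\,p_0(\xi)\Phi(\xi)\,(\kappa_0\Psi_0(\xi))^s/s!$ and the use of $K(\xi,\xi)=0$ to kill the boundary term in the derivative bound. Your proposal is correct and essentially reproduces that argument; the only nit is in the uniqueness step, where the iteration should be run with $\sup(|g|/p_0)$ (or with crude sup-bounds of $q$ and $p_0$ on a compact subchain) rather than pairing $\sup|g|$ directly with $\kappa_0$ --- a routine adjustment.
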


\begin{theorem} $($\cite[Appendix, Lemma A.1]{Brandolini-Gigante}$)$ \label{integral-equation-1}
Let $u_1$ and $u_2$ be two linearly independent solutions of the equation \begin{equation}
\nonumber
u'' + p_1 u' + p_2u=0,
\end{equation}
on $\R$.
If $\varphi$ is a $C^2$ solution of the integral equation
\begin{equation}\nonumber
u(t)=-\int_{t_0}^t \frac{u_1(t)u_2(s)-u_2(t)u_1(s)}{u_1(s) u_2'(s)-u_2(s) u_1'(s)}\left(\psi_0(s) u(s) + J(s)\phi(s) \right)\,ds
\end{equation}
then $\varphi$ is a solution of 
\begin{equation}
\nonumber
u''+ p_1u' + p_2 u=\psi_0 u + J\phi.
\end{equation}
\end{theorem}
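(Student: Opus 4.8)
The statement is a classical variation-of-parameters identity, and the plan is to view the integrand as the Green kernel of the homogeneous equation $u'' + p_1 u' + p_2 u = 0$ acting against a source term, and then to differentiate twice under the integral sign. First I would write $W(s) := u_1(s) u_2'(s) - u_2(s) u_1'(s)$ for the Wronskian of $u_1,u_2$; since $u_1,u_2$ are linearly independent solutions of that equation, Abel's identity gives $W(s) = W(t_0)\exp\!\bigl(-\int_{t_0}^{s} p_1\bigr)$ with $W(t_0)\neq 0$, so $W$ is continuous and nowhere vanishing and $1/W(s)$ is a legitimate continuous weight. I would then set
\[
K(t,s) := -\,\frac{u_1(t) u_2(s) - u_2(t) u_1(s)}{W(s)}, \qquad g(s) := \psi_0(s)\,\varphi(s) + J(s)\,\phi(s),
\]
so that the hypothesis reads $\varphi(t) = \int_{t_0}^{t} K(t,s)\,g(s)\,ds$; here $g$ is continuous wherever the data $\psi_0, J, \phi$ are, since $\varphi$ is assumed continuous.

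Next I would differentiate once with the Leibniz rule, obtaining $\varphi'(t) = K(t,t)\,g(t) + \int_{t_0}^{t} \partial_t K(t,s)\,g(s)\,ds$; the boundary term drops because $K(t,t) = 0$, which is exactly the structural cancellation recorded in hypothesis 4(a) of Theorem \ref{est-integraleqn}. Differentiating a second time gives $\varphi''(t) = \partial_t K(t,t)\,g(t) + \int_{t_0}^{t} \partial_t^2 K(t,s)\,g(s)\,ds$, and a direct computation shows $\partial_t K(t,t) = -\bigl(u_1'(t)u_2(t)-u_2'(t)u_1(t)\bigr)/W(t) = 1$, so $\varphi''(t) = g(t) + \int_{t_0}^{t}\partial_t^2 K(t,s)\,g(s)\,ds$. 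In each step $\partial_t^k K(t,s)$ is obtained by differentiating only the $t$-dependent factors $u_1(t), u_2(t)$, which is legitimate since $u_1,u_2$ are $C^2$ away from the discontinuities of $p_1,p_2$.

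Finally I would combine these to get
\[
\varphi''(t) + p_1(t)\varphi'(t) + p_2(t)\varphi(t) = g(t) + \int_{t_0}^{t}\bigl(\partial_t^2 K + p_1\,\partial_t K + p_2\,K\bigr)(t,s)\,g(s)\,ds,
\]
and observe that the kernel in the last integral equals
\[
-\,\frac{\bigl(u_1''(t) + p_1(t)u_1'(t) + p_2(t)u_1(t)\bigr)u_2(s) - \bigl(u_2''(t) + p_1(t)u_2'(t) + p_2(t)u_2(t)\bigr)u_1(s)}{W(s)},
\]
which vanishes identically because $u_1,u_2$ solve the homogeneous equation. Hence $\varphi'' + p_1\varphi' + p_2\varphi = g = \psi_0\varphi + J\phi$, the asserted identity. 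There is no serious obstacle: the whole content is the two differentiations together with the three elementary facts $K(t,t)=0$, $\partial_t K(t,t)=1$, and that $K(\cdot,s)$ solves the homogeneous equation in its first variable. The only point calling for mild care is the bookkeeping at the finitely many admissible discontinuities of $\psi_0, J, \phi$ in the setup of Theorem \ref{est-integraleqn}: the Leibniz differentiations and the final identity are carried out on each subinterval of continuity, and continuity of $\varphi$ together with the standing $C^2$ hypothesis splice the conclusion across those exceptional points. I expect this patching to be the most delicate, though still routine, step.
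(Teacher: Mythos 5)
Your argument is correct: it is the standard variation-of-parameters computation, resting on the three identities $K(t,t)=0$, $\partial_t K(t,t)=1$, and the fact that $t\mapsto K(t,s)$ solves the homogeneous equation, and each of these is verified accurately (including the sign bookkeeping through the Wronskian). The paper itself gives no proof of this statement — it is imported verbatim from Lemma A.1 of the Brandolini--Gigante appendix — so there is nothing to compare against, but your proof is exactly the expected one and is complete; the remark about patching across the finitely many discontinuities of $\psi_0, J, \phi$ is the right level of care given the standing $C^2$ hypothesis on $\varphi$.
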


\begin{proof}[Proof of Theorem \ref{est-phi}] \label{proof-est-phi} We first assume that $\lambda\not=0$.
From the asymptotic expansion of $\varphi_\lambda$ (see \cite[p. 219]{Brandolini-Gigante}) we have
\begin{equation}
\label{assymp-sum-phi}
\varphi_\lambda(t)=\sum_{m=0}^M a_m(t) \frac{\mathcal J_{\alpha+ m}(\lambda t)}{\sqrt{A(t)} \lambda^{m+ \alpha+ \frac 12}} + \frac{R_M(\lambda, t)}{\sqrt{A(t)}},
\end{equation}
where $a_m$ are holomorphic and $M\geq 0$.

The function $R_M(\lambda, t)$ satisfies (see \cite[(1.3)]{Brandolini-Gigante})
\begin{equation}
\label{eqn-RM}
\frac{d^2R_M}{dt^2} + \left(\lambda^2-\frac{\alpha^2-\frac 14}{t^2}\right)R_M=G(t)R_M + 2 a_{M+1}'(t) \frac{\mathcal J_{\alpha+ M}(\lambda t)}{\lambda^{M + \alpha + \frac 12}}.
\end{equation}
Let $\Omega_1=\Omega\setminus (-\infty, 0]$.
From Theorem \ref{integral-equation-1}, it follows that a solution of the following integral equation 
\begin{equation}
\label{integral-RM}
R_M(\lambda, t)=-\pi\int_0^t\frac{\mathcal J_\alpha(\lambda t) \mathcal Y_\alpha(\lambda s)-\mathcal J_\alpha(\lambda s) \mathcal Y_\alpha(\lambda t)}{2\lambda} \left(G(s) R_M(\lambda, s) + 2 a_{M+1}'(s)\frac{\mathcal J_{\alpha+ M}(\lambda s)}{\lambda^{M+\alpha+ \frac 12}}\right)\,ds
\end{equation}
also satisfies (\ref{eqn-RM}). As shown in \cite[p. 222]{Brandolini-Gigante} $R_M(\lambda, t)$ is the solution of the above integral equation (\ref{integral-RM}), which satisfies the required Cauchy Condition.
Let $t_0\in (0, \infty)$. Then 
\begin{equation}
\nonumber
R_M(\lambda, t)=-\pi\int_{t_0}^t\frac{\mathcal J_\alpha(\lambda t) \mathcal Y_\alpha(\lambda s)-\mathcal J_\alpha(\lambda s) \mathcal Y_\alpha(\lambda t)}{2\lambda} \left(G(s) R_M(\lambda, s) + 2 a_{M+1}'(s)\frac{\mathcal J_{\alpha+ M}(\lambda s)}{\lambda^{M+\alpha+ \frac 12}}\right)\,ds
\end{equation}
\begin{equation}\nonumber
\hspace{2in}+ R_M(\lambda, t_0).
\end{equation}
Since both side of the equation above is holomorphic in $\Omega_1$, we have for all $\xi\in\Omega_1$,
\begin{equation}
\nonumber
R_M(\lambda, \xi)=-\pi\int_{t_0}^\xi\frac{\mathcal J_\alpha(\lambda \xi) \mathcal Y_\alpha(\lambda s)-\mathcal J_\alpha(\lambda s) \mathcal Y_\alpha(\lambda \xi)}{2\lambda}\left(G(s) R_M(\lambda, s) + 2 a_{M+1}'(s)\frac{\mathcal J_{\alpha+ M}(\lambda s)}{\lambda^{M+\alpha+ \frac 12}}\right)\,ds
\end{equation}
\begin{equation}\nonumber
\hspace{2in}+ R_M(\lambda, t_0).
\end{equation}
Let $$\begin{array}{lll}
    K(\xi, s)&=& -\pi \frac{\mathcal J_\alpha(\lambda \xi) \mathcal Y_\alpha(\lambda s)-\mathcal J_\alpha(\lambda s) \mathcal Y_\alpha(\lambda \xi)}{2\lambda}\\ \\
    &=& -\pi i \frac{\mathcal H_\alpha^{(1)}(\lambda \xi) \mathcal H_\alpha^{(2)}(\lambda s)-\mathcal H_\alpha^{(1)}(\lambda s) \mathcal H_\alpha^{(2)}(\lambda \xi)}{4\lambda}.
    \end{array}$$Using the estimates of Bessel and Hankel functions (as in \cite{Brandolini-Gigante}) we get,
    $$\left|K(\xi, s)\right|\leq \left\{ \begin{array}{lll}
    \frac{C}{|\lambda|} \langle \lambda\xi\rangle^{|\alpha| + \frac 12} \langle \lambda s\rangle^{-|\alpha| + \frac 12} e^{|\Im(\lambda \xi)-\Im(\lambda s)|} & \text{ for } & \alpha\not=0,\\ \\
    \frac{C}{|\lambda|} \langle \lambda\xi\rangle^{ \frac 12} \langle \lambda s\rangle^{\frac 12} \log(\frac{2}{\langle \lambda s\rangle}) e^{|\Im(\lambda \xi)-\Im(\lambda s)|} & \text{ for } & \alpha=0.
    \end{array}\right.$$
    Also we have $$\left|\frac{\partial}{\partial \xi}   K(\xi, s)\right|\leq \left\{ \begin{array}{lll}
    C \langle \lambda\xi\rangle^{|\alpha| -\frac 12} \langle \lambda s\rangle^{-|\alpha| + \frac 12} e^{|\Im(\lambda \xi)-\Im(\lambda s)|} & \text{ for } & \alpha\not=0,\\ \\
    C \langle \lambda\xi\rangle^{-\frac 12} \langle \lambda s\rangle^{\frac 12} \log(\frac{2}{\langle \lambda s\rangle}) e^{|\Im(\lambda \xi)-\Im(\lambda s)|} & \text{ for } & \alpha=0.
    \end{array}\right.$$
    For $\Im(\lambda \xi)>\Im(\lambda s)>0$, we let 
    \begin{enumerate}
    \item $p_0(\xi)=\frac{C}{|\lambda|} \langle \lambda\xi\rangle^{|\alpha| + \frac{1}{2}}e^{\Im(\lambda\xi)}$,
    \item $q(s)=\left\{\begin{array}{lll}
    \langle \lambda s\rangle^{-|\alpha| + \frac{1}{2}}e^{-\Im(\lambda s)} & \text{ for } \alpha\not=0\\ \\
 \langle \lambda s\rangle^{\frac{1}{2}}\log(\frac{2}{\langle \lambda s \rangle}) e^{-\Im(\lambda s)} & \text{ for } \alpha=0    
    \end{array}\right.$
    \item $p_1(\xi)=\langle \lambda\xi\rangle^{|\alpha| -\frac 12} e^{\Im(\lambda \xi)}$,
    \item $\psi_0(s)=G(s)$, $J(s)=\frac{1}{q(s)}$ and $\varphi(s)=2 a_{M+1}'(s) \frac{\mathcal J_{\alpha+ M}(\lambda s)}{\lambda^{M+\alpha + \frac{1}{2}}} q(s)$.
    \end{enumerate}
Therefore 
\begin{enumerate}
\item $\kappa_0:=\sup\{p_0(\xi) q(\xi)\}=\frac{C}{|\lambda|}$,
\item $\kappa:=\sup\{q(\xi) J(\xi)\}=1$.
\end{enumerate}
We have $$
|\varphi(s)| \leq \left\{ \begin{array}{lll} 
C |a_{M+1}'(s)| \,|\lambda|^{-M-\alpha-\frac 12} \langle \lambda s\rangle^{\alpha- |\alpha| +M+ 1} & \text{ for }& \alpha\not=0\\ \\
C |a_{M+1}'(s)| \, |\lambda|^{-M-\frac 12} \langle \lambda s\rangle^{M+ 1}\log(\frac{2}{\langle\lambda s\rangle}) & \text{ for }& \alpha=0
\end{array}
\right.$$
We know that $|a_{M+1}'(s)|\leq C s^M$ for $s<1$ and $a_{M+1}'\in L^1 ((1, \infty) )$.
Therefore, we have
$$
|\Phi(\xi)| \leq \left\{ \begin{array}{lll} 
C |\lambda|^{-M-\alpha-\frac 12} \langle \lambda \xi\rangle^{\alpha- |\alpha| +M+ 1} \langle \xi\rangle ^{M} & \text{ for }& \alpha\not=0\\ \\
C |\lambda|^{-M-\frac 12} \langle \lambda \xi\rangle^{M+ 1}\log(\frac{2}{\langle\lambda \xi\rangle})\langle \xi\rangle ^{M} & \text{ for }& \alpha=0
\end{array}
\right.$$

Hence by Theorem \ref{est-integraleqn}  we have  $$\begin{array}{lll}
\left| R_M(\lambda, \xi)\right| &\leq & C  |\lambda|^{-M-\alpha-\frac 32} \langle \lambda\xi \rangle^{M+ \alpha+ \frac32} e^{\Im(\lambda\xi)}\exp\{\left(\frac{C}{|\lambda|}\left| \int_0^\xi G(s)\,ds\right|\right)\} \\ \\
&\leq&  C |P(\xi)| \, e^{\Im(\lambda \xi)}  \exp\left(\frac{C}{|\lambda|}\left| \int_0^\xi G(s)\,ds\right|\right),

\end{array}$$
 for some polynomial $P$ for the case $\alpha\not= 0$. Then as in the argument \cite[Remark 1.3]{Brandolini-Gigante} we can improve the inequality above as
\begin{equation}
\label{est-R_M}
\left| R_M(\lambda, \xi)\right| \leq  C |P(\xi)| \, e^{\Im(\lambda \xi)}  \exp\left(\frac{C|\xi|}{1 + |\lambda \xi|}\left| \int_0^\xi G(s)\,ds\right|\right),
\end{equation} 
 Similar estimate also holds for $\alpha=0$. Therefore if $G$ is in $L^1$ in every direction, it follows that for all $\xi\in \Omega_1$
 \begin{equation}
 \nonumber
 |R_M(\lambda, \xi)|\leq C |P(\xi)|\, e^{\Im(\lambda\xi)}.
 \end{equation}
We can do the similar technique to the domain $\Omega_2=\Omega\setminus [0, \infty)$ and get the similar estimate for the domain $\Omega_2$.
 Hence we have from (\ref{assymp-sum-phi}) that
 \begin{equation}
 \nonumber
 |\varphi_\lambda(\xi)| \leq C\, |P(\xi)| \, e^{|\Im(\lambda\xi)|},
 \end{equation}
for all $\xi\in\Omega$.
\end{proof}

\vspace{.5in}

\noindent {\bf Holomorphic ODE:} We state the following theorem about the holomorphicity of solutions of a differential equation (\cite[Theorem 1.4]{Milicic}). This is used in the proof of the Lemma \ref{holo-extn-phi}.
\begin{theorem}\label{diffequation-holo}
Let $\Omega$ be a simply connected region in $\C$ and $z_0\in\C$.  Also let $a_1, a_2, \cdots, a_n$ be holomorphic functions on $\Omega$. For any complex numbers $y_0, y_1, \cdots, y_n$, there exists a unique holomorphic function $y$ on $\Omega$ such that
\begin{equation}
\frac{d^ny}{dx^n} + a_1\frac{d^{n-}y}{dx^{n-1}} + \cdots + a_{n-1}\frac{dy}{dx} + a_n y=0,
\end{equation}
and \begin{equation}
y(z_0)=y_0, y^{(1)}(z_0)=y_1, \cdots, y^{(n-1)}(z_0)=y_{n-1}.
\end{equation}
\end{theorem}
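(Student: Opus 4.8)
The plan is to reduce the scalar equation of order $n$ to a first order holomorphic system, to establish existence and uniqueness locally on discs by a majorant argument of Cauchy--Kovalevskaya type, and then to globalize by analytic continuation, using the simple connectedness of $\Omega$ through the monodromy theorem. This is the classical route, and the point of the write-up is to check that each ingredient is available in the holomorphic (as opposed to merely real-analytic) category.

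First I would rewrite the equation as a first order system. Setting $Y=(y, y', \ldots, y^{(n-1)})^{T}$, the equation is equivalent to $Y'=\mathcal A(z)Y$, where $\mathcal A(z)$ is the companion matrix whose only nonconstant entries form its last row $(-a_n, -a_{n-1}, \ldots, -a_1)$; its entries are holomorphic on $\Omega$. The initial condition becomes $Y(z_0)=(y_0, y_1, \ldots, y_{n-1})^{T}$, and it suffices to produce a unique holomorphic $\C^n$-valued solution of this system. For the local step, fix $z_1\in\Omega$ and let $R=\mathrm{dist}(z_1,\partial\Omega)$ (with $R=\infty$ if $\Omega=\C$), so that $\mathcal A$ is holomorphic on the disc $D(z_1,R)$. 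Seeking $Y(z)=\sum_{k\geq 0}C_k(z-z_1)^k$ with $C_0$ the prescribed initial vector and substituting into $Y'=\mathcal A Y$ yields an explicit recursion that determines each $C_k$ uniquely from $C_0,\ldots,C_{k-1}$ and the Taylor coefficients of $\mathcal A$; this already gives uniqueness of the formal solution. For convergence I would use the standard majorant method: bounding the Taylor coefficients of the entries of $\mathcal A$ on any smaller disc $D(z_1,r)$, $r<R$, by a geometric majorant and comparing the recursion with the explicitly solvable majorant equation shows that $Y$ converges on $D(z_1,r)$ for every $r<R$, hence on all of $D(z_1,R)$. Thus there is a unique holomorphic solution on the full disc of holomorphy of the coefficients.

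Then comes the globalization. Starting from $z_0$ with the prescribed data, the local step yields a holomorphic solution on $D(z_0,R_0)$ with $R_0=\mathrm{dist}(z_0,\partial\Omega)$. Given any path $\gamma$ in $\Omega$ issuing from $z_0$, I would continue the solution analytically along $\gamma$: at each point reached one reads off the value of $Y$ as fresh initial data and re-applies the local step, which is legitimate precisely because the solution always extends to a disc whose radius equals the distance to $\partial\Omega$, so linearity prevents any finite interior obstruction. Local uniqueness forces the continued germs to agree on overlaps, so the continuation along $\gamma$ is well defined. Because $\Omega$ is simply connected, the monodromy theorem shows that the result of continuation depends only on the endpoint and not on the chosen path; hence the germs patch into a single-valued holomorphic function $Y$, and therefore $y$, on all of $\Omega$. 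Global uniqueness then follows from the identity theorem: two solutions with the same data at $z_0$ have a difference solving the same system with zero data, which vanishes near $z_0$ by local uniqueness and hence on the connected set $\Omega$.

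The main obstacle is the globalization step rather than the local analysis. One must verify carefully that the local solution always extends to a disc of radius exactly $\mathrm{dist}(z,\partial\Omega)$, so that continuation never stalls at an interior point, and then invoke the monodromy theorem correctly; this is exactly where simple connectedness is used to rule out nontrivial monodromy and recover a \emph{single-valued} solution. By contrast, the majorant estimate underlying the local step is routine, but it is what makes the entire continuation argument possible.
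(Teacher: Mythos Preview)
Your argument is correct and is the classical route: reduce to a first-order holomorphic system, obtain local existence and uniqueness on discs via the Cauchy majorant method, and then globalize by analytic continuation along paths together with the monodromy theorem, which is precisely where simple connectedness enters. There is nothing to compare against here, since the paper does not supply its own proof of this statement; it merely records it in the Appendix as a known result, citing \cite[Theorem 1.4]{Milicic}, and uses it as a black box in the proof of Lemma~\ref{holo-extn-phi}. Your write-up is essentially the standard proof one finds in such references (and presumably in Mili\v{c}i\'{c}'s notes as well). One small remark: implicitly you are assuming $z_0\in\Omega$, which is of course what is intended even though the paper's statement only writes $z_0\in\C$.
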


\vspace{.5in}

\noindent{\bf Singular Sturm Liouville operator:} In this subsection we give few well known preliminaries of Sturm-Liouville's operator. 

Let us define a singular Sturm Liouville operator on an interval $I=(a,b)$ by \begin{equation}\label{sturmliouville}My:=\frac{1}{w}\left(-(py')'+ qy\right),\end{equation} 
where $p,q,w:I\rightarrow\R,$ $p,w>0$ a.e. and $\frac{1}{p},q,w\in L^1_{\text{loc}}(I).$ The operator
$M$ is called {\em non-oscillatory} at $a$ if there exists a solution $My =\lambda y$ such that $y\neq 0$ in the $(a,a+\delta)$ for some $\delta>0$ and some $\lambda\in\R.$ Similar definition is for the other end point $b.$

For non oscillatory end points, Niessen and Zettl (in \cite{Niessen}) have completely characterised all the self adjoint extensions of the Sturm Liouville operator $M$ on $L^2((a,b),w(t)dt)$
 with explicit boundary conditions at $a$ and $b.$ 
 In  \cite[Theorem 4.2]{Niessen}, Niessen and Zettl obtain a Friedrich's extension of a Sturm Liouville operator $M$ as defined in \ref{sturmliouville}  on $(a,b)$ by defining the boundary conditions in terms of the principal solution at both end points. More precisely if $u_a$ and $u_b$ are any two principal solutions at $a$ and $b$ respectively, satisfying $Mu_j=\lambda_j u_j$ for $j=a,b$, $M$ is self adjoint extension on $L^2((a,b),w(t)dt)$ defined on the domain 
\[
\mathcal{D}=\left\{
y\in\mathcal{D}(M):\left[  y,u_a\right]  _{M}\left(
a\right)  =\left[  y,u_b\right]  _{M}\left(
b\right)  =0\right\}
\]
 The domain is independent of $u_j$ and $\lambda_j,j=a,b.$

  We say $u_a$ is a {\em principal solution} at $a$ if $u_a$ is non zero in a right neighbourhood of $a$ and for any other solution $y$ of $My=\lambda_a y$ on $(a,b),$ $u_a(t)=o(y(t))$ as $t\rightarrow a^+.$ It is known that a principle solution at $a$ of the equation $My=\lambda_a y$ is unique upto multiplicative constant. When $M$ is non-oscillatory at $a$ and $b,$ principal solutions do exist at $a$ and $b$ respectively.  
 
 If $M$ is a limit point case at $a$ i.e. only one solution of $Mu=\lambda u$ lies in $L^2
 (a,a+\epsilon)$ for some $\epsilon>0$  then we don't require any boundary condition at $a.$ This classification is independent of $\lambda.$ For further details see \cite{Niessen}.

\begin {thebibliography}{99}

\bibitem{Bensaid} Ben Said, Salem; Boussen, Asma; Sifi, Mohamed {\em $L^p$-Fourier analysis for certain differential-reflection operators.} Adv. Pure Appl. Math. 8 (2017), no. 1, 43--63.
\bibitem{Bertram} Bertram, Wolfgang {\em Ramanujan's master theorem and duality of symmetric spaces.}  J. Funct. Anal. 148 (1997), no. 1, 117--151.
\bibitem{Bloom} Bloom, Walter R.; Xu, Zeng Fu {\em The Hardy-Littlewood maximal function for Chébli-Trimèche hypergroups.} Applications of hypergroups and related measure algebras, 45--70, Contemp. Math., 183, Amer. Math. Soc., Providence, RI, 1995.
\bibitem{Brandolini-Gigante} Brandolini, Luca; Gigante, Giacomo {\em Equiconvergence theorems for Ch\'{e}bli-Trim\`{e}che hypergroups.} Ann. Sc. Norm. Super. Pisa Cl. Sci. (5) 8 (2009), no. 2, 211–265. 
\bibitem{Chebli} Chebli, Houcine {\em Sur un th\'{e}or\`{e}me de Paley-Wiener associ\'{e} \`{a} la d\'{e}composition spectrale d'un op\'{e}rateur de Sturm-Liouville sur $]0,\infty[.$ } J. Functional Analysis 17 (1974), 447--461.
\bibitem{DS} Dunford, N.; Schwartz, J. {\em Linear Operators Part II Spectral Thoery} Wiley New York, 1963.
\bibitem{Fitouhi} Fitouhi, A.; Hamza, M. M. {\em A uniform expansion for the eigenfunction of a singular second-order differential operator.} SIAM J. Math. Anal. 21 (1990), no. 6, 1619-1632. 
\bibitem{JG} Gigante, G.; Jotsaroop, K. {\em Equiconvergence of perturbed Jacobi expansions } Preprint.
\bibitem{G} Gilbert, John, E. {\em Maximal Theorems for some Orthogonal Series II} Journal of Math. Analysis and Applications 31, 349-368 (1970).
\bibitem{Hardy} Hardy, G. H.  {\em Ramanujan: Twelve Lectures on subjects suggested by his life and work.} Chelsea Publishing, New York (1959).
\bibitem{Helgason}Helgason, Sigurdur {\em Groups and geometric analysis. Integral geometry, invariant differential operators, and spherical functions.} Mathematical Surveys and Monographs, 83. American Mathematical Society, Providence, RI, 2000. 

\bibitem{Koornwinder} Koornwinder, T. H.  {\em Jacobi functions and analysis on noncompact semisimple Lie groups.} Special functions: group theoretical aspects and applications, 1--85, Math. Appl., Reidel, Dordrecht, 1984.
 \bibitem{Levin} Levin, B. Ja.; Ostrovski\u{\i}, \u{I}. V. {\em Small perturbations of the set of roots of sine-type functions.} Izv. Akad. Nauk SSSR Ser. Mat. 43 (1979), no. 1, 87--110, 238. 
 \bibitem{Milicic} Mili\v{c}i\'{c}, Dragan {\em Lectures on differential equations on complex domain}, https://www.math.utah.edu/~milicic/Eprints/de.pdf
 \bibitem{Niessen} Niessen, H.-D.; Zettl, A. {\em Singular Sturm-Liouville problems: the Friedrichs extension and comparison of eigenvalues.} Proc. London Math. Soc. (3) 64 (1992), no. 3, 545--578. 
 \bibitem{Olafsson-1} \'{O}lafsson, Gestur; Pasquale, Angela {\em Ramanujan's master theorem for Riemannian symmetric spaces.} J. Funct. Anal. 262 (2012), no. 11, 4851--4890.
 \bibitem{Olafsson-2} \'{O}lafsson, G.; Pasquale, A. {\em Ramanujan's master theorem for the hypergeometric Fourier transform associated with root systems.}  J. Fourier Anal. Appl. 19 (2013), no. 6, 1150--1183.
\bibitem{Olver} Olver, Frank W. J. {\em Asymptotics and special functions.}  AKP Classics. A K Peters, Ltd., Wellesley, MA, 1997.
\bibitem{Ray-Pusti} Pusti, Sanjoy; Ray Swagato K.   {\em Ramanujan's master theorem for radial sections of line bundle over noncompact symmetric spaces}  arXiv:1808.10165
\bibitem{Szego} Szeg\H{o}, G\'{a}bor {\em Orthogonal polynomials.} Fourth edition. American Mathematical Society, Colloquium Publications, Vol. XXIII. American Mathematical Society, Providence, R.I., 1975.
\bibitem{Yurko}Freiling, G. Yurko, V.{\em Boundary Value Problems with Singular Boundary conditions} International Journal of Mathematics and Mathematical Sciences, Vol. 2005, Issue 9, 1481-1495
\end{thebibliography}

\end{document}